\tikzset{myvert/.style={draw=black,circle,inner sep=3pt}}
\renewenvironment{abstract}
{\small\vspace{-1em}
\begin{center}
\bfseries\abstractname\vspace{-.5em}\vspace{0pt}
\end{center}
\list{}{
\setlength{\leftmargin}{0.6in}%
\setlength{\rightmargin}{\leftmargin}}%
\item\relax} {\endlist}
\renewcommand\qedsymbol{$\blacksquare$}
\def\ccqedsymbol{\ifmmode$\lrcorner$\else{\unskip\nobreak\hfil
\penalty50\hskip1em\null\nobreak\hfil$\lrcorner$
\parfillskip=0pt\finalhyphendemerits=0\endgraf}\fi}
\renewenvironment{proof}[1][] {\begin{mdframed} \emph{Proof\xspace#1.~}}
                 {\qedsymbol\end{mdframed}}
\declaretheorem[name=Theorem, numberwithin=section]{theorem}
\declaretheorem[name=Lemma, sibling=theorem]{lemma}
\declaretheorem[name=Proposition, sibling=theorem]{proposition}
\declaretheorem[name=Corollary, sibling=theorem]{corollary}
\crefname{conjecture}{Conjecture}{Conjectures}
\crefname{hypothesis}{Hypothesis}{Hypothesis}
\declaretheorem[name=Claim]{claim}
\crefname{claim}{Claim}{Claims}
\declaretheorem[name=Observation, sibling=theorem]{observation}
\def\cqedsymbol{\ifmmode$\lrcorner$\else{\unskip\nobreak\hfil
    \penalty50\hskip1em\null\nobreak\hfil$\lrcorner$
\parfillskip=0pt\finalhyphendemerits=0\endgraf}\fi}
\crefname{algocf}{Algorithm}{Algorithms}
\let\le\leqslant \let\ge\geqslant \let\leq\leqslant \let\geq\geqslant
\renewcommand\qedsymbol{$\blacksquare$}
\newcommand{\Kcg}{Kempe change\xspace}
\newcommand{\Kcgs}{Kempe changes\xspace}
\newcommand{\Kcn}{Kempe chain\xspace}
\DeclareMathOperator{\diam}{diam}
\DeclareMathOperator{\tw}{tw}
\DeclareMathOperator{\mad}{mad}
\title{Kempe changes in degenerate graphs}
\author[1]{Marthe Bonamy}
\author[1]{Vincent Delecroix}
\author[1]{Clément Legrand--Duchesne}
\affil[1]{LaBRI, CNRS, Université de Bordeaux, Bordeaux, France.}
\begin{document}

\maketitle

\begin{center}
  \begin{abstract}
We consider Kempe changes on the $k$-colorings of a graph on $n$ vertices. If
the graph is $(k-1)$-degenerate, then all its $k$-colorings are equivalent up to
Kempe changes. However, the sequence between two $k$-colorings that arises from
the proof may be exponential in the number of vertices. An intriguing open
question is whether it can be turned polynomial. We prove this to be possible
under the stronger assumption that the graph has treewidth at most
$k-1$. Namely, any two $k$-colorings are equivalent up to $O(k n^2)$ Kempe
changes. We investigate other restrictions (list coloring, bounded maximum
average degree, degree bounds). As a main result, we derive that given an
$n$-vertex graph with maximum degree $\Delta$, the $\Delta$-colorings are all
equivalent up to $O(n^2)$ Kempe changes, unless $\Delta=3$ and some connected
component is a $3$-prism.
  
  \end{abstract}
    {\bf Keywords:} reconfiguration, coloring, graph theory, treewidth
\end{center}

Given a $k$-colored graph, a \emph{\Kcn} is a connected component in the
subgraph induced by two given colors. A \Kcg consists in swapping the two colors
in a Kempe chain, thereby obtaining a new $k$-coloring of the graph. Two
$k$-colorings of a graph are \emph{Kempe equivalent} if one can be obtained from
the other through a series of Kempe changes. This elementary operation on the
$k$-colorings of a graph was introduced by Kempe in 1879, in an unsuccessful
attempt to prove the four color theorem~\cite{kempe1879geographical}.

The study of Kempe changes has a vast history, see e.g.~\cite{mohar2006kempe}
for a comprehensive overview or~\cite{bonamy2019conjecture} for a recent result
on general graphs. We refer the curious reader to the relevant chapter of a 2013
survey by Van Den Heuvel~\cite{van2013complexity}. Kempe equivalence falls
within the wider setting of combinatorial reconfiguration,
which~\cite{van2013complexity} is also an excellent introduction to. Beyond the
intrinsic interest of theoretical results, the study of Kempe changes is
motivated by practical applications in statistical physic and approximate
counting of colorings (see e.g.\ \cite{sokal2000personal,mohar2009new} for nice
overviews). Closer to graph theory, Kempe equivalence can be studied with a goal
of obtaining a random coloring by applying random walks and rapidly mixing
Markov chains, see e.g.~\cite{vigoda}.

Kempe changes were introduced as a mere tool, and are decisive in the proof of
Vizing's edge coloring theorem~\cite{vizing1964estimate}. However, the
equivalence class they define on the set of $k$-colorings is itself highly
interesting. In which cases is there a single equivalence class? In which cases
does every equivalence class contain a coloring that uses the minimum number of
colors? Vizing conjectured in 1965~\cite{vizing1968some} that the second
scenario should be true in every line graph, no matter the choice of $k$.

Our main interest is the study of the reconfiguration graph $R^k(G)$, whose
vertices are the $k$-colorings of $G$ and in which two colorings are adjacent if
and only if they differ by one Kempe change. We are interested in the following
questions: in which setting is the reconfiguration graph connected, that is, any
two $k$-colorings are Kempe equivalent ? When this is the case, can we bound the
length of the shortest sequence of Kempe changes between any two colorings,
i.e. the diameter of the reconfiguration graph ?

A graph is said to be $d$-degenerate if all its (non-empty) subgraphs contain a
vertex of degree at most $d$. Note that if $G$ has maximum degree $\Delta$ then
it is trivially $\Delta$-degenerate, and even $(\Delta-1)$-degenerate if $G$ is
not regular.  Las Vergnas and Meyniel~\cite{lasvergnas1981kempe} showed in 1981
that there exists a sequence of Kempe changes between any two $k$-colorings of a
$d$-degenerate graph $G$ when $k \ge d+1$. In other words, the corresponding
reconfiguration graph is connected. However, the sequence they provided may have
exponential length.

Reconfiguration restricted to trivial Kempe changes --- Kempe changes involving
only one vertex --- is another well-studied topic, known as vertex recoloring. The
lemma of Las Vergnas and Meyniel echoes a result proved by
Cereceda~\cite{cercedas2007mixing} in the setting of vertex recoloring: all the
$k$-colorings of a $d$-degenerate graph are equivalent up to trivial Kempe
changes when $k \ge d+2$. However, the sequence between two $k$-colorings that
arises from the corresponding proof may once again be exponential in the number
$n$ of vertices. Cereceda conjectured that there exists one of length
$O(n^2)$. In a breakthrough paper, Bousquet and Heinrich proved that there
exists a sequence of length $O(n^{d+3})$~\cite{bousquet2019polynomial}. However,
Cereceda's conjecture remains open, even for $d = 2$.

Obtaining similar bounds with regular Kempe changes on $d$-degenerate graphs
with one fewer color would have many consequences, as the lemma of Las Vergnas
and Meyniel is used as a base ground in several proofs. Unfortunately, the bound
of Bousquet and Heinrich~\cite{bousquet2019polynomial} does not extend to this
setting, and even a polynomial upper-bound on the number of changes would be
highly interesting. With this in mind, we prove three polynomial bounds on the
diameter of the reconfiguration graph in closely related settings.

\subsection*{Kempe equivalence of $\Delta$ colorings}
Any graph $G$ can be greedily colored with $(\Delta+1)$ colors, where $\Delta$
is the maximum degree of $G$.  Brooks' theorem states that if $G$ is not a
clique or an odd cycle, then $\Delta$ colors suffice. Many different proofs of
this theorem exist, see~\cite{carston2014brooks} for a collection of proofs of
Brooks' theorem using various techniques --- for that matter, some of them using
Kempe changes.

In the more restrictive setting of Brooks' theorem, Mohar~\cite{mohar2006kempe}
conjectured that all the $k$-colorings of a graph are Kempe equivalent for $k
\ge \Delta$. Note that the result of Las Vergnas and
Meyniel~\cite{lasvergnas1981kempe} settles the case of non-regular
graphs. Feghali \emph{et al.}~\cite{feghali2017kempe} proved that the conjecture
holds for all cubic graphs but the 3-prism (see Fig.~\ref{fig:3prism}). The
conjecture does not hold for the 3-prism, as can be seen through the lenses of
\emph{frozen} colourings. A colouring is frozen if any bichromatic subgraph is
connected, hence performing any Kempe change leaves the color partition
unchanged. Since the 3-prism admits two frozen $3$-colorings with different
color partitions, they are not Kempe equivalent. To our knowledge, this is the
only argument at our disposal to prove that a reconfiguration graph is
disconnected.

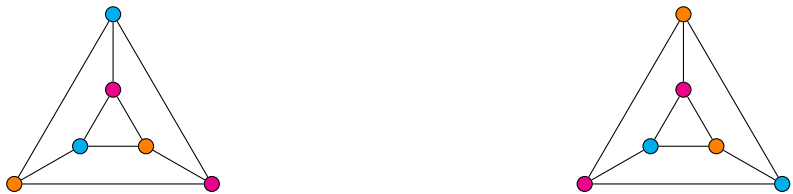
\begin{figure}[h]
    \begin{subfigure}[b]{.5\linewidth}
        \centering
        \begin{tikzpicture}
          \node[draw=black,fill=magenta,circle,inner sep=2pt] (a) at (90:.5cm) {};
          \node[draw=black,fill=cyan,circle,inner sep=2pt] (b) at (210:.5cm) {};
          \node[draw=black,fill=orange,circle,inner sep=2pt] (c) at (330:.5cm)
               {};

          \node[draw=black,fill=cyan,circle,inner sep=2pt] (A) at (90:1.5cm) {};
          \node[draw=black,fill=orange,circle,inner sep=2pt] (B) at (210:1.5cm) {};
          \node[draw=black,fill=magenta,circle,inner sep=2pt] (C) at
               (330:1.5cm) {};

          \draw (a) -- (A); \draw (b) -- (B); \draw (c) -- (C); \draw (a) -- (b)
          -- (c) -- (a); \draw (A) -- (B) -- (C) -- (A);
        \end{tikzpicture}
    \end{subfigure}%
    \begin{subfigure}[b]{.5\linewidth}
        \centering
        \begin{tikzpicture}
          \node[draw=black,fill=magenta,circle,inner sep=2pt] (a) at (90:.5cm) {};
          \node[draw=black,fill=cyan,circle,inner sep=2pt] (b) at (210:.5cm) {};
          \node[draw=black,fill=orange,circle,inner sep=2pt] (c) at (330:.5cm)
               {};

          \node[draw=black,fill=orange,circle,inner sep=2pt] (A) at (90:1.5cm)
               {};
          \node[draw=black,fill=magenta,circle,inner sep=2pt] (B) at
          (210:1.5cm) {};
          \node[draw=black,fill=cyan,circle,inner sep=2pt]
               (C) at (330:1.5cm) {};

          \draw (a) -- (A); \draw (b) -- (B); \draw (c) -- (C); \draw (a) -- (b)
          -- (c) -- (a); \draw (A) -- (B) -- (C) -- (A);
        \end{tikzpicture}
    \end{subfigure}
    \caption{Two frozen 3-colorings of the 3-prism}
    \label{fig:3prism}
\end{figure}

Bonamy \emph{et al.}~\cite{bonamy2019conjecture} later showed that the
conjecture also holds for $\Delta$-regular graphs with $\Delta \ge 4$. Both
paper heavily rely on the lemma of Las Vergnas and Meyniel and provide sequences
that possibly have exponential length.

In~\cref{sec:delta}, we give a polynomial upper bound on the diameter of the
reconfiguration graph in this setting.
\begin{restatable}{theorem}{thmdelta}\label{thm:delta}
  All the $k$-colorings of an $n$-vertex graph $G$ with maximum degree at most
  $\Delta \le k$ are equivalent up to $O(n^2)$ Kempe changes, unless $k=3$ and $G$
  is the $3$-prism.
\end{restatable}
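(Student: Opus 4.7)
My plan is to prove Theorem~\ref{thm:delta} by induction on $n$, splitting into the case where $G$ is $(k-1)$-degenerate (which covers $\Delta < k$, and all graphs that are not $\Delta$-regular) and the residual case where $G$ is $\Delta$-regular with $\Delta = k$ (and $G$ is not the 3-prism when $k=3$).

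For the $(k-1)$-degenerate case, I would pick a vertex $v$ of minimum degree, so $\deg(v) \le k-1$, and align its color with $\beta(v)$ using a single Kempe change on the chain of colors $\{\alpha(v),\beta(v)\}$ through $v$. I would then recurse on $G-v$. For the recursion to interact correctly with $v$, I would strengthen the statement to a list-coloring version on $G-v$: each neighbor $u \in N(v)$ would receive a reduced list (essentially $[k]\setminus\{\beta(v)\}$), giving $|L(u)| \ge \deg_{G-v}(u)$ by the choice of $v$. The point of the reduction is that any chain arising in the recursion either does not reach $N(v)$, or can be rerouted in $G$ by a constant number of preparatory Kempe changes so that it does not merge with $v$'s chain. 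Accumulated over $n$ peels, this bookkeeping contributes an $O(n)$ overhead per peel, for a total of $O(n^2)$.

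For the $\Delta$-regular case, I would rely on the existing Kempe-equivalence results of Feghali et al.~\cite{feghali2017kempe} (cubic case, excluding the 3-prism) and Bonamy et al.~\cite{bonamy2019conjecture} ($\Delta \ge 4$), which guarantee connectivity of the reconfiguration graph but only provide exponentially long sequences. The plan is to revisit these proofs to extract an $O(n)$-length initial segment that realigns the color of some vertex $v$ with $\beta(v)$; once $v$ is fixed, $G-v$ contains a vertex of degree $\Delta-1$, which drops us back into the previous case.

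The main obstacle is the careful lifting of Kempe changes from $G-v$ to $G$: naively, a chain in $G-v$ in colors $\{\beta(v), b\}$ may merge with $v$'s chain via a $b$-colored neighbor of $v$, so directly lifting would disturb $v$'s color. Controlling this via a list-coloring strengthening, together with $O(1)$ preparatory Kempe changes per problematic step to reroute chains around $v$, is the heart of the argument. The secondary obstacle is the polynomial refinement of the Brooks-regime arguments for the $\Delta$-regular case, where the known proofs do not immediately yield polynomial-length sequences.
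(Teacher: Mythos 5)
Your high-level split (a degenerate case plus a residual $\Delta$-regular case) matches the paper's, but the mechanism you propose for the degenerate case has a gap that is precisely the known obstruction. When you peel a minimum-degree vertex $v$ and lift the recursive sequence from $G-v$ back to $G$, the number of preparatory Kempe changes is not $O(n)$ per peel: \emph{every} Kempe change of the recursive sequence may produce a chain that would merge with $v$'s chain and hence require its own preparatory change. This gives the recurrence $T(n)\le 2T(n-1)+O(1)$, i.e.\ an exponential bound --- exactly what happens in the Las Vergnas--Meyniel argument that the theorem is meant to improve. Your list-coloring strengthening does not rescue this: the lists you assign to $N(v)$ satisfy only $|L(u)|\ge\deg_{G-v}(u)$, whereas the quantitative list statement one can actually prove (second item of Proposition~\ref{prop:blackbox}) needs $|L(u)|\ge\deg(u)+1$ for all but one vertex; with lists of size equal to the degree, reconfiguration can fail outright (frozen colorings). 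The paper avoids recursing on the whole reconfiguration problem: it fixes the target color of one vertex at a time, from the top of a $(d-1)$-degeneracy order in which all but the last vertex have degree at most $d$, and proves (Lemma~\ref{lem:claim}, via a potential argument on ``safe neighbors'') that each such fix costs $O(|N^-(v_j)|)$ iterations, each recoloring every smaller vertex at most once; summing over the vertices gives $O(n^2)$ with no recursive blow-up.

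The $\Delta$-regular case has a second gap. ``Realign one vertex $v$ with $\beta(v)$ and drop to $G-v$'' does not reduce to the previous case, because the subsequent Kempe changes computed in $G-v$ must still be executed in $G$, where their chains can pass through $v$ and undo its color --- the same lifting problem again. The paper's route is different: it picks a vertex $u$ and two non-adjacent neighbors $v,w$ of $u$ that are forced to share a color, and works in the identification $G_{v+w}$, which is $(\Delta-1)$-degenerate with all vertices but $v{+}w$ of degree less than $\Delta$, so Proposition~\ref{prop:blackbox} applies directly; this already yields $O(\Delta^2 n^2)$, and improving to $O(n^2)$ requires redoing the entire case analysis of Bonamy et al.\ and Feghali et al.\ (non-3-connected graphs, 3-connected graphs of small and of large diameter) with the quantitative proposition substituted for the Las Vergnas--Meyniel lemma. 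Your plan to ``extract an $O(n)$-length initial segment'' from the existing connectivity proofs underestimates this step: those proofs do not localize the work at a single vertex, and adapting them is the bulk of the paper's appendix.
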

The main idea of the proof is to improve the result of Las Vergnas and
Meyniel~\cite{lasvergnas1981kempe} by showing that there exists a sequence of
Kempe changes of length $O(n^2)$ between any two $k$-colorings of a
$d$-degenerate graph when $k \ge d+1$, under the additional assumption that all
the vertices but one have degree at most $d+1$ (see~\cref{sec:blackbox}).

\subsection*{Kempe equivalence in graphs of bounded $\mad$}
The maximum average degree of a graph $G$ is a measure of the sparsity of $G$,
defined as the maximum of the $$\mad(G) = \max_{\emptyset \neq H \subseteq G}
\frac{2|E(H)|}{|V(G)|}.$$ For all $d \ge 1$, if a graph has $\mad$ strictly less
than $d$, then it is $(d-1)$-degenerate: all its subgraph have average degree
less than $d$, so admit a vertex of degree at most $d-1$.

We prove that the $\mad$ of a graph, while related to its degeneracy, proves to
be easier to work with in this setting.

\begin{restatable}{theorem}{thmmad}\label{thm:mad}
  Let $G$ be a graph with $\mad(G) \le k-\varepsilon$. All the $k$-colorings of $G$
  are Kempe equivalent up to $O(Poly_\varepsilon(n))$ Kempe changes.
\end{restatable}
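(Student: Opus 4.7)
The plan is to prove the theorem by induction on $n$, using the $\mad$ condition to guarantee a linear number of low-degree vertices at each recursion level and relying on the black box from \cref{sec:blackbox} to handle local reconfiguration.

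First, I would establish a structural lemma: in any subgraph $H$ of $G$, at least $\lceil \varepsilon |V(H)| / k \rceil$ vertices have degree at most $k-1$ in $H$. This follows from a short counting argument: if $n_{\mathrm{high}}$ and $n_{\mathrm{low}}$ denote the numbers of vertices of degree $\geq k$ and $\leq k-1$ in $H$ respectively, then $k \cdot n_{\mathrm{high}} \leq 2|E(H)| \leq (k-\varepsilon)|V(H)|$, which gives $n_{\mathrm{low}} \geq (\varepsilon/k)|V(H)|$. Greedily extracting an independent set $I$ from these low-degree vertices (each blocking at most $k$ others) yields $|I| \geq \varepsilon n / k^2$.

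Next, the inductive reduction. For each $v \in I$, since $\deg_G(v) \leq k-1$, some color is absent from $N(v)$, giving slack for \Kcgs involving $v$. I would first align $c_1$ with $c_2$ on all of $I$ using a polynomial number of \Kcgs, potentially by invoking the black box of \cref{sec:blackbox} on carefully chosen subgraphs isolating each $v \in I$ together with its neighborhood. Then I would recurse on $G \setminus I$, which still satisfies $\mad \leq k-\varepsilon$. Since $|I| = \Omega_\varepsilon(n)$, the recursion has depth $O_\varepsilon(\log n)$, and polynomial work per level yields a total bound of $\mathrm{Poly}_\varepsilon(n)$.

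The main obstacle is the lifting step: the recursive \Kcgs, designed on $G \setminus I$, must be realizable on $G$ without altering the colors of $I$. A \Kcn in $G \setminus I$ may extend through some $v \in I$ in $G$, flipping its color unexpectedly. My plan to address this is to choose $I$ so that each $v \in I$ is locally rigid — either by ensuring that the color $c_2(v)$ does not appear on $v$'s neighborhood, so $v$ is never swept into a \Kcn of the relevant pair, or by restricting the recursion to color pairs that locally avoid $I$. The bulk of the technical work will be verifying that these restrictions remain compatible with a polynomial reconfiguration, which should follow from the $\mad$ bound passing to induced subgraphs.
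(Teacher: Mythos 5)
Your structural step is sound and is essentially how \cref{prop:loglayers} (Bousquet--Perarnau) is proved: iteratively peeling a linear fraction of vertices of degree at most $k-1$ yields the $O(\log n)$-depth decomposition that the paper's proof also starts from (the independent-set refinement buys you nothing and can be dropped). The genuine gap is in the lifting step, which you correctly flag as the main obstacle but do not resolve. The difficulty is not confined to the single set $I$: a Kempe change designed at recursion depth $i$ must be realised in $G$ without disturbing the part already fixed, and a vertex $u$ peeled at depth $j$ only has a guaranteed spare colour \emph{relative to the deeper part of the graph}, not in $G$ itself. So pushing $u$ out of the way of a chain requires a Kempe change on $u$ whose own chain may sweep through vertices peeled at depths $<j$, which must themselves be pushed aside first, and so on recursively. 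Your proposed fix (make each $v\in I$ ``locally rigid'' for the relevant colour pair) is a one-shot, single-layer remedy: it does not survive this cascade, and the recolouring that makes $v$ rigid for one pair can create new problematic vertices for another. The whole technical content of \cref{sec:mad} is devoted to exactly this point: \cref{alg:free} performs the corrective recolourings recursively along level-decreasing paths, and the total count is bounded by $n(2(k-1))^t$ via \cref{cl:order_seq,cl:nb_path} --- polynomial only because the number of level-decreasing paths between two vertices is at most $(k-1)^{t-1}$ and $t=O(\log_k n)$. Without an argument of this kind, your recursion has no polynomial bound on the number of corrective moves per lifted Kempe change, and naive bounds are exponential in $n$ (this is precisely the defect of the Las Vergnas--Meyniel argument the paper is trying to repair).

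A secondary issue is the order of operations: you fix $I$ first and then recurse on $G\setminus I$, so every subsequent change threatens the already-fixed outer layer. The paper fixes the \emph{deepest} layer first and works outward, using the list version of \cref{prop:blackbox} on each $G[V_i]$ with lists induced by the frozen deeper layers; each such change is then shielded (via \cref{prop:alg:recolor}) only from the not-yet-fixed layers below it. Reversing your order, and supplying a quantitative version of the freeing procedure, would bring your sketch in line with a complete proof.
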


We prove the above theorem by giving an upper bound on the number of Kempe
changes when lists are involved (see~\cref{sec:blackbox}), and by adapting ideas
developed by Bousquet and Perarnau in~\cite{bousquet2015sparse} in the setting
of single vertex recoloring (see~\cref{sec:mad}).

\subsection*{Kempe equivalence in bounded treewidth graphs}

Another way to strengthen the degeneracy assumption involves the treewidth of a
graph (the treewidth is a graph parameter that measures how close a graph is
from being a tree, see~\cref{sec:def} for a definition). A graph of treewidth
$k$ is $k$-degenerate, while there are $2$-degenerate graphs with arbitrarily
large treewidth. Bonamy and Bousquet~\cite{bonamy2013recoloring} confirmed
Cereceda's conjecture for graphs of treewidth $k$. In~\cref{sec:tw} we extend
this result to non-trivial Kempe changes with one fewer color.

\begin{theorem}\label{thm:tw}
  Let $\tw$ and $n$ be to integer. Any two $k$-colorings of an $n$-vertex graph
  $G$ with treewidth $\tw$ are equivalent up to $O(\tw n^2)$ Kempe changes, when
  $k \ge \tw+1$.
\end{theorem}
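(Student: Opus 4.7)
My plan is to fix a nice tree decomposition $(T, \{B_t\}_{t \in T})$ of $G$ of width at most $\tw$ (with $O(n)$ nodes), rooted at an arbitrary node, and transform any coloring $\alpha$ of $G$ into a canonical coloring $\gamma$ using $O(\tw \cdot n^2)$ Kempe changes; by symmetry and transitivity of Kempe equivalence, this suffices. The canonical coloring $\gamma$ is built by a pre-order traversal of $T$: each newly introduced vertex $v$ has at most $\tw$ already-introduced neighbors, all lying in $v$'s bag, so since $k \ge \tw + 1$ one may assign $\gamma(v)$ to some color avoiding them. This yields a proper $k$-coloring.

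To transform $\alpha$ into $\gamma$, I would traverse $T$ in post-order, maintaining the invariant that after processing node $t$, the current coloring coincides with $\gamma$ on $V_t \setminus B_t$, i.e.\ on the vertices of the subtree rooted at $t$ that no longer belong to the current bag. Introduce and join nodes preserve this invariant without any Kempe change. The substantive operation is at a forget node, where a vertex $w$ leaves the bag $B_{\text{child}} = B \cup \{w\}$: there one must arrange for $w$ to take color $\gamma(w)$ without disturbing the vertices of $V_{\text{child}} \setminus B_{\text{child}}$, which are already fixed at $\gamma$. If every such forget step can be handled using $O(\tw \cdot n)$ Kempe changes, the claimed $O(\tw \cdot n^2)$ total bound follows since $T$ has $O(n)$ forget nodes.

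The hard part is precisely this local lemma for the forget step. The obstacle is that a naive Kempe change swapping $\alpha(w)$ and $\gamma(w)$ on the chain of $w$ may propagate through $w$'s neighbors---possibly many of them---into the fixed part $V_{\text{child}} \setminus B_{\text{child}}$, thereby breaking the invariant; moreover, when $k$ is exactly $\tw + 1$ and $B_{\text{child}}$ already uses all $k$ colors, there is no spare color in which to temporarily park $w$. My plan is to exploit the separator property: $B_{\text{child}}$ separates $V_{\text{child}} \setminus B_{\text{child}}$ from the rest of $G$, so any Kempe chain linking $w$ to the fixed part must cross $B_{\text{child}}$. Through a bounded number of preparatory Kempe changes on $B_{\text{child}}$---whose bichromatic chains I allow to extend into the not-yet-processed part $V \setminus V_{\text{child}}$, where the invariant imposes no constraint---I would first rearrange the coloring inside the bag to ``seal off'' the boundary with the fixed part (so that the relevant chain of $w$ in colors $\alpha(w)$ and $\gamma(w)$ becomes confined), and then perform the decisive swap that sets $w$ to $\gamma(w)$. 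Making this rearrangement carry through with only $O(\tw \cdot n)$ Kempe changes, in the tight case $k = \tw+1$ where the bag has no slack color, is the technical heart of the argument.
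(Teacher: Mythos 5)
Your overall skeleton (a canonical coloring $\gamma$ plus a post-order invariant over a nice tree decomposition) is a reasonable plan, but the proof has a genuine gap: the forget-step lemma that you defer is not a technical detail to be filled in later --- it is the entire difficulty of the theorem --- and the sketch you give for it does not go through as stated. Two concrete problems. First, the ``preparatory Kempe changes on $B_{\mathrm{child}}$'' are themselves unsafe: although $B_{\mathrm{child}}$ separates the fixed part from the rest of $G$, the bag vertices are adjacent to the fixed part, so a bichromatic chain started at a bag vertex can immediately leak into $V_{\mathrm{child}}\setminus B_{\mathrm{child}}$ and destroy the invariant; you cannot simply ``allow the chains to extend into the not-yet-processed part,'' because nothing confines them there. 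Second, even the decisive swap is problematic: all neighbors of the forgotten vertex $w$ lie in $V_{\mathrm{child}}$, and the chain $K_{w,\gamma(w)}$ enters the fixed part as soon as $w$ has a neighbor there whose $\gamma$-color equals the current color of $w$; in the tight case $k=\tw+1$ with a rainbow bag there is no spare color in which to park $w$ first, and you give no argument that a configuration with a confined chain is always reachable, let alone within $O(\tw\cdot n)$ changes. Without a proof of this local lemma the argument is incomplete.

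For comparison, the paper sidesteps the bag-by-bag invariant entirely. It fixes a chordal supergraph $H\supseteq G$ with $\omega(H)\le\tw+1$ and shows (\cref{prop:tw}) that any $k$-coloring of $G$ can be transformed, with $O(\tw\cdot n^2)$ Kempe changes, into one that is also proper on $H$: the fill-in edges of $E(H)\setminus E(G)$ are added one at a time following a perfect elimination ordering, and chordality is used to prove that each corrective Kempe chain is a tree in which every vertex is smaller than its parent, hence never touches the larger, already-treated vertices. Once both colorings are proper on $H$, the known fact that chordal graphs have reconfiguration diameter at most $n$ (\cref{prop:chordal}) finishes the proof, each chordal Kempe change costing at most $n$ changes in $G$. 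This ordering-plus-chordality mechanism is exactly what your ``seal off the boundary'' step would need and currently lacks.
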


Additionally, the proof of Theorem~\ref{thm:tw} is constructive and yields an
algorithm to compute such a sequence in time $f(k)\cdot \textrm{Poly}(n)$. Given
a witness that the graph has treewidth at most $k$, the complexity drops to
$k\cdot \textrm{Poly}(n)$.

\section{Preliminaries}\label{sec:def}
A $k$-\emph{coloring} of a graph $G$ is a map $\alpha \colon V(G) \to [k]$ such
that for every edge $uv \in E(G)$, $\alpha(u) \neq \alpha (v)$. Given a
$k$-coloring $\alpha$ of a graph $G$ and $c \in [k]$, we denote
$K_{u,c}(\alpha,G)$ the connected component of the subgraph of $G$ induced by
the colors $c$ and $\alpha(u)$, that contains $u$. With a slight abuse of
notation, we will also use $K_{u,c}(\alpha,G)$ to denote the corresponding Kempe
change. We may drop the parameter $G$ when there is no ambiguity. When
describing algorithms, we will denote $\widetilde{\alpha}$ (or
$\widetilde{\beta}$, etc) the current coloring, obtained from the original
coloring $\alpha$ (or $\beta$, etc). We denote $\mathcal{C}^k(G)$ the set of
$k$-colorings of $G$ and $R^k(G)$ the reconfiguration graph, whose vertex set is
$\mathcal{C}^k(G)$ and in which two colorings are adjacent if they differ by one
Kempe change.

We denote $N(u)$ and $N[u]$ the open and closed neighborhoods of a vertex $u$,
respectively. Given an ordering $v_1 \prec \ldots \prec v_n$ of the vertices of
$G$, we denote $N^+(v_i) = \{ v_j \in N(v_i) | j > i\}$ and $N^-(v_i) = \{ v_j
\in N(v_i) | j < i\}$.

A graph $G$ is $d$-\emph{degenerate} if all its (non-empty) subgraphs contain a
vertex of degree at most $d$. This is equivalent to admitting a
\emph{$d$-degeneracy sequence}: an ordering of the vertices such that for all $v
\in V$, $N^+(v)$ is of size at most $d$. A $d$-degenerate graph is trivially
$(d+1)$-colorable, by doing a decreasing induction on the vertices.  We will
extensively use the lemma of Las Vergnas and Meyniel:
\begin{lemma}[\cite{lasvergnas1981kempe}]\label{lem:vergnas}
  All $k$-colorings of a $d$-degenerate graph are Kempe equivalent when $k \ge d
  +1$.
\end{lemma}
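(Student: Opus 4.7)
I would prove the lemma by induction on $n = |V(G)|$. The base case $n = 1$ is immediate, since any two colorings of a single isolated vertex differ by one trivial Kempe change. For the inductive step, fix a vertex $v$ of degree at most $d$, whose existence is guaranteed by $d$-degeneracy, and let $\alpha$ and $\beta$ be two arbitrary $k$-colorings of $G$.

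The first step of the plan is to reduce to the case where $\alpha$ and $\beta$ agree at $v$. This can be done with a single Kempe change: applying $K_{v,\beta(v)}(\alpha,G)$ swaps the colors $\alpha(v)$ and $\beta(v)$ in the bichromatic component containing $v$, producing a coloring $\alpha'$ with $\alpha'(v) = \beta(v) =: c_0$. Since $G - v$ is again $d$-degenerate and has $n-1$ vertices, the induction hypothesis yields a sequence of Kempe changes $K_1, \dots, K_m$ in $G - v$ transforming $\alpha'|_{G-v}$ into $\beta|_{G-v}$. The remaining challenge is to lift this sequence into $G$ in a way that leaves $v$'s color equal to $c_0$ at the end.

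The main obstacle lies precisely in this lifting. Each $K_j$ is a bichromatic swap for some color pair $\{c_1, c_2\}$: if $c_0 \notin \{c_1, c_2\}$, the chain in $G$ coincides with the chain in $G - v$ (because $v$ is not part of the bichromatic subgraph) and the same Kempe change can be performed verbatim in $G$. The delicate case is $c_0 \in \{c_1, c_2\}$: the chain in $G$ may strictly contain the chain in $G - v$, absorbing $v$ together with further components of $G - v$ that are connected to $v$ through neighbors of the opposite color. A naive lift would then corrupt $v$'s color and modify unrelated parts of the coloring. To handle this, the plan is to exploit the fact that $\deg(v) \leq d < k$, so that at every step some color is free at $N(v)$ and a trivial (single-vertex) Kempe change can move $v$ between colors without disturbing anything else. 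One would interleave such parking moves with the lifted sequence so that, each time a problematic $K_j$ must be executed, $v$ is first shifted to a safe color and eventually brought back to $c_0$. Verifying that this can always be organized with only $k \geq d + 1$ colors — in particular that, when neither slot outside $\{c_1, c_2\}$ is free at $N(v)$, one can nevertheless reproduce the intended $G - v$ change in $G$ via a short auxiliary sequence — is the combinatorial heart of the argument, and the step where I expect the main difficulty to lie.
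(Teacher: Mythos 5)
Your overall strategy is the same as the paper's (induction on $n$, delete a vertex $v$ of degree at most $d$, lift the recoloring sequence of $G-v$ into $G$, parking $v$ on a free color when the lift would go wrong), but the proposal stops exactly at the step that carries the whole proof: you explicitly leave open what to do when every color appears on $N[v]$ and no parking color is available. That case is not a residual difficulty to be ``organized''; it is resolved by a short dichotomy you never state. Since $|N[v]|\le d+1\le k$, if all $k$ colors appear on $N[v]$ then the neighbors of $v$ all receive distinct colors, so $v$ has \emph{at most one} neighbor of the color $c$ opposite to $\tilde\alpha(v)$ in the chain; then $v$ is merely a pendant of the $G-v$ chain, it cannot merge two bichromatic components of $G-v$, and the Kempe change can be applied verbatim in $G$ --- it may flip $v$'s color, but it acts on $G-v$ exactly as intended. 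Conversely, if $v$ has two neighbors of color $c$ (the only situation in which the lift could merge components), then at most $d$ colors appear on $N[v]$, so a parking color $c_v\notin\{c,\tilde\alpha(v)\}$ exists and a single trivial change on $v$ makes the lift safe. Without this dichotomy the argument is incomplete.

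A secondary issue: your opening move $K_{v,\beta(v)}(\alpha,G)$, and the ensuing obligation to keep returning $v$ to $c_0=\beta(v)$, is what makes the bookkeeping look hard. It is cleaner (and is what the paper does) to let $v$'s color drift freely during the lift and fix it only once at the very end: when the current coloring $\gamma$ satisfies $\gamma|_{G-v}=\beta|_{G-v}$, neither $\gamma(v)$ nor $\beta(v)$ appears on $N(v)$, so a single trivial Kempe change recolors $v$ to $\beta(v)$. Adopting this would remove the ``bring $v$ back'' constraint entirely and reduce your remaining task to the dichotomy above.
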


For completeness, we include a proof of it.
\begin{proof}
  We proceed by induction on the number of vertices. Let $G$ be a $d$-degenerate
  graph on $n$ vertices and $k \ge d+1$. Since $G$ is $d$-degenerate, there
  exists a vertex $v$ of degree at most $d$. Let $G' = G \setminus\{v\}$. Let
  $\alpha$ and $\beta$ be two $k$-colorings of $G$.  We claim that we can apply
  a series of Kempe changes on $\alpha$ in $G$ so as to obtain a coloring
  $\gamma$ whose restriction to $G'$ is equal to $\beta_{|G'}$. Assuming this
  claim, if $\gamma(v) \neq \beta(v)$, then neither $\gamma(v)$ nor $\beta(v)$
  appears in $\gamma(N(v))$ since $\gamma_{|G'} = \beta_{G'}$. As a result,
  performing the trivial Kempe change $K_{v, \beta(v)}(\gamma, G)$ yields
  $\beta$.

  It remains only to prove the above claim. By the induction hypothesis, all
  $k$-colorings of $G'$ are Kempe equivalent, so there exists a sequence $S$ of
  Kempe changes in $G'$ leading from $\alpha_{|G'}$ to $\beta_{|G'}$. Our goal
  is to extend $S$ to a series of Kempe changes in $G$. Each Kempe change
  $K_{u,c}(\widetilde{\alpha}, G')$ in $S$ is either applied directly or
  preceeded by an appropriate trivial Kempe change, as follows.  If
  $\widetilde{\alpha}(v) \not\in \{c,\widetilde{\alpha}(u)\}$ or no neighbor of
  $v$ belongs to the corresponding chain, we directly apply the same change in
  $G$.  Otherwise, the issue is that adding $v$ to $G'$ might result in merging
  two connected bichromatic components of $G'$.  Assume from now on without loss
  of generality that $\widetilde{\alpha}(v)=\widetilde{\alpha}(u)$.

  If $K_{u,c}(\widetilde{\alpha}, G)$ and $K_{u,c}(\widetilde{\alpha}, G')$
  agree except on $v$ (in particular, if $v$ has at most one neighbor of color
  $c$), we also apply directly the same Kempe change.  Indeed, the change will
  impact the color of $v$ in $G$ but the restriction to $G'$ behaves as
  expected.  Therefore, we can assume that $v$ has at least two neighbors
  colored $c$.  Since $v$ has degree at most $d$ and $k \ge d+1$, there exists a
  color $c_v$ that is not used in $N[v]$. After the trivial Kempe change $K_{v,
    c_v}(\widetilde\alpha, G)$, the vertex $v$ is colored $c_v$. We can now
  apply the desired Kempe change.

  This concludes the proof of the claim, hence of the lemma.
\end{proof}

A graph $H$ is \emph{chordal} if every induced cycle is a
triangle. Equivalently, there is an ordering of the vertices such that $N^+[v]$
is a clique for any vertex $v$. As a consequence, the chromatic number $\chi(H)$
of a chordal graph $H$ is equal to the size $\omega(H)$ of a largest clique in
$H$.  Note that we also have $\omega(H) = d +1$ where $d$ is the degeneracy of
$H$.

\begin{proposition}[\cite{bonamy2020diameter}]\label{prop:chordal}
  Given an $n$-vertex chordal graph $G$ and an integer $p$, any two
  $p$-colorings of $G$ are equivalent up to at most $n$ Kempe changes.
\end{proposition}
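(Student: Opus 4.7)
The plan is to proceed by induction on $n$. If $p < \omega(G)$ then $\mathcal{C}^p(G)$ is empty and there is nothing to prove, so I may assume $p \ge \omega(G)$, which by the discussion preceding the proposition means $p \ge d+1$ where $d$ is the degeneracy of $G$; in particular \cref{lem:vergnas} guarantees that $\alpha$ and $\beta$ are Kempe equivalent, and the goal is to sharpen this to $n$ changes. Pick a simplicial vertex $v$ of $G$ (the first vertex of a perfect elimination ordering). Since $N(v)$ is a clique of size at most $\omega(G)-1 \le p-1$, its vertices receive pairwise distinct colors, and at least one color of $[p]$ is always free at $v$ given any proper coloring of $G - v$.

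By the induction hypothesis applied to the chordal graph $G - v$, there is a sequence of at most $n-1$ Kempe changes in $G-v$ transforming $\alpha_{|G-v}$ into $\beta_{|G-v}$. The core of the proof is to show that each such Kempe change $K_{u,c}(\widetilde\alpha, G-v)$ can be realized by a single Kempe change in $G$ such that the restriction of the new coloring to $G-v$ equals the intermediate coloring produced in $G-v$ (the color of $v$ itself may change). The key observation is that, since $N(v)$ is a clique, $v$ has at most one neighbor of any given color, hence at most one edge into the bichromatic subgraph spanned by colors $c$ and $\widetilde\alpha(u)$. Consequently the chain $K_{u,c}(\widetilde\alpha, G)$ is either equal to $K_{u,c}(\widetilde\alpha, G-v)$ or equal to that chain with $v$ added; in the latter case $v$'s unique bichromatic neighbor $w$ lies in the chain, and $v$ and $w$ swap their (distinct) colors together, so no new conflict at $v$ is created. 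Either way the extended Kempe change is a valid Kempe change of $G$ producing a proper coloring whose restriction to $G-v$ is as desired.

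After extending each of the $n-1$ changes, the resulting coloring $\widetilde\alpha$ of $G$ satisfies $\widetilde\alpha_{|G-v} = \beta_{|G-v}$. If $\widetilde\alpha(v) = \beta(v)$ we are done; otherwise, since $\beta$ is proper and $\widetilde\alpha(N(v)) = \beta(N(v))$, the color $\beta(v)$ does not appear in $\widetilde\alpha(N(v))$, so the trivial Kempe change $K_{v, \beta(v)}(\widetilde\alpha, G)$ just recolors $v$ and yields $\beta$. The total count is at most $(n-1)+1 = n$ Kempe changes. The main technical obstacle is the chain-extension step: a careful case analysis depending on whether $\widetilde\alpha(v) \in \{c, \widetilde\alpha(u)\}$ and whether the unique neighbor of $v$ in the bichromatic subgraph belongs to $K_{u,c}(\widetilde\alpha, G-v)$. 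Simpliciality is what makes all cases work without requiring an extra preliminary Kempe change per step, which is what keeps the bound linear rather than doubling to $2n$.
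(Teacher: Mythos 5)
Your proof is correct. Note that the paper does not actually prove \cref{prop:chordal}; it imports the statement from \cite{bonamy2020diameter} and only remarks that the proof there is constructive and linear-time. Your argument is the natural one and, as far as one can tell, the same simplicial-vertex induction as in the cited reference: it is exactly the proof of \cref{lem:vergnas} given in the paper, sharpened by the observation that when $v$ is simplicial it has at most one neighbor of each color, so $v$ has degree at most $1$ in any bichromatic subgraph that contains it; hence adding $v$ back can never merge two chains, every chain of $G-v$ lifts to a single chain of $G$, and the extra preparatory trivial Kempe change that the Las Vergnas--Meyniel proof sometimes needs (the case ``$v$ has at least two neighbors colored $c$'') never occurs. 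That is precisely what keeps the count at $(n-1)+1=n$ rather than $2n$. One small wording slip: ``at most one edge into the bichromatic subgraph'' is literally false when $\alpha(v)\notin\{c,\alpha(u)\}$ (then $v$ may have one neighbor of each of the two colors), but in that case $v$ is not a vertex of the bichromatic subgraph and the chain is unaffected; your case analysis already covers this, so it is a matter of phrasing, not a gap.
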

The proof presented in~\cite{bonamy2020diameter} for~\cref{prop:chordal} is
constructive and the corresponding algorithm runs in linear time.

The \emph{treewidth} of a graph measures how much a graph ``looks like'' a
tree. Out of the many equivalent definitions of treewidth, we use the following:
a graph $G$ has treewidth at most $k$ if there exists a chordal graph $H$ such
that $G$ is a (not necessarily induced) subgraph of $H$, with $\omega(H) =
\chi(H) \le k+1$. For example, if $G$ is a tree, we note that $G$ is chordal and
$2$-colorable. By taking $H=G$, we derive that $G$ has treewidth at most $1$.
If $G$ has treewidth at most $k$, there is a chordal graph $H$ with
$\omega(H)=k+1$ that admits $G$ as a subgraph. Therefore, $H$ and thus $G$ are
$k$-degenerate.  The converse does not hold, as rectangular grids have
degeneracy 2 yet unbounded treewidth.

\section{Kempe recoloring with list assignments}\label{sec:blackbox}
Let $G$ be a graph and $L: V(G) \to \mathcal{P}(\mathbb{N})$ be a list
assignment.  A coloring $\alpha$ of $G$ is an \emph{$L$-coloring} if $\alpha(u)
\in L(u)$ for every vertex $u$.  Kempe changes can be defined as before,
although the resulting coloring may not be an $L$-coloring. Possible
obstructions will be defined later on.

We strengthen the lemma of Las Vergnas and Meyniel under two different
additional assumption:

\begin{proposition}\label{prop:blackbox}
    Let $G$ be a graph and let $v_1 \prec \ldots \prec v_n$ be an ordering of
    $V(G)$.
  \begin{enumerate}
      \item If the ordering yields a $(d-1)$-degeneracy sequence and
        $\deg(v_i)\leq d$ for every $i< n$, then any two $k$-colorings of $G$
        are Kempe equivalent up to $O(n^2)$ Kempe changes, for $k \ge d$. More
        precisely, the color of each vertex is changed a linear number of times.
  \item Given a list assignment $L$ of $G$, if $|L(v_i)| \ge \deg(v_i)+1$ for
    all $i < n$ then any two $L$-colorings of $G$ are equivalent up to $O(n)$
    Kempe changes.
  \end{enumerate}
\end{proposition}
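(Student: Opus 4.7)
The plan is to prove both parts by induction on $n$, strengthening the proof of \cref{lem:vergnas} by controlling how a sub-sequence of Kempe changes on $G' := G \setminus v_1$ lifts to $G$. In each inductive step, $G'$ inherits the relevant hypothesis in the ordering $v_2 \prec \ldots \prec v_n$ (since $\deg_{G'}(v_i) \le \deg_G(v_i)$ and a suffix of a degeneracy sequence is again a degeneracy sequence), so by induction there is a sequence $S'$ of Kempe changes in $G'$ (of length $O(n-1)$ for Part 2 and $O((n-1)^2)$ for Part 1) from $\alpha|_{G'}$ to $\beta|_{G'}$; the task is to lift $S'$ to $G$ and finally fix $v_1$ with one trivial Kempe change, as in the classical proof.

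For Part 2, I would first apply one trivial Kempe change to park $v_1$ at a color $c_1 \in L(v_1)$ not used by any of its neighbors in $\alpha$, which is possible because $|L(v_1)| \ge \deg(v_1) + 1$. Provided $c_1$ remains \emph{safe} throughout $S'$ (never arising as a swap color of a chain that would include $v_1$), every change of $S'$ lifts directly to $G$ without preparatory steps, yielding $T(n) = T(n-1) + O(1) = O(n)$. Maintaining the safety of $c_1$ is the crux: I would argue for it either by choosing $c_1$ outside the palette used by $S'$ (implicitly exercising a design choice in the inductive sub-sequence) or by showing that re-parking $v_1$ is required only a bounded number of times overall.

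For Part 1, note that $\deg(v_1) \le d-1$ (since $|N^+(v_1)| \le d-1$ and $|N^-(v_1)| = 0$) and that the hypothesis is preserved in $G'$. A preparatory Kempe change on $v_1$ is only needed for a \emph{hard} sub-sequence change, i.e., one with a swap color $c$ for which $v_1$ has at least two neighbors of color $c$. I would bound the number of hard changes by the total color churn in $N(v_1)$ during $S'$, which is $\deg(v_1) \cdot O(n-1) = O(n)$ by the per-vertex linear color-change bound that the induction simultaneously provides on $G'$. This yields the recurrence $T(n) \le T(n-1) + O(n) = O(n^2)$, and at the same time propagates the stronger claim that each vertex's color changes $O(n)$ times: $v_1$ is touched $O(n)$ times by the preparatory changes at the current level, while the remaining vertices inherit their bounds from the inductive call on $G'$.

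The main obstacle in both parts is this fine-grained control of the lift, which must avoid the multiplicative blowup of the naive $T(n) \le 2T(n-1) + O(1)$ recursion underlying \cref{lem:vergnas}. For Part 2 it amounts to a clean maintenance of $v_1$'s safe color; for Part 1 it leans on translating the hypothesis $\deg(v_i) \le d$ into a linear-per-level bound on hard Kempe changes, using the inductive per-vertex color-change counter as the pivotal bookkeeping device.
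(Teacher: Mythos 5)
Your proposal keeps the bottom-up structure of \cref{lem:vergnas} (peel the low-degree vertex $v_1$, lift a recursively obtained sequence from $G' = G \setminus \{v_1\}$, fix $v_1$ at the end), which is precisely the structure the paper has to abandon, and the amortization you propose to tame it does not close. For Part 1, you bound the number of preparatory (``hard'') changes at the current level by the color churn of $N(v_1)$ during $S'$, i.e.\ by $\sum_{a \in N^+(v_1)} g(a)$, where $g(a)$ denotes the number of recolorings of $a$; but $v_1$ itself then gets recolored up to $c\sum_{a \in N^+(v_1)} g(a) + O(1)$ times for some constant $c \ge 2$ (each hard change costs a parking move and possibly an in-chain recoloring of $v_1$), and since every vertex plays the role of $v_1$ at some level of the recursion, your per-vertex invariant obeys $g(v_j) \le c\sum_{a \in N^+(v_j)} g(a) + O(1)$. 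Already on a path ordered along its edges ($N^+(v_j)=\{v_{j+1}\}$, $d=2$) this recurrence certifies nothing better than $g(v_1) = 2^{O(n)}$: the ``linear per-vertex color-change bound'' you invoke as the bookkeeping device is not actually propagated by the induction — its constant degrades multiplicatively at each level, and you recover only the exponential bound of \cref{lem:vergnas}. Part 2 is in worse shape: with $|L(v_1)| = \deg(v_1)+1$ possibly equal to $2$, you cannot choose the parking color $c_1$ outside the palette used by $S'$, and absent any bound on re-parkings better than $|S'|$ you are back to $T(n) \le 2T(n-1) + O(1)$.

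The missing idea is to reverse the direction of the induction. The paper never lifts a full sequence from a smaller graph; it inducts on the \emph{largest} vertex on which $\alpha$ and $\beta$ disagree, picks a common color $c$ reachable from both sides, and recolors that single vertex in each coloring via \cref{alg:list}. The analysis of that algorithm (\cref{lem:claim}) guarantees that one such operation recolors the target once, leaves all larger vertices untouched, and recolors each smaller vertex at most $|N^-(v_j)| \le d$ times — a constant per invocation, charged to the back-degree of the target rather than to the length of a recursively produced sequence. Summing over the $n$ targets gives $O(n)$ recolorings per vertex and $O(n^2)$ changes overall, and $O(n)$ in the list setting once $c$ is chosen so that at most one smaller neighbor of the target carries it. Without some device of this kind, which makes the cost of disturbing smaller vertices independent of $n$ at each step, your recurrences cannot beat the exponential bound they set out to avoid.
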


Let $\alpha$ be a $L$-coloring of $G$, $v$ a vertex and a color $c \in L(v)
\setminus \alpha(N^+(v))$. Let $K = K_{v,c}(\alpha,G)$ We say that a vertex $u
\in K \setminus \{v\}$ is \emph{blocking} the Kempe change if $c$ or $\alpha(v)$
does not belong to $L(u)$. If a Kempe chain admits at least one blocking vertex,
then we say it is \emph{not feasible}.  To further analyze how possible
obstructions can appear, we introduce the following definitions.  We say that a
vertex $u \in K \setminus \{v\}$ is
\begin{itemize}
\item \emph{branching} if $u$ has degree at least 3 in the Kempe chain $K$,
\item \emph{problematic} if $u$ has at least two greater neighbors in the Kempe
  chain $K$.
\end{itemize}
We say that $u$ is a \emph{bad vertex} for $K$ if it is either branching,
problematic or blocking $K$. It is a \emph{first bad vertex} if there exists a
path from $v$ to $u$ in $K$ that contains no other bad vertex than $u$. Note
that a first bad vertex is necessarily smaller than $v$.

We use the same algorithm to prove both parts of~\cref{prop:blackbox}.
\begin{algorithm}[!ht]
  \SetAlgoLined \SetKwInOut{Input}{Input} \SetKwInOut{Output}{Output}
  \caption{}
  \label{alg:list}
  \Input{An $L$-coloring $\alpha$ of $G$, a vertex $v$, a color $c \in L(v)
    \setminus \alpha(N^+[v])$.}
  \Output{An $L$-coloring $\beta$ of $G$ which agrees with $\alpha$ on
    $\{w | w \succ v \}$, with $\beta(v) = c$.}
  Let $\beta = \alpha$\;
  \nl\label{alg:list:while}\While{$\exists u \in K_{v,c}(\beta, G) \setminus \{v\}$ that
    is bad}
        { \nl\label{alg:bad}Let $u$ be the greatest first bad vertex\;
          \nl\label{alg:list:if}\uIf{ there exists a color $c'$ in $L(u) \setminus \beta(N[u])$}
              { \tcp{In particular, this is the case if $u$ is blocking or
                  branching.}
                Let $\beta$ be the result of the trivial Kempe change $K_{u,c'}(\beta, G)$\;}
          \nl\label{alg:list:else}\uElse{
                Let $c'$ be a color in $L(u) \setminus \beta(N^+[u])$\;
                \tcp{The set is non-empty as $u$ is problematic.}
                Let $\beta$ be the result of~\cref{alg:list} on input $(\beta,
                 u, c')$\;}
        }
        Perform $K_{v,c}(\beta, G)$ in $\beta$\;
  Return $\beta$\;
\end{algorithm}

\subsection{Recoloring $(d-1)$-degenerate graphs with an additional assumption
  on the degree}

Assume that $G$ is $(d-1)$-degenerate, with corresponding degeneracy sequence
$v_1 \prec \dots \prec v_n$, and that all the vertices but possibly $v_n$ have
degree at most $d$. To prove the first point of ~\cref{prop:blackbox}, we prove
the following lemma:
\begin{lemma}\label{lem:claim}
    Let $\alpha$ be a $k$-coloring of $G$, let $1 \leq j \leq n$ and $1 \leq c
    \leq k$ such that $c \not\in \alpha(N^+(v_j))$.  \cref{alg:list} applied on
    $(\alpha,v_j,c)$ with list assignment $L(v_i)=\{1,\dots,k\}$ for every $i$
    returns a $k$-coloring $\beta$ such that $\beta(v_j) = c$ and $\forall \ell
    > j, \beta(v_\ell) = \alpha(v_\ell)$. Furthermore, the smaller vertices are
    recolored at most $|N^-(v_j)|$ times, $v_j$ is recolored at most once, and
    the bigger vertices are not recolored.
\end{lemma}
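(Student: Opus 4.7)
The plan is to prove \cref{lem:claim} by strong induction on $j$. The base case $j = 1$ is immediate: since $N^-(v_1) = \emptyset$, the hypothesis $c \notin \alpha(N^+(v_1)) = \alpha(N(v_1))$ forces $K_{v_1, c}(\alpha, G) = \{v_1\}$, so no bad vertex exists, the while loop is skipped, and the concluding Kempe change only flips $v_1$ to $c$.

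For the inductive step, I would analyze one iteration of the while loop. Let $\widetilde\beta$ denote the current coloring and let $u$ be the greatest first bad vertex of $K = K_{v_j, c}(\widetilde\beta, G)$, which as noted in the text satisfies $u \prec v_j$: indeed, the path in $K$ from $v_j$ to $u$ descends monotonically, since each interior non-bad vertex is not problematic and therefore has only one greater neighbor in $K$, namely its predecessor on the path. In the else-branch, $u$ is purely problematic, so it has two greater neighbors in $K$ sharing the color $\bar c \in \{c, \widetilde\beta(v_j)\} \setminus \{\widetilde\beta(u)\}$; combined with $|N^+(u)| \le d-1$ from the degeneracy sequence, this yields $|\widetilde\beta(N^+[u])| \le d-1 < k$, so the required color $c'$ exists. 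Any recursive call is invoked on $(u, c')$ with $u \prec v_j$ and $c' \notin \widetilde\beta(N^+(u))$, so the inductive hypothesis applies: the call terminates, recolors only vertices $\preceq u$, touches each vertex $\prec u$ at most $|N^-(u)|$ times, and recolors $u$ exactly once.

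The central step is to establish termination and the stated counts. For termination I would use as monovariant the index of the greatest first bad vertex and argue it strictly decreases each iteration. The crux is that (i) $u$ itself leaves $K_{v_j, c}$, because in both branches the chosen $c'$ differs from $\widetilde\beta(u)$ by construction and is forced to avoid $\bar c$, which appears on a neighbor of $u$ in $K$ and hence lies in $\widetilde\beta(N(u))$ (respectively $\widetilde\beta(N^+(u))$), so $c' \notin \{c, \widetilde\beta(v_j)\}$; and (ii) no bad vertex with index $\succeq u$ appears in the new chain, since the iteration modifies colors only on $\{w : w \preceq u\}$, leaving the $(c, \widetilde\beta(v_j))$-structure above $u$ unchanged. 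As for the counts, $v_j$ is untouched throughout the while loop and is recolored exactly once by the final Kempe change. The main obstacle, and in my view the delicate point of the whole argument, is to bound each smaller vertex's total number of recolorings by $|N^-(v_j)|$, since contributions come from trivial Kempe changes in the if-branch, recursive calls controlled by the inductive hypothesis, and the final $K_{v_j, c}$ change. My plan for this is to charge each iteration of the while loop to a neighbor $w_1 \in N^-(v_j)$ with $\widetilde\beta(w_1) = c$ that starts the descending path from $v_j$ to $u$ in $K$ — such a $w_1$ exists and lies in $N^-(v_j)$ precisely because $c \notin \widetilde\beta(N^+(v_j))$, which is preserved since no vertex of $N^+(v_j)$ is ever recolored — and then combine this charging with the per-vertex inductive bounds to collapse everything into the claimed $|N^-(v_j)|$ total.
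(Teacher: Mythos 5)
Your overall architecture (induction on $j$, monotone descent of the path from $v_j$ to the first bad vertex, strict decrease of the greatest first bad vertex across iterations, and the existence of $c'$ in the else-branch via the two identically colored greater neighbors of a problematic $u$) matches the paper's proof and is sound as far as it goes. The gap is exactly where you flag it: the bound of $|N^-(v_j)|$ on the total number of recolorings of smaller vertices is not established, and the two ingredients you propose do not combine to give it. First, charging each iteration to the neighbor $w_1 \in N^-(v_j)$ that starts the descending path to $u$ only yields a bound of $|N^-(v_j)|$ iterations if each such neighbor is charged at most once, and you never prove this injectivity. The paper does so by introducing a notion of \emph{safe} neighbor ($w \succ u$ and either $\widetilde\beta(w) \neq c$, or the component of $w$ in the bichromatic subgraph minus $v_j$ is a decreasing path ending above $u$) and proving that every iteration strictly increases the number of safe neighbors while safe neighbors remain safe. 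Without some such monotone invariant, a neighbor $w_1$ could in principle host the first bad vertex again in a later iteration, after recursive calls have rearranged the colors below it.

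Second, even granting at most $|N^-(v_j)|$ iterations, your application of the inductive hypothesis to the recursive call on $(u,c')$ gives that each vertex $\prec u$ is recolored at most $|N^-(u)|$ times \emph{per iteration}, which multiplies out to $|N^-(v_j)|\cdot\max_u |N^-(u)|$, not $|N^-(v_j)|$. The missing observation is that in the else-branch all $k$ colors appear in $\widetilde\beta(N[u])$ while $\deg(u)\le d\le k$, so the only repeated color in $N(u)$ is the one carried by $u$'s two greater neighbors in the chain; since $c'$ avoids $\widetilde\beta(N^+[u])$, at most one neighbor of $u$ is colored $c'$, hence the recursive chain $K_{u,c'}$ meets $N(u)$ in at most one vertex and the nested while loop executes at most once. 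This is what lets the paper conclude that each iteration of the outer loop recolors every smaller vertex at most once, which then sums to the claimed $|N^-(v_j)|$. Until you supply both the injectivity of your charging and this once-per-iteration bound, the quantitative part of \cref{lem:claim} --- which is the entire content of the lemma beyond correctness --- remains unproved.
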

\begin{proof}
  We proceed by induction on $j$. If $j=1$, then the algorithm does not enter
  the while loop, and recolors $v_j$ with a trivial Kempe change. Assume now
  that $1 < j < n$.~\cref{fig:example} shows an example of the execution
  of~\cref{alg:list}. Since $L$ is constant, there is no blocking vertex.

    Consider a $k$-coloring $\gamma$ of $G$ that agrees with $\alpha$ on all
    vertices greater or equal than $v_j$.  Assume that $K_{v_j,c}(\gamma, G)
    \setminus \{v_j\}$ contains a bad vertex, in other words the condition of
    the while loop is satisfied for $\gamma$. Let $u$ be the greatest bad vertex
    as in the line~\ref{alg:bad}.  For every neighbor $w \in N^-(v_j)$ such that
    $\gamma(w) = c$ we define
    $$C_\gamma(w) = K_{w, \gamma(v_j)}(\gamma, G \setminus\{v_j\}) \cup
    \{v_j\}.$$ We have $K_{v_j,c}(\gamma,G) = \bigcup_{w \in N^-(v_j) |
      \gamma(w) = c} C_\gamma(w)$.  A neighbor $w \in N^-(v_j)$ is a \emph{safe
    neighbor in $\gamma$} if $w \succ u$ and either $\gamma(w) \not= c$ or
    $\gamma(w) = c$ and $C_\gamma(w)$ is a decreasing path ending at a vertex
    greater than $u$. A neighbor $w \in N^-(v_j)$ is \emph{unsafe} if it is not
    safe.

\begin{claim}\label{cl:gammadecreases}
  An iteration of the while loop on $\gamma$ results in a coloring $\gamma'$
  with more safe neighbors. Moreover, either $K_{v_j,c}(\gamma',G)$ has no bad
  vertex or the greatest bad vertex $u'$ is smaller than the greatest bad vertex
  $u$ for $K_{v_j,c}(\gamma,G)$.
\end{claim}

\begin{proof}
    The Kempe changes in the while loops operate on vertices smaller or equal
    than $u$. Hence safe neighbors in $\gamma$ remain safe in $\gamma'$ if $u'
    \prec u$. We only need to show that $u' \prec u$ and that some unsafe
    neighbor becomes safe.
    
    We first prove that $u' \prec u$. In either case of the if condition, only
    the vertices smaller or equal than $u$ are modified. Furthermore $u$ is not
    bad anymore in $\gamma'$. Hence either $\gamma'$ has no bad vertex or the
    greatest first one is smaller than $u$.
    
    We now prove that some unsafe neighbor becomes safe.  Let $w \in N^-(v_j)$
    be on a shortest path $P$ from $v_j$ to $u$ in $K_{v_j,c}(\gamma,G)$. Then
    $w$ is an unsafe neighbor with $\gamma(w) = c$. We prove that $w$ is safe in
    $\gamma'$.

    If $u=w$ then $w$ is safe in $\gamma'$ since it is recolored with a color
    $c'$ different from $\{c,\gamma(v_j)\}$.  If $u \not= w$, then at the end of
    the while loop the vertex $u$ is recolored and $C_{\gamma'}(w) = P \setminus
    \{u\}$.  In particular, since $u' \prec u$, $w$ is safe in $\gamma'$.
    \end{proof}

Apply Claim~\ref{cl:gammadecreases} to $\alpha$, then to the coloring obtained
after each new iteration of the while loop (if any). If the resulting coloring
has only safe neighbors, then $K$ is a generalized star, rooted at $v_j$, in
which each branch is decreasing. In such a scenario, the while condition is not
satisfied. The Kempe change $K$ colors $v_j$ with $c$ and does not affect the
colors of the vertices bigger than $v_j$, as desired.

Since the number of safe neighbors increases at each step, the number of
iterations is at most the number of unsafe neighbors in $\alpha$, which is at
most $|N^-(v_j)|$. Note that bigger vertices are not recolored. It remains to
argue more carefully that smaller vertices are not recolored too many times. We
observe that in each iteration of the while loop, a smaller vertex is recolored
at most once. This is trivial if $u$ satisfies the if condition on
line~\ref{alg:list:if}.  If it does not, we observe that $u$ has at most one bad
neighbor, and apply the induction hypothesis to obtain that every vertex smaller
than $u$ is recolored at most once. The conclusion follows.
\end{proof}

\begin{figure}[!ht]
\begin{center}
  \begin{tikzpicture}
      \node[anchor=north] (graph1) at (0,0) {
      \begin{tikzpicture}
        \node[myvert,fill=cyan] (A) {};
        \node[myvert,fill=orange] (B) [left of=A] {};
        \node[myvert,fill=magenta] (C) [left of=B] {};
        \node[myvert,fill=orange] (D) [left of=C] {};
        \node[myvert,fill=cyan] (E) [left of=D] {};
        \node[myvert,fill=magenta] (F) [left of=E] {};
        \node[myvert,fill=orange] (G) [left of=F] {};
        \node[myvert,fill=cyan] (H) [left of=G] {};
        \node[myvert,fill=magenta] (I) [left of=H] {};
        \node[myvert,fill=blue] (J) [left of=I] {};
        \node[myvert,fill=cyan] (K) [left of=J] {};
        \node[myvert,fill=orange] (L) [left of=K] {};
        
        \node[below=.09] at (A) {$v_{12}$};
        \node[below=.09] at (C) {$v_{10}$};
        \node[below=.09] at (E) {$v_8$};
        \node[below=.09] at (F) {$v_7$};
        \node[below=.09] at (H) {$v_5$};
        \node[below=.09] at (I) {$v_4$};
        \node[below=.09] at (K) {$v_2$};
        
        \draw (A) -- (B) -- (C) -- (D) -- (E) -- (F) -- (G) -- (H);
        \draw (I) -- (J) -- (K) -- (L);

        \path
        (E) edge[bend left] (B)
        (H) edge[bend left] (D)
        (H) edge[bend right] (C)
        (K) edge[bend right] (F)
        (I) edge[bend left] (E)
        (I) edge[bend right=40] (B)
        (G) edge[bend left] (A)
        (J) edge[bend left] (F)
        (K) edge[bend left] (C)
        (L) edge[bend right] (C);
    \end{tikzpicture}};
    
    \node[anchor=north] (legend1) at (0,-3.2) {
    \begin{minipage}{0.8\textwidth}
Initial coloring $\alpha$. The aim is to recolor $v_8$ in pink. The bad
neighbors are $v_4$ and $v_7$. $K(v_4)$ consists only of $v_4$ and $v_8$, and is
a decreasing path. $K(v_7)$ consists of $v_2,v_5,v_7,v_8,v_{10}$. No vertex is
branching, but $v_2$ is problematic.  All colors appear in $N[v_2]$, so we apply
induction on $v_2$ with color orange.
\end{minipage}};
        
\node[anchor=north] (graph2) at (0,-5.5) {
      \begin{tikzpicture}
        \node[myvert,fill=cyan] (A) {};
        \node[myvert,fill=orange] (B) [left of=A] {};
        \node[myvert,fill=magenta] (C) [left of=B] {};
        \node[myvert,fill=orange] (D) [left of=C] {};
        \node[myvert,fill=cyan] (E) [left of=D] {};
        \node[myvert,fill=magenta] (F) [left of=E] {};
        \node[myvert,fill=orange] (G) [left of=F] {};
        \node[myvert,fill=cyan] (H) [left of=G] {};
        \node[myvert,fill=blue] (I) [left of=H] {};
        \node[myvert,fill=orange] (J) [left of=I] {};
        \node[myvert,fill=blue] (K) [left of=J] {};
        \node[myvert,fill=orange] (L) [left of=K] {};

        \node[below=.15] at (A) {$v_{12}$};
        \node[below=.15] at (C) {$v_{10}$};
        \node[below=.15] at (E) {$v_8$};
        \node[below=.15] at (F) {$v_7$};
        \node[below=.15] at (H) {$v_5$};
        \node[below=.15] at (I) {$v_4$};
        \node[below=.15] at (K) {$v_2$};
        
        \draw (A) -- (B) -- (C) -- (D) -- (E) -- (F) -- (G) -- (H);
        \draw (I) -- (J) -- (K) -- (L);

        \path
        (E) edge[bend left] (B)
        (H) edge[bend left] (D)
        (H) edge[bend right] (C)
        (K) edge[bend right] (F)
        (I) edge[bend left] (E)
        (I) edge[bend right=40] (B)
        (G) edge[bend left] (A)
        (J) edge[bend left] (F)
        (K) edge[bend left] (C)
        (L) edge[bend right] (C);
    \end{tikzpicture}};
    
\node[anchor=north] (legend2) at (0,-8.7) {
\begin{minipage}{0.8\textwidth}
      $K_{v_8, \text{pink}}(\widetilde\beta)$ is now a generalized
        star, centered at $v_8$, whose branches are decreasing paths.\label{fig:example:c}.
\end{minipage}
};

\node[anchor=north] (graph3) at (0,-9) {
      \begin{tikzpicture}
        \node[myvert,fill=cyan] (A) {};
        \node[myvert,fill=orange] (B) [left of=A] {};
        \node[myvert,fill=magenta] (C) [left of=B] {};
        \node[myvert,fill=orange] (D) [left of=C] {};
        \node[myvert,fill=magenta] (E) [left of=D] {};
        \node[myvert,fill=cyan] (F) [left of=E] {};
        \node[myvert,fill=orange] (G) [left of=F] {};
        \node[myvert,fill=cyan] (H) [left of=G] {};
        \node[myvert,fill=blue] (I) [left of=H] {};
        \node[myvert,fill=orange] (J) [left of=I] {};
        \node[myvert,fill=blue] (K) [left of=J] {};
        \node[myvert,fill=orange] (L) [left of=K] {};

        \node[below=.15] at (A) {$v_{12}$};
        \node[below=.15] at (C) {$v_{10}$};
        \node[below=.15] at (E) {$v_8$};
        \node[below=.15] at (F) {$v_7$};
        \node[below=.15] at (H) {$v_5$};
        \node[below=.15] at (I) {$v_4$};
        \node[below=.15] at (K) {$v_2$};

        \draw (A) -- (B) -- (C) -- (D) -- (E) -- (F) -- (G) -- (H);
        \draw (I) -- (J) -- (K) -- (L);

        \path
        (E) edge[bend left] (B)
        (H) edge[bend left] (D)
        (H) edge[bend right] (C)
        (K) edge[bend right] (F)
        (I) edge[bend left] (E)
        (I) edge[bend right=40] (B)
        (G) edge[bend left] (A)
        (J) edge[bend left] (F)
        (K) edge[bend left] (C)
        (L) edge[bend right] (C);
    \end{tikzpicture}
};

\node[anchor=north] (legend3) at (0,-12.2) {
\begin{minipage}{0.8\textwidth}
  The vertex $v_8$ has been recolored pink, after a total of 2 Kempe changes.
\end{minipage}
};
\end{tikzpicture}\end{center}

    \caption{Example of recoloring on a 3-degenerate graph, following the steps of the
      proof\label{fig:example} of \cref{lem:claim}.}
  \end{figure}

\subsection{List recoloring}
We now prove the second point of~\cref{prop:blackbox}.  Let $L$ be a list
assignment of $G$ such that $|L(v_i)| \ge \deg(v_i) +1$ for all $i < n$. We
prove the following claim:
\begin{claim}
  Let $\alpha$ be a $L$-coloring of $G$, let $1 \leq j \leq n$ and $c \in L(v)
  \setminus \alpha(N^+(v))$.  \cref{alg:list} applied on $(\alpha, v_j, c)$
  returns an $L$-coloring $\beta$ such that $\beta(v_j) = c$ and $\forall \ell >
  j$, $\beta(v_\ell) = \alpha(v_\ell)$. Furthermore, \cref{alg:list} performs at
  most $|N^-(v_j)|+1$ Kempe changes and each vertex is recolored at most once.
\end{claim}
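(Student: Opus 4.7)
The plan is to show that, under the hypothesis $|L(v_i)|\ge \deg(v_i)+1$ for all $i<n$, the else branch of \cref{alg:list} is never executed, so each iteration of the while loop consists of a single trivial Kempe change. I will then bound the number of iterations by $|N^-(v_j)|$ via a safe-neighbour argument analogous to \cref{cl:gammadecreases}.

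First I would verify that the if condition always holds. Let $u$ be a first bad vertex, so $u\prec v_j\le v_n$ and $|L(u)|\ge \deg(u)+1$. If $u$ is problematic or branching then at least two neighbours of $u$ in $K$ share a common colour (namely, the member of $\{c,\widetilde\beta(v_j)\}$ opposite to $\widetilde\beta(u)$), so $|\widetilde\beta(N[u])|\le \deg(u)<|L(u)|$. If $u$ is blocking, say $c\notin L(u)$, then $\widetilde\beta(u)=\widetilde\beta(v_j)$, every chain neighbour of $u$ is coloured $c$, and $c$ belongs to $\widetilde\beta(N[u])\setminus L(u)$, giving $|L(u)\cap \widetilde\beta(N[u])|\le \deg(u)<|L(u)|$. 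In every case a fresh colour $c'\in L(u)\setminus \widetilde\beta(N[u])$ exists, so line~\ref{alg:list:if} applies and the iteration performs exactly one trivial Kempe change on $u$.

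I next observe that $c'\notin\{c,\widetilde\beta(v_j)\}$: since $u\in K\setminus\{v_j\}$ has at least one chain neighbour, both colours of the chain appear in $\widetilde\beta(N[u])$, and $c'$ avoids them. Therefore after the Kempe change $u$ leaves the chain $K_{v_j,c}$, and since its colour remains $c'$ throughout the rest of the while loop, $u$ never again becomes a bad vertex. In particular no vertex is recoloured twice during the while loop.

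For the iteration count I adapt \cref{cl:gammadecreases}. Let $v_j=w_0,w_1,\ldots,w_k=u$ be a shortest $v_j$-to-$u$ path in $K$. Since $c\notin \widetilde\beta(N^+(v_j))$ we have $w_1\prec v_j$, and inductively each good internal vertex $w_i$ has exactly two chain neighbours (not branching) with at most one of them greater (not problematic), forcing the predecessor $w_{i-1}$ to be the greater one and hence $w_0\succ w_1\succ\cdots\succ w_k$. Moreover, both chain neighbours of each $w_i$ for $1\le i\le k-1$ lie on the path, so removing $u$ from the chain reduces the branch of $w:=w_1$ to the decreasing path $\{w_1,\ldots,w_{k-1}\}$ whose vertices are all above $u$. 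Hence $w\in N^-(v_j)$ becomes safe, and remains so since the greatest first bad vertex only decreases in subsequent iterations. Thus at most $|N^-(v_j)|$ iterations occur, each contributing one Kempe change, and the final $K_{v,c}$ contributes one more, for a total of at most $|N^-(v_j)|+1$ Kempe changes. Since every vertex recoloured in an iteration takes a colour outside $\{c,\widetilde\beta(v_j)\}$, such vertices lie outside the final chain and are disjoint from the vertices recoloured by the closing $K_{v,c}$; combined with the uniqueness inside the while loop, each vertex is recoloured at most once. The only (minor) subtlety is the path-monotonicity argument, which is routine given the goodness constraints on internal vertices.
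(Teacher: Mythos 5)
Your proposal is correct and follows essentially the same route as the paper: you first show that the counting on $|L(u)|$ versus $|\widetilde\beta(N[u])|$ (splitting into the problematic/branching and blocking cases) forces the if-branch on line~\ref{alg:list:if} to always apply, and then you re-run the safe-neighbour analysis of \cref{cl:gammadecreases} to get the $|N^-(v_j)|+1$ bound, which is exactly what the paper does by deferring to ``the same analysis as in the previous proof.'' You actually spell out more than the paper (the observation that $c'\notin\{c,\widetilde\beta(v_j)\}$ and the disjointness of the loop-recoloured vertices from the final chain); the only cosmetic slip is invoking a \emph{shortest} $v_j$--$u$ path rather than the path witnessing that $u$ is a \emph{first} bad vertex, which is what guarantees the internal vertices are good — a phrasing the paper itself shares.
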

\begin{proof}
  First note that the condition line~\ref{alg:list:if} is always
  satisfied. Indeed, because $u$ is different from $v_n$ we have $|L(u)| \geq
  \deg(u) + 1$ and
  \begin{itemize}
  \item if $u$ is problematic, $|\beta(N[u])| = 1 + |\beta(N(u))| \le \deg(u)$,
  \item if $u$ is blocking, $|\beta(N[u])| \le \deg(u) +1$ but at least one of
    $c$ and $\alpha(v_j)$ belongs to $\beta(N(u))$ but not to $L(u)$,
  \item if $u$ is branching, $|\beta(N[u])| = 1 + |\beta(N(u))| \le \deg(u) -1$.
  \end{itemize}

  Now the bound on the number of Kempe changes follow from the exact same
  analysis as in the previous proof.
\end{proof}

\subsection{Proof of~\cref{prop:blackbox}}

\begin{proof}[of~\cref{prop:blackbox}]
  Let $\alpha$ and $\beta$ be two colorings satisfying one of the two settings.
  Let $1 \leq j \leq n$ be the largest such that $\alpha(v_j) \neq
  \beta(v_j)$. We note $v = v_j$ and proceed by induction on $j$. Denote $L'(v)
  = L(v) \setminus \alpha(N^+(v))$, where $L(v)=\{1,\ldots,k\}$ in the first
  setting. Note that $L'(v)$ is not empty. For $c$ in $L'(v)$, denote $p_c =
  |\alpha^{-1}(c) \cap N^-(v)| + |\beta^{-1}(c) \cap N^-(v)|$. If $p_c \ge 2$
  for all $c \in L'(v)$, we have $2 \deg^-(v) \geq 2 |L'(v)|$. However,
  $|L'(v)|\geq |L(v)|-\deg^+(v)>\deg^-(v)$, hence $2 \deg^-(v) > 2 \deg^-(v)$, a
  contradiction.

  Let $c \in L'(v)$ be such that $p_c \leq 1$. By applying~\cref{alg:list} on
  $v$ for color $c$ in $\alpha$ (resp. $\beta$), we obtain a coloring $\alpha'$
  (resp. $\beta'$). We have $\alpha'(w) = \alpha(w)=\beta(w)=\beta'(w)$ for all
  $w \succ v$ and $\alpha'(v)=c=\beta'(v)$, hence $\alpha'(w) = \beta'(w)$ for
  all $w \succcurlyeq v$. Furthermore, at most $O(n)$ Kempe changes are
  performed in the first setting, and at most $O(1)$ in the second. We obtain
  the claimed bounds on the length of the sequence of Kempe changes.
\end{proof}

\section{Recoloring with graphs of bounded degree}\label{sec:delta}
We recall~\cref{thm:delta}: \thmdelta*

To prove~\cref{thm:delta}, we adapt the
proof~\cite{bonamy2019conjecture,feghali2017kempe} by handling separate cases
depending on whether $G$ is 3-connected. The problem is then reduced to
$(\Delta(G)-1)$-degenerate graphs, with all vertices but possibly one of degree
at most $\Delta(G)$. We proved in~\cref{sec:blackbox} that in that case,
$R^k(G)$ has diameter $O(n^2)$ whenever $k \ge \Delta(G)$
(see~\cref{prop:blackbox}).

\subsection{Naive bounds for Mohar's conjecture}

Let $G$ be a graph of maximum degree $\Delta$ other than the 3-prism and let $k \ge
\Delta$. Bonamy \emph{et. al.}~\cite{bonamy2019conjecture} and Feghali
\emph{et. al.}~\cite{feghali2017kempe} proved that $\mathcal{C}^k(G)$ forms a
single Kempe class. If $k >
\Delta$ or if $G$ is not regular,~\cref{prop:blackbox} states that $\diam(R^k(G))
= O(n^2)$. Assume that $k = \Delta$ and that $G$ is regular. Let $u$ be a
vertex of $G$. Given any $\Delta$-coloring of $G$, there are at least two
neighbors of $u$ that are colored alike. Denote $G_{v+w}$ the graph where two
non-adjacent neighbors $v$ and $w$ of $u$ are identified and
$\mathcal{C}^\Delta_{v,w}(G)$ the set of $k$-colorings of $G$ in which $v$ and
$w$ are colored alike. We have
$$ \mathcal{C}^\Delta(G) = \mathop{\bigcup_{v,w \in N(u)}}_{vw \notin E(G)}
\mathcal{C}_{v,w}^\Delta(G).$$

\begin{figure}[h!]
  \begin{subfigure}[b]{.5\linewidth}
    \centering
    \begin{tikzpicture}
      \node[draw=black,fill=orange,circle,inner sep=3pt]   (A) at (180:2cm) {};
      \node[left=.1] (u) at (180:2cm) {$u$};
      \node[draw=black,fill=magenta,circle,inner sep=3pt]  (B) at (120:2cm) {};
      \node[draw=black,fill=orange,circle,inner sep=3pt] (C) at (60:2cm) {};
      \node[draw=black,fill=cyan,circle,inner sep=3pt]   (D) at (0:2cm) {};
      \node[draw=black,fill=orange,circle,inner sep=3pt]  (E) at (300:2cm) {};
      \node[draw=black,fill=blue,circle,inner sep=3pt]  (F) at (240:2cm) {};

      \node[draw=black,fill=cyan,circle,inner sep=3pt]   (1) at (135:1cm) {};
      \node[right=.1]  (v) at (135:1cm) {$v$};
      \node[draw=black,fill=cyan,circle,inner sep=3pt]  (4) at (225:1cm) {};
      \node[right=.1]  (w) at (225:1cm) {$w$};
      \node[draw=black,fill=magenta,circle,inner sep=3pt] (3) at (315:1cm) {};
      \node[draw=black,fill=blue,circle,inner sep=3pt]   (2) at (45:1cm) {};
      \draw (A) -- (B) -- (C) -- (D) -- (E) -- (F) -- (A) -- (1) -- (C) -- (2)
      -- (B) -- (1) -- (3) -- (D) -- (2) -- (4) -- (E) -- (3) -- (F) -- (4) -- (A);
    \end{tikzpicture}
  \end{subfigure}%
  \begin{subfigure}[b]{.5\linewidth}
    \centering
    \begin{tikzpicture}
        \node[draw=black,fill=orange,circle,inner sep=3pt]   (A) at (180:2cm) {};
        \node[left=.1] (u) at (180:2cm) {$u$};
        \node[draw=black,fill=magenta,circle,inner sep=3pt]  (B) at (120:2cm) {};
        \node[draw=black,fill=orange,circle,inner sep=3pt] (C) at (60:2cm) {};
        \node[draw=black,fill=cyan,circle,inner sep=3pt]   (D) at (0:2cm) {};
        \node[draw=black,fill=orange,circle,inner sep=3pt]  (E) at (300:2cm) {};
        \node[draw=black,fill=blue,circle,inner sep=3pt]  (F) at (240:2cm) {};

        \node[draw=black,fill=cyan,circle,inner sep=3pt]   (1) at (180:1cm) {};
        \node[right=.1]  (v) at (180:1cm) {\ \  ${v+w}$};
        \node[draw=black,fill=magenta,circle,inner sep=3pt] (3) at (315:1cm) {};
        \node[draw=black,fill=blue,circle,inner sep=3pt]   (2) at (45:1cm) {};

        \draw (A) -- (B) -- (C) -- (D) -- (E) -- (F) -- (A) -- (1) -- (C) -- (2)
        -- (B) -- (1) -- (3) -- (D) -- (2) -- (1) -- (E) -- (3) -- (F) -- (1) -- (A);
      \end{tikzpicture}
  \end{subfigure}%
  \caption{The graph $G_{v+w}$ (right) obtained from the 4-regular graph $G$
    (right) is 3-degenerate with all its vertices but $v+w$ of degree less than
    4\label{fig:contraction}}
\end{figure}
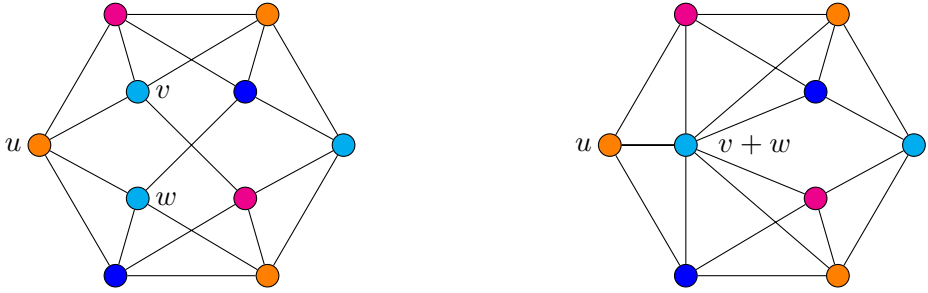

There is a one-to-one correspondence between the colorings of $G_{v+w}$ and the
colorings of $G$ in which $v$ and $w$ are colored alike. Performing a \Kcg in
$G_{v+w}$ corresponds to performing one or two \Kcgs in $G$, to maintain
identical colors on $v$ and $w$. As a result,
\begin{equation}\label{eq:identification}
  \diam(R^\Delta(G)) \ \le \ 2
  \mathop{\sum_{v,w \in N(u)}}_{vw \notin E(G)} \diam(R^\Delta(G_{v+w})).
\end{equation}
Note that $G_{v+w}$ is $(\Delta-1)$-degenerate and all its vertices but the
identification of $v$ and $w$ are of degree less than $\Delta$
(see~\cref{fig:contraction}). By~\cref{prop:blackbox}, $R^\Delta(G_{v+w})$ has
diameter at most $O(n^2)$. Together with~\cref{eq:identification}, this
proves that $\diam(R^\Delta(G)) = O(\Delta^2n^2)$. As a result, we obtain the
following theorem :

\begin{theorem}\label{thm:cubic_bound}
  Let $G$ be a graph of maximum degree $\Delta$, different from the 3-prism and
  $k \ge \Delta$. Then $R^k(G)$ has diameter $O(\Delta^2n^2)$.
\end{theorem}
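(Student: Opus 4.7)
The plan is to split the argument into two cases depending on whether $G$ is $\Delta$-regular. When $k > \Delta$ or $G$ has some vertex of degree strictly less than $\Delta$, I would place such a low-degree vertex last in a $(\Delta-1)$-degeneracy sequence produced by the standard greedy peeling. Every non-terminal vertex then has degree at most $\Delta \leq k$, and \cref{prop:blackbox}(1) applies with $d=\Delta$, yielding diameter $O(n^2)$ directly.

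In the remaining case, $G$ is $\Delta$-regular and $k = \Delta$. I would fix any vertex $u$ and observe that in any $\Delta$-coloring the $\Delta$ neighbors of $u$ must use colors distinct from the color of $u$, a palette of only $\Delta - 1$ colors; by pigeonhole, two neighbors $v, w \in N(u)$ share a color and are in particular non-adjacent. Writing $\mathcal{C}_{v,w}^\Delta(G)$ for the colorings giving $v$ and $w$ the same color, this yields the covering
$$\mathcal{C}^\Delta(G) \;=\; \bigcup_{\substack{v, w \in N(u) \\ vw \notin E(G)}} \mathcal{C}_{v,w}^\Delta(G),$$
indexed by at most $\binom{\Delta}{2} = O(\Delta^2)$ pairs. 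Next I would identify $\mathcal{C}_{v,w}^\Delta(G)$ with $\mathcal{C}^\Delta(G_{v+w})$ through the natural bijection, and argue that each Kempe change in $G_{v+w}$ lifts to at most two consecutive Kempe changes in $G$ that preserve the equality of colors on $v$ and $w$ (a second swap being needed exactly when $v$ and $w$ lie in distinct bichromatic components of $G$ sharing the same color pair). This produces the inequality
$$\diam(R^\Delta(G)) \;\leq\; 2 \sum_{v,w} \diam(R^\Delta(G_{v+w})).$$

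The remaining task is to bound $\diam(R^\Delta(G_{v+w}))$ for each pair. The structural observation is that the identification drops $u$'s degree to $\Delta - 1$ (since $u$ loses two neighbors and gains one), while every vertex other than $v+w$ retains degree at most $\Delta$ in $G_{v+w}$. Placing $v+w$ last in the ordering and launching the greedy peel at $u$ would yield a $(\Delta - 1)$-degeneracy sequence of $G_{v+w}$ whose non-terminal vertices all have degree at most $\Delta$, so that \cref{prop:blackbox}(1) applies with $d = \Delta$ and gives $\diam(R^\Delta(G_{v+w})) = O(n^2)$. Summing over the $O(\Delta^2)$ pairs and absorbing the factor of two then produces the claimed $O(\Delta^2 n^2)$ bound.

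The main obstacle I expect is this last structural step: verifying that the greedy peel launched at $u$ actually yields a valid $(\Delta-1)$-degeneracy sequence of $G_{v+w}$, which amounts to showing that no residual subgraph encountered during the peeling remains $\Delta$-regular. This is where the combination of $\Delta$-regularity of $G$ and the distinguished role of $u$ as a common neighbour of $v$ and $w$ in the quotient is crucial.
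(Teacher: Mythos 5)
Your proposal follows essentially the same route as the paper: dispatch the non-regular or $k>\Delta$ case via \cref{prop:blackbox}, then in the regular case cover $\mathcal{C}^\Delta(G)$ by the classes $\mathcal{C}^\Delta_{v,w}(G)$ over the $O(\Delta^2)$ non-adjacent pairs in $N(u)$, identify each with $\mathcal{C}^\Delta(G_{v+w})$ at a cost of at most two Kempe changes per simulated change, and apply \cref{prop:blackbox} to $G_{v+w}$ using that $u$ has degree $\Delta-1$ there. The degeneracy-ordering verification you flag as the main obstacle is exactly the step the paper also leaves at the level of an assertion, so there is nothing missing relative to the published argument.
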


To obtain~\cref{thm:delta}, one needs to improve this bound to $O(n^2)$. As
explained before, we only need to consider the case $k = \Delta$. Without loss
of generality, we may only consider the case $k \ge 4$, by including the
$\Delta^2$ obtained for $k = 3$ inside the $O$. The proof is very similar to the
one developed in~\cite{bonamy2019conjecture}, where we simply replace the use of
the Las Vergnas and Meyniel's lemma~\cite{lasvergnas1981kempe}
with~\cref{prop:blackbox} and count the number of Kempe changes performed
(see~\cref{app} for an adaptation of this proof).

\section{Recoloring graphs of bounded maximum average degree}\label{sec:mad}
A $t$-\emph{layering} of a graph $G$ is an ordered partition $V = V_1 \sqcup
\ldots \sqcup V_t$ of its vertices into $t$ subsets. We call \emph{layers} the
atoms $V_i$ of the partition.  Given a $t$-layering, we denote $G_i =
G[\bigcup_{j \ge i} V_j]$ for $1 \le i \le t$ and we define the \emph{level}
$\ell(v)$ of any vertex $v$ as the index $i$ of the subset $V_i$ it belongs
to. We say that a $t$-layering has \emph{degree} $k$ if $v$ has degree at most
$k$ in $G_{\ell(v)}$ for all $v \in V$.

We recall~\cref{thm:mad}:
\thmmad

The proof of~\cref{thm:mad} can be decomposed in two main steps. First, proving
that if $G$ has $\mad$ less than $k - \epsilon$, then it admits a $t$-layering
of degree $k-1$ with $t$ being logarithmic in $n$. This is achieved
by~\cref{prop:loglayers}, see~\cite{bousquet2015sparse}. Second, proving that if
$\alpha_i$ and $\beta_i$ are two colorings of $G[V_i]$ that differ by only one
Kempe change, then any extension $\alpha$ of $\alpha_i$ to $G$ differs by at
most $O(Poly_\varepsilon(n))$ kempe changes from an extension $\beta$ of
$\beta_i$ to $G$, see~\cref{prop:alg:recolor} applied with $t$ as
above. Finally, by the list coloring version of~\cref{prop:blackbox}, each layer
$G[V_i]$ can be be recolored one by one with $O(|V_i|)$ Kempe changes, starting
from $G[V_t]$ down to $G[V_1]$.

\begin{proposition}[\cite{bousquet2015sparse}]\label{prop:loglayers}
  For every $k \ge 1$ and every $\varepsilon > 0$, there exists a constant $C =
  C(k, \varepsilon) > 0$ such that every graph $G$ on $n$ vertices that
  satisfies $\mad(G) \le k - \varepsilon$ admits a $(C\log_kn)$-layering
  of degree $k-1$.
\end{proposition}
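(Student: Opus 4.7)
The plan is to build the layering greedily from the bottom up by repeatedly peeling off all vertices of low degree in the current subgraph. Set $G_1 := G$, and for $i \ge 1$ define
$$V_i := \{ v \in V(G_i) : \deg_{G_i}(v) \le k-1 \}, \qquad G_{i+1} := G_i \setminus V_i.$$
Stop at the first index $t$ for which $G_{t+1}$ is empty. By construction, every $v \in V_i$ satisfies $\deg_{G_i}(v) \le k-1$, and $G_i = G[V_i \cup V_{i+1} \cup \dots \cup V_t]$, so the partition $V = V_1 \sqcup \dots \sqcup V_t$ is automatically a $t$-layering of degree $k-1$. The only thing to control is the number of layers $t$.

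The key estimate is the following averaging observation. If $H$ is any non-empty subgraph of $G$, then $\mad(H) \le \mad(G) \le k - \varepsilon$, so $\sum_{v \in V(H)} \deg_H(v) \le (k-\varepsilon)|V(H)|$. If $m$ denotes the number of vertices of $H$ with degree at least $k$, then $km \le (k-\varepsilon)|V(H)|$, so the number of vertices of degree at most $k-1$ in $H$ is at least $\tfrac{\varepsilon}{k} |V(H)|$.

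Applying this to $H = G_i$ gives $|V_i| \ge \tfrac{\varepsilon}{k}|V(G_i)|$, hence $|V(G_{i+1})| \le (1 - \tfrac{\varepsilon}{k})|V(G_i)|$, and by induction $|V(G_{i+1})| \le (1 - \tfrac{\varepsilon}{k})^i n$. The process terminates as soon as this quantity drops below $1$, so
$$t \le \left\lceil \frac{\ln n}{-\ln(1-\varepsilon/k)} \right\rceil \le \left\lceil \frac{k}{\varepsilon} \ln n \right\rceil,$$
using $-\ln(1-x) \ge x$. Rewriting $\ln n = (\ln k)\log_k n$, this gives $t \le C \log_k n$ for a constant $C = C(k,\varepsilon) := 1 + \frac{k \ln k}{\varepsilon}$, as required.

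There is no real obstacle here beyond tracking the constant carefully: the argument is a standard peeling argument, and the only ingredient is the Markov-type bound on the proportion of low-degree vertices in a subgraph of bounded $\mad$. The inheritance of the $\mad$ bound by arbitrary subgraphs (which is immediate from the definition as a maximum) is what lets us iterate and obtain a logarithmic number of layers.
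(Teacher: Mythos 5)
Your proof is correct: the greedy peeling construction produces exactly a layering of degree $k-1$ in the paper's sense, and the Markov-type count (at least an $\varepsilon/k$ fraction of the vertices of any subgraph have degree at most $k-1$, since $\mad$ is inherited by subgraphs) gives the geometric decay and hence $t = O_{k,\varepsilon}(\log_k n)$ layers, modulo the harmless fact that the additive $+1$ from the ceiling must be absorbed into $C$ for $n \ge 2$. The paper itself offers no proof of this proposition --- it is imported verbatim from Bousquet and Perarnau --- and your argument is precisely the standard peeling proof from that reference, so it matches the intended justification.
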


Consider a graph $G$ and a $t$-layering of degree $k-1$ of it. Consider an
arbitrary total order $\prec$ on the vertices that satisfies:
$$\forall i < j, \forall (u,v) \in V_i\times V_j, u \prec v.$$ Note that every
vertex has at most $k-1$ greater neighbors. We say a sequence $S_1$ of vertices
is \emph{lexicographically smaller} than another sequence $S_2$ if
\begin{itemize}
\item $S_2$ is empty and $S_1$ is not, or
\item the first vertex of $S_1$ is smaller than the first vertex of $S_2$, or
\item $S_1=x\oplus S_1'$ and $S_2=x\oplus S_2'$ for some vertex $x$, and $S_1'$ is
  lexicographically smaller than $S_2'$.
\end{itemize}
We then denote $S_1 \prec_{lex} S_2$. Note that in particular, the empty
sequence is the biggest element for this order. A sequence of vertices $S= (v_1,
\ldots, v_p)$ is said to be \emph{level-decreasing} if the sequence of levels
$(\ell(v_1), \ldots, \ell(v_p))$ is decreasing. We will say that two colorings
\emph{agree} on a set of vertices $X$ if their restrictions to $X$ are equal.

Given a coloring $\alpha$, a vertex $v$ and a color $c$, we say in this
subsection that a vertex $u$ is \emph{problematic} for the pair $(v,c)$ if there
exists a level-decreasing path of vertices in $K_{v,c}(\alpha,G)$ going from $v$
to $u$, such that $u$ has at least two neighbors in
$K_{v,c}(\alpha,G_{\ell(v)})$. Note that if the pair $(v,c)$ has no problematic
vertices and $c$ is not used in $\alpha(N_{G_{\ell(v)}}[v])$, then the coloring
$\beta$ resulting from the Kempe change $K_{v,c}(\alpha,G)$ agrees with $\alpha$
on $V(G_{\ell(v)}) \setminus \{v\}$ and has $\beta(u) = c$.

\begin{proposition}\label{prop:alg:recolor}
  Let $\alpha$ be a $k$-coloring of $G$ and $K$ a Kempe chain in $G_i$ with
  $\alpha_{|G_i}$. Let $\gamma$ be the coloring of $G_i$ obtained from
  $\alpha_{|G_i}$ by performing the Kempe change on $K$ in $G_i$.
  
  There exists a $k$-coloring $\beta$ of $G$ within $n^2 \cdot (2k)^t$ Kempe
  changes of $\alpha$ such that $\beta_{|G_i} = \gamma$.
\end{proposition}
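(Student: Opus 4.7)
The plan is to proceed by induction on $i$. The base case $i=1$ is immediate: since $G_1=G$, the prescribed Kempe change on $K$ is already a Kempe change in $G$, yielding $\beta=\gamma$ in a single step.

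For the inductive step, fix $v\in K$ and let $c_1=\alpha(v)$ and $c_2$ be the two colors of $K$. The strategy is to perform a single Kempe change in $G$ whose restriction to $G_i$ matches the prescribed change on $K$; to make this possible, we first ``clear'' the lower layers via recursive calls. The central difficulty is that the chain $K_{v,c_2}(\alpha,G)$ may strictly extend $K$ on $G_i$: the problematic vertices (as defined just before the proposition) — vertices reachable from $v$ by a level-decreasing $\{c_1,c_2\}$-path and having at least two neighbors in the $G_{\ell(v)}$-chain — bridge $K$ with other $G_i$-vertices or introduce branches.

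For each problematic vertex $u\in V_j$ with $j<i$, I would select a color $c'\notin\{c_1,c_2\}\cup \alpha(N_{G_{\ell(u)}}(u))$; such $c'$ exists since $u$ has at most $k-1$ neighbors in $G_{\ell(u)}$ by the degree assumption. I then apply $K_{u,c'}(\cdot,G)$, viewing its restriction to $G_{\ell(u)}$ as a single Kempe change in the corresponding chain; the inductive hypothesis applied at level $\ell(u)\le i-1$ bounds its cost by $n^2(2k)^{i-1}$ Kempe changes in $G$. Once $u$ is safely recolored outside $\{c_1,c_2\}$, it is removed from the chain of $v$ and stays out for the remainder of this round. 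After all problematic vertices have been cleared, a final Kempe change $K_{v,c_2}(\cdot,G)$ produces the desired $\beta$.

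The main obstacle is controlling the number of recursive calls per level: a naive bound produces one recursion per problematic vertex (potentially $\Theta(n)$ per level), leading to a super-polynomial total of $n^{\Theta(\log n)}$. The key will be to show, by processing problematic vertices in a greedy top-down manner in the spirit of \cref{lem:claim}, that each level contributes only $O(2k)$ essentially different sub-tasks, the remainder being handled by a single $O(n^2)$-bounded pre-processing pass based on \cref{prop:blackbox}. This gives a recurrence of the form $T(i)\le 2k\cdot T(i-1)+O(n^2)$, whose solution $T(i)=O(n^2(2k)^i)$, evaluated at $i\le t$, yields the stated bound of $n^2(2k)^t$.
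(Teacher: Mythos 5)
Your setup coincides with the paper's: you recurse on problematic vertices down the levels, recolor each problematic $u$ with a color outside $\alpha(N_{G_{\ell(u)}}[u])$ (available because the two chain-neighbors of $u$ in $G_{\ell(u)}$ share a color, so $|\alpha(N_{G_{\ell(u)}}[u])|\le k-1$), and finish with the prescribed Kempe change once no problematic vertices remain. This is exactly Algorithms~\ref{alg:free} and~\ref{alg:recolor}. You also correctly identify the crux: a naive count of recursive calls gives $n^{\Theta(t)}$.

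However, you do not actually close that gap. Your proposed fix --- that ``each level contributes only $O(2k)$ essentially different sub-tasks'' after an $O(n^2)$ pre-processing pass, yielding the recurrence $T(i)\le 2k\cdot T(i-1)+O(n^2)$ --- is asserted without justification, and it does not match the combinatorics of the situation: a single pair $(v,c)$ can have $\Theta(n)$ problematic vertices, each lying on a different level-decreasing path out of $v$ and each genuinely requiring its own recursive call, so the branching factor of a single call is $n$, not $O(k)$, and \cref{prop:blackbox} offers no mechanism for collapsing these into $O(k)$ tasks. The paper's actual resolution is a global, amortized count rather than a per-level recurrence: each recursive call is labelled by the level-decreasing sequence of vertices that generated it; these labels strictly decrease in lexicographic order over the entire computation (\cref{cl:order_seq}), hence no label repeats; and the number of possible labels ending at a fixed vertex $w$ is at most $(k-1)^{\ell(v)-\ell(w)-1}$ level-decreasing paths (\cref{cl:nb_path}, using that the layering has degree $k-1$) times $2^{\ell(v)-\ell(w)-1}$ subsequences of each. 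Summing over $w$ bounds one top-level call by $n(2(k-1))^t$ Kempe changes, and the outer loop of \cref{alg:recolor} runs at most $n$ times, giving $n^2(2k)^t$. A related soft spot in your write-up is the claim that a cleared vertex ``stays out for the remainder of this round'': a later recursive call rooted at a problematic vertex of higher level can recolor $u$ back into $\{c_1,c_2\}$, and it is again the lexicographic-decrease argument, not local monotonicity, that guarantees termination and the bound.
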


\subsection{Freeing one color at a single vertex}

\begin{lemma}\label{lem:free}
  Let $\alpha$ be a $k$-coloring of $G$ and $v$ be a vertex of $G$. For any
  color $c$, ~\cref{alg:free} on input $(\alpha, (v), c)$ yields a $k$-coloring
  $\beta$ of $G$ within $n(2(k-1))^t$ Kempe changes of $\alpha$, such that
  $\alpha$ agrees with $\beta$ on $G_{\ell(v)}$, and the pair $(v,c)$ admits no
  problematic vertices for $\beta$.
\end{lemma}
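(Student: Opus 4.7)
The plan is to prove Lemma~\ref{lem:free} by induction on $\ell(v)$, exploiting the fact that any problematic vertex $u$ for $(v, c)$ must satisfy $\ell(u) < \ell(v)$: it lies strictly deeper on a level-decreasing path from $v$. Hence any recursive invocation of Algorithm~\ref{alg:free} operates at a strictly smaller level, guaranteeing termination. The base case $\ell(v) = 1$ is vacuous, as no vertex can be problematic; the algorithm returns $\beta = \alpha$ with zero Kempe changes.

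For the inductive step, I analyze the main loop of Algorithm~\ref{alg:free}. While the current coloring $\widetilde{\alpha}$ has a problematic vertex for $(v, c)$, the algorithm selects one such $u$ and a target color $c'$ --- intuitively, a color distinct from the current colors of the obstructing neighbors of $u$, so that recoloring $u$ to $c'$ extracts $u$ from the chain $K_{v,c}$. It then recursively invokes $\textsc{free}(\widetilde{\alpha}, S \oplus u, c')$ to ensure that the pair $(u, c')$ has no problematic vertices. By the induction hypothesis this call costs at most $n(2(k-1))^{\ell(u)-1}$ Kempe changes and preserves the coloring on $G_{\ell(u)} \supseteq G_{\ell(v)}$. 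Once it returns, the algorithm performs the Kempe change $K_{u, c'}$ on the current coloring; by the ``no problematic vertex'' property, this change affects only vertices of level at most $\ell(u) < \ell(v)$, and in particular recolors $u$ to $c'$. Now $u \notin K_{v, c}$, so $u$ is no longer problematic for $(v, c)$, and the loop makes progress.

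To bound the total number of Kempe changes, I would establish the recurrence $T(\ell) \le n + 2(k-1)\cdot T(\ell-1)$, where the additive $n$ accounts for the local Kempe changes at level $\ell$ --- at most one per iteration of the main loop, with the number of iterations bounded by $n$ via the strict decrease of the lexicographic order on level-decreasing paths from $v$ --- and $2(k-1)$ bounds the number of distinct recursive calls spawned (one per choice of color to free, with an extra factor of $2$ for the two colors involved in the bichromatic pair). Unrolling yields $T(t) = O(n(2(k-1))^{t-1})$, within the claimed bound. The main obstacle is precisely justifying the two parts of this recurrence: showing that the lexicographic potential on problematic paths genuinely decreases after each iteration --- even though the recursive subcall can introduce new problematic vertices, they must appear lexicographically below the one just resolved --- and showing that across all iterations at a fixed level only $O(k)$ distinct recursive calls are ever needed rather than one per iteration. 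This bookkeeping, tying the algorithm's structure to the invariants on Kempe chains and on $G_{\ell(u)}$, is where the algebraic form of the bound is earned.
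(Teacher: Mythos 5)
Your correctness argument (induction on $\ell(v)$, the vacuous base case $\ell(v)=1$, and the observation that the recursive call followed by the Kempe change $K_{u,c_u}$ touches only $u$ and vertices of level below $\ell(u)$, so the largest problematic vertex for $(v,c)$ strictly decreases) matches the paper's. The gap is in the cost analysis. The recurrence $T(\ell)\le n+2(k-1)\,T(\ell-1)$ presupposes that a single invocation spawns at most $2(k-1)$ recursive calls, but nothing in the algorithm enforces this: each iteration of the while loop spawns one recursive call, and the number of iterations is bounded only by the number of problematic vertices encountered, which can be linear in $n$, since each change $K_{u,c_u}$ may create fresh problematic vertices at levels below $\ell(u)$. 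Your stated source for the factor $2(k-1)$ --- ``one per choice of color to free, with an extra factor of $2$ for the two colors'' --- is not where that factor comes from: successive iterations recolor distinct \emph{vertices}, not merely distinct colors, so the honest per-call recurrence is $T(\ell)\le n\,(1+T(\ell-1))$, which unrolls to roughly $n^{t}$, far above the claimed bound.

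The actual bound comes from a global, amortized count over the entire recursion tree rather than a per-level branching bound. Each call $C$ carries a sequence $S_C$ that is, by construction, a subsequence (containing both endpoints) of a level-decreasing path from $v$ to its last vertex $u_C$. The paper shows (\cref{cl:order_seq}) that the sequences attached to successive calls strictly decrease in lexicographic order, so no sequence is ever repeated, and hence the total number of Kempe changes is at most the total number of admissible sequences. Because the layering has degree $k-1$, the number of level-decreasing paths from $v$ to a fixed vertex $w$ is at most $(k-1)^{i-1}$ with $i=\ell(v)-\ell(w)$ (\cref{cl:nb_path}), and each such path has at most $2^{i-1}$ subsequences containing $v$ and $w$; summing over all $w$ yields $n(2(k-1))^{t}$. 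You correctly identify this bookkeeping as the crux, but without it the algebraic form of the bound is asserted rather than proved.
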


\begin{algorithm}[h!]
  \SetAlgoLined \SetKwInOut{Input}{Input} \SetKwInOut{Output}{Output}
  \caption{}
  \label{alg:free}
  \Input{A $k$-coloring $\alpha$ of $G$, a level-decreasing
    sequence of vertices $S$, a color $c$.}
  \Output{A $k$-coloring $\beta$ of $G$ which agrees with $\alpha$ on
    $V(G_{\ell(v)})$ where $v$ is the last vertex of $S$. Moreover, the pair
    $(v,c)$ admits no problematic vertices with respect to $\beta$.}
  Let $v$ be the last vertex of $S$\;
  Let $\beta = \alpha$\;
  \While{the pair $(v,c)$ admits problematic vertices with respect to the
    coloring $\beta$}
        { Let $u$ be the largest problematic vertex for $(v,c)$ with respect to
          $\beta$\;
          \nl\label{alg:free:fresh_color}Let $c_u$ be a color in $[k] \setminus \beta(N_{G_{\ell(u)}}[u])$\;
          Let $\beta$ be the result of~\cref{alg:free} on input $(\beta, S
          \oplus u, c_u)$\;
          \nl\label{alg:free:change} Perform $K_{u,c_u}(\beta,G)$ in $\beta$\;
        }
  Return $\beta$\;
\end{algorithm}

\begin{proof}[of correctness of~\cref{alg:free}]
  We prove correctness of the algorithm by induction on $\ell(v)$. If $\ell(v) =
  1$, then $V(G_{\ell(v)}) = V$, so $(v,c)$ does not admit any problematic
  vertex. For $\ell(v) > 1$, at each iteration of the while loop,
  line~\ref{alg:free:fresh_color} is possible since $u$ has at least two
  neighbors in $G_{\ell(u)}$ that are colored identically. By induction
  hypothesis, the Kempe change line~\ref{alg:free:change} does not modify the
  color of the vertices in $G_{\ell(u)} \setminus \{u\}$. Thus, $\ell(v)$
  decreases at each iteration of the loop and at most $n$ calls are generated by
  the current call.
\end{proof}

To conclude the proof of~\cref{lem:free}, we need to bound the number of Kempe
changes performed by~\cref{alg:free}.

Given a call $C$ of~\cref{alg:free}, we will denote $S_C$ the sequence provided
in input, and $u_C$ its last vertex.
\begin{observation}\label{obs:path}
  If~\cref{alg:free} is called on $(\alpha, S, c)$ where $S$ is a
  level-decreasing sequence, and makes some recursive call $C$, then the
  sequence $S_C$ is longer than $S$ and is of the form $S \oplus u_C$. Moreover,
  $S_C$ is also a level-decreasing sequence by construction.
\end{observation}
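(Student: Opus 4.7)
The observation is a direct structural statement about when and how Algorithm~\ref{alg:free} recurses, so the proof will be a careful reading of the algorithm rather than any deeper argument. The plan is to trace what happens inside the \textbf{while} loop of a call on input $(\alpha, S, c)$, note that the recursive call $C$ is invoked on $(\beta, S \oplus u, c_u)$ where $u$ is chosen to be a problematic vertex for the pair $(v,c)$ with $v$ the last vertex of $S$. By construction this immediately gives $S_C = S \oplus u$, so $u_C = u$ and $|S_C| = |S|+1$, establishing both the ``longer'' and ``of the form $S\oplus u_C$'' parts of the statement.

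The only remaining point is the level-decreasing property of $S_C$. Since $S$ is level-decreasing ending at $v$, it suffices to prove $\ell(u) < \ell(v)$. I would argue this directly from the definition of a problematic vertex: a problematic $u$ for $(v,c)$ is connected to $v$ by a level-decreasing path in $K_{v,c}(\alpha, G)$, and (as for the notion of problematic used in \cref{sec:blackbox}) $u$ is distinct from $v$, so the path has at least one edge. Hence $\ell(u) < \ell(v)$, and concatenating $u$ at the end of the strictly level-decreasing $S$ preserves the property.

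There is no real obstacle here; the observation is asserted as being true ``by construction'', and the work is only to verify that the definition of problematic vertex given just before the algorithm really yields a strict drop in level, which follows because the path is strictly level-decreasing and $u \ne v$. With this in hand the conclusion is immediate and the observation is ready to be used in the subsequent bound on the number of recursive calls generated by \cref{alg:free}.
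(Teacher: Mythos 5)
Your proposal is correct and follows essentially the same route as the paper, which states this observation without proof as an immediate consequence of the recursive call being made on input $(\beta, S\oplus u, c_u)$. The one point that actually needs checking — that appending the problematic vertex $u$ preserves the level-decreasing property, i.e.\ that $\ell(u)<\ell(v)$ because $u\neq v$ lies at the end of a level-decreasing path from $v$ — is exactly the point you verify, and your verification is sound.
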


\begin{claim}[analog to \cite{bousquet2015sparse}]\label{cl:order_seq}
  If a call $D$ is initiated after a call $C$, then $S_D \prec_{lex} S_C$.
\end{claim}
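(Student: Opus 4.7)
The plan is to organize the calls to \cref{alg:free} as a recursion tree and distinguish two cases based on whether $D$ is a descendant of $C$ in that tree. In both cases $S_C$ and $S_D$ share a common prefix, so by the definition of $\prec_{lex}$ it suffices to compare the vertices appended immediately after that prefix.

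If $D$ is a descendant of $C$, iterating \cref{obs:path} along the chain of recursive calls from $C$ down to $D$ shows that $S_C$ is a strict prefix of $S_D$, and hence $S_D \prec_{lex} S_C$ by the first rule (the empty sequence is maximal). Otherwise, let $A$ be the least common ancestor of $C$ and $D$ in the recursion tree, and let $C^*, D^*$ be the two distinct children of $A$ whose subtrees contain $C$ and $D$. Since $D$ is initiated after $C$, the while loop of $A$ must have invoked $C^*$ in an earlier iteration than $D^*$; iterating \cref{obs:path} once more, $S_C$ and $S_D$ both start with the prefix $S_A$ and then continue with $u_{C^*}$ and $u_{D^*}$. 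Thus it suffices to show $u_{D^*} \prec u_{C^*}$.

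I plan to obtain this from a stronger per-iteration statement: in any single execution of the while loop, the vertices selected in successive iterations form a strictly decreasing sequence in $\prec$. Fix one iteration at $A$ selecting $u$, and let $\beta'$ denote the coloring right after the recursive call returns and the Kempe change $K_{u, c_u}(\beta, G)$ is performed. Two checks are needed. First, $u$ is no longer problematic for $(v_A, c_A)$: since $u$ lies in the chain (colored with one of $\{c_A, \beta(v_A)\}$) and has at least one chain-neighbor carrying the other color, both colors of the pair appear in $\beta(N_{G_{\ell(u)}}[u])$, which forces the color $c_u$ picked at line~\ref{alg:free:fresh_color} to lie outside $\{c_A, \beta(v_A)\}$; the Kempe change then removes $u$ from $K_{v_A, c_A}$. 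Second, no vertex $w \succ u$ becomes problematic. The total order groups vertices by layer, so $w \succ u$ gives $\ell(w) \ge \ell(u)$ and $w \in G_{\ell(u)}$; by inductive correctness of \cref{alg:free}, the recursive call preserves colors on $G_{\ell(u)}$, and the subsequent Kempe change preserves colors on $G_{\ell(u)} \setminus \{u\}$. Suppose for contradiction $w$ is problematic in $\beta'$, witnessed by a level-decreasing path $v_A = x_0, \dots, x_k = w$ inside $K_{v_A, c_A}(\beta', G)$. Monotonicity forces all $\ell(x_i) \ge \ell(u)$; none of the $x_i$ equals $u$ (which has just left the chain); hence each $x_i$ has the same color in $\beta$ as in $\beta'$, lying in $\{c_A, \beta(v_A)\}$. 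The same sequence is therefore a level-decreasing path in $K_{v_A, c_A}(\beta, G)$, and the neighborhood condition of the problematic definition, which involves only $G_{\ell(v_A)} \subseteq G_{\ell(u)}$, transfers from $\beta'$ back to $\beta$. So $w$ was already problematic in $\beta$, contradicting the maximality of $u$.

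The main obstacle is this second check: at first glance, the recursive call and the Kempe change can rearrange the chain $K_{v_A, c_A}$ arbitrarily at lower layers, which could in principle create new problematic vertices above $u$. The argument succeeds because the level-decreasing constraint in the definition of \emph{problematic} confines the relevant portion of the chain to layers $\ge \ell(u)$, where colors are preserved; this allows the path witnessing problematicity in $\beta'$ to be transferred back to $\beta$. Iterating the per-iteration conclusion through the intermediate iterations of $A$'s while loop yields $u_{D^*} \prec u_{C^*}$, completing the proof.
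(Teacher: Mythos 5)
Your proof is correct and follows essentially the same route as the paper's: organize the calls into a recursion tree, dispose of the ancestor case via \cref{obs:path} (a strict prefix is lexicographically larger), and otherwise pass to the least common ancestor and compare the vertices appended by its two relevant children. The only place you go beyond the paper is in actually proving that successive iterations of a single while loop select strictly $\prec$-decreasing problematic vertices — the paper simply asserts $u_{B_D} \prec u_{B_C}$, leaving this to the (rather terse) correctness analysis of \cref{alg:free} — and your argument for it, namely that colors on $G_{\ell(u)}\setminus\{u\}$ are preserved through the iteration while every level-decreasing witness path for a vertex $w\succ u$ lives entirely inside that region, is sound.
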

\begin{proof}
  If $D$ is called by $C$, then by~\cref{obs:path}, $S_D$ is of the form $S_C
  \oplus u_C$ thus $S_D \prec_{lex} S_C$. By applying this argument inductively,
  we have also have $S_D \prec_{lex} S_C$ if the call $D$ is generated by
  $C$.\\ Now, assume that $D$ is not generated by $C$. Denote $I$ the initial
  call of~\cref{alg:free} and recall that the recursive calls generated by $I$
  have a natural tree structure, rooted at $I$. There exists a unique sequence
  $\mathcal{S}_C = C_1, C_2, \dots C_{t_1}$ such that $C_i$ calls $C_{i+1}$ for
  each $i <t_1$, with $C_1 = I$ and $C_{t_1} = C$; and a unique sequence
  $\mathcal{S}_D = D_1, D_2, \dots D_{t_2}$ such that $D_i$ calls $D_{i+1}$ for
  each $i <t_2$, with $D_1 = I$ and $D_{t_2} = D$. Denote $B$ the last common
  ancestor of $C$ and $D$. Since neither $C$ nor $D$ is generated by the other,
  $B$ is distinct from $C$ and $D$ and thus is not the last element of the
  sequences $\mathcal{S}_C$ and $\mathcal{S}_D$. We have :
  \begin{itemize}
  \item $B_C$ and $B_D$ are both called by $B$ and $B_C$ is initiated before $B_D$,
  \item $B_C = C$ or $B_C$ generates $C$,
  \item $B_D = D$ or $B_D$ generates $D$.
  \end{itemize}
  It follows from~\cref{obs:path} that $S_{B_C} \prec_{lex} S_C$, so we just need to
  show that $S_D \prec_{lex} S_{B_C}$. By~\cref{obs:path} applied inductively,
  $S_D$ can be written as $S_{B_D} \oplus T_D$ with $S_{B_D} = S_B
  \oplus u_{B_D}$. Furthermore, $S_{B_C}$ can be written as $S_B \oplus
  u_{B_C}$. Since $B_C$ is called before $B_D$, we have $u_{B_D} \prec u_{B_C}$, so
  $S_D = S_B \oplus u_{B_D} \oplus T_D \prec_{lex} S_B \oplus u_{B_C} = S_{B_C}$.
\end{proof}

\begin{claim}[\cite{bousquet2015sparse}]\label{cl:nb_path} Given that the $t$-layering $V = V_1
  \sqcup \dots \sqcup V_t$ has degree at most $k-1$, the number of
  level-decreasing paths between two vertices $u$ and $w$ in different levels is
  at most $(k-1)^{i-1}$ where $i = |\ell(u) - \ell(v)|$.
\end{claim}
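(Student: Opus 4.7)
The plan is to prove the bound by induction on $i$, after reversing the paths: a level-decreasing path from $u$ to $w$ is the same object as a level-increasing path from $w$ to $u$, so I will instead count level-increasing paths starting at $w$ and ending at $u$. Without loss of generality $\ell(u) > \ell(w)$, so $i = \ell(u) - \ell(w) \geq 1$. The crucial structural input I would use is that, by the assumption on the layering, every vertex $v$ has degree at most $k-1$ in $G_{\ell(v)}$, hence at most $k-1$ neighbors at strictly higher level; I will call this set $U(v)$ the \emph{upper neighborhood} of $v$.

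For the base case $i=1$, a level-increasing path from $w$ to $u$ would have to reach $u$ in a single step since levels are integers and strictly increase, so the number of such paths is $\mathbb{1}[uw \in E] \leq 1 = (k-1)^{0}$. For the inductive step $i\geq 2$, I would decompose any level-increasing path from $w$ to $u$ according to its first edge, which goes from $w$ to some vertex $v \in U(w)$. Either $v=u$ (contributing $\mathbb{1}[uw \in E]$), or $v \neq u$ with $\ell(w) < \ell(v) < \ell(u)$, in which case the remainder of the path is a level-increasing path from $v$ to $u$ and I can apply the induction hypothesis with parameter $\ell(u) - \ell(v) \leq i-1$. Writing $P(u,w)$ for the count, this gives
\[
P(u,w) \;\leq\; \mathbb{1}[uw \in E] \;+\; \sum_{\substack{v \in U(w)\setminus\{u\}\\ \ell(v) < \ell(u)}} (k-1)^{\ell(u)-\ell(v)-1}.
\]

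The remaining step is a short arithmetic check, using $|U(w)| \leq k-1$ and $\ell(u)-\ell(v)-1 \leq i-2$ for every $v$ contributing to the sum. If $uw \notin E$ the sum has at most $k-1$ terms each bounded by $(k-1)^{i-2}$, giving $(k-1)^{i-1}$ directly. If $uw \in E$ then $u$ occupies one slot of $U(w)$, so the sum has at most $k-2$ terms and I get $1 + (k-2)(k-1)^{i-2}$, which is bounded by $(k-1)^{i-1}$ since $(k-1)^{i-1} - (k-2)(k-1)^{i-2} = (k-1)^{i-2} \geq 1$ for $k \geq 2$ and $i \geq 2$. I expect the only mild obstacle to be precisely this edge-case for the direct edge $uw$; everything else is a clean recursion driven by the uniform bound $|U(w)| \leq k-1$ coming from the degree assumption on the layering.
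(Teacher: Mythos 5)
Your proof is correct. The paper itself does not prove this claim --- it is imported verbatim from Bousquet and Perarnau \cite{bousquet2015sparse} --- so there is no in-paper argument to compare against; your induction on $i=\ell(u)-\ell(w)$, reversing the path and branching on the first edge out of $w$, is the natural (and presumably the original) argument, and it is complete. The two points worth being explicit about, both of which you handle correctly, are: (i) the degree hypothesis bounds the degree of $w$ in $G_{\ell(w)}$, i.e.\ among neighbors at level $\ge \ell(w)$, which a fortiori bounds your upper neighborhood $U(w)$ by $k-1$; and (ii) ``level-decreasing'' must be read as \emph{strictly} decreasing (as the paper's definition intends), since otherwise paths wandering within a level would invalidate both your base case and the bound itself --- your base case silently relies on this, so it deserves one sentence. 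The careful handling of the direct edge $uw$ via $1+(k-2)(k-1)^{i-2}\le (k-1)^{i-1}$ is a nicety the crude bound $\sum_{v\in U(w)}(k-1)^{i-2}\le (k-1)^{i-1}$ already absorbs (treating the edge $uw$ as the term $v=u$ with exponent $\ell(u)-\ell(u)-1$ replaced by $0$ is unnecessary if you just bound every term by $(k-1)^{i-2}$ and note $1\le (k-1)^{i-2}$ for $i\ge 2$, $k\ge 2$), but it is not wrong.
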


\begin{lemma}\label{lem:double}
  The number of Kempe changes performed by
  \cref{alg:free} on input $(\alpha, (v), c)$ is bounded by $n(2(k-1))^t$.
\end{lemma}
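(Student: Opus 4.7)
The plan is to bound the total number of Kempe changes performed by the recursion tree rooted at the initial call $(\alpha,(v),c)$ by bounding the total number of calls to~\cref{alg:free}. Each iteration of the while loop in any call $C$ performs exactly one Kempe change (on line~\ref{alg:free:change}) and spawns exactly one recursive call, so summing over all calls gives: the total number of Kempe changes equals the total number of calls minus one. By~\cref{cl:order_seq}, distinct calls have distinct input sequences $S_C$, so it suffices to upper-bound the number of sequences that can arise.

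From the algorithm and~\cref{obs:path}, every such sequence has the form $(v = v_0, v_1, \ldots, v_j)$ where each $v_{i+1}$ is problematic for $v_i$, and is therefore reachable from $v_i$ by a level-decreasing path of length at least $1$ in $G$. In particular the levels of the $v_i$'s are strictly decreasing, so $j+1 \le t$. The crucial step is to encode each such sequence by a pair (path, interior-stop subset): I would pick for each consecutive $(v_i,v_{i+1})$ an arbitrary level-decreasing path between them in $G$, and concatenate these sub-paths into a single walk $W$ from $v$ to $v_j$. Because consecutive $v_i$'s already have strictly decreasing levels and the interior vertices of each sub-path lie strictly between those of its endpoints, the levels of vertices of $W$ strictly decrease throughout, so $W$ is automatically a simple level-decreasing path in $G$; the original sequence is then recovered from $W$ by marking $\{v_1,\ldots,v_{j-1}\}$ as interior stops of $W$. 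Hence the number of valid sequences is at most the number of pairs (level-decreasing path in $G$ starting at $v$, subset of its interior vertices).

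Finally I would apply~\cref{cl:nb_path}: for every vertex $w$ with $i := \ell(v) - \ell(w) \ge 1$ there are at most $(k-1)^{i-1}$ level-decreasing paths from $v$ to $w$, and each such path has length at most $i$, and therefore at most $2^{i-1}$ interior-stop subsets, for at most $(2(k-1))^{i-1}$ pairs per $w$. Summing over $w$ and using $\sum_{\ell'}|V_{\ell'}| \le n$ together with $i \le t-1$ bounds the total number of valid sequences, and hence the number of Kempe changes, well within $n(2(k-1))^t$. The main technical care is to verify that strict level-decrease turns the concatenated walk $W$ into a simple path, which is what legitimizes the (path, interior-stops) encoding and makes the counting go through cleanly.
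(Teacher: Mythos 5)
Your argument is correct and is essentially the paper's own proof: you bound the number of Kempe changes by the number of distinct input sequences (using \cref{cl:order_seq}), observe that each such sequence is a subsequence of a level-decreasing path starting at $v$, and then count $(k-1)^{i-1}$ paths per endpoint via \cref{cl:nb_path} times $2^{i-1}$ choices of interior stops, summed over at most $n$ endpoints. Your explicit verification that the concatenated walk is a simple level-decreasing path, and that the (path, interior-stops) encoding is injective, just makes precise the step the paper states as ``the sequences considered in the recursive calls are subsequences of level-decreasing paths.''
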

\begin{proof}
  Observe that the sequences of vertices considered in the recursive calls are
  subsequences of level-decreasing paths. Thus, each level-decreasing path
  between $v$ and $w$ has at most $2^{\ell(v) - \ell(w) -1}$ subsequences that
  contain $v$ and $w$. As a result, each vertex $w$ with level $\ell(w)$ less
  than $\ell(v)$ is the origin of at most $2^{\ell(v) - \ell(w) -1}(k-1)^{\ell(v)
    -\ell(w) -1}$ Kempe changes.
\end{proof}

\subsection{Performing a Kempe change}
\begin{algorithm}[h!]
  \SetAlgoLined \SetKwInOut{Input}{Input} \SetKwInOut{Output}{Output}
  \caption{}
  \label{alg:recolor}
  \Input{A $k$-coloring $\alpha$ of $G$ with $k \ge \ell+1$, a level $i$, a
    Kempe chain $K$ of $G[V_i]i$.}
  \Output{A $k$-coloring $\beta$ of $G$, whose restriction to $G[V_i]$ is the
    coloring resulting from the Kempe change $K$ in $\alpha_{|G[V_i]}$.}
  Let $c_1,c_2$ be the colors involved in $K$\;
  Let $\beta = \alpha$\;
  \While{there exists a vertex $v \in K$ such that $(v,c_1)$ or $(v,c_2)$ admits problematic vertices in $\beta$}
        { Among all problematic vertices for some $(v,c_1)$ or $(v,c_2)$ with $v \in K$, let $u$ be the largest one\;
          Let $v \in K$ and $j \in \{1,2\}$ be such that $u$ is problematic fr $(v,c_j)$ in $\beta$\;
          Update $\beta$ with the result of~\cref{alg:free} on input $(\beta, (v), c_j)$.
        }
  Perform the Kempe change $K$ on $\beta$\;
  Return $\beta$\;
\end{algorithm}

\begin{proof}[of~\cref{prop:alg:recolor}]
  The coloring $\beta$ is obtained by applying~\cref{alg:recolor}. The
  correctness of \cref{alg:recolor} follows from the correctness of
  \cref{alg:free:change}, which holds by \cref{lem:free}.  After each iteration
  of the loop the largest possible vertex that is problematic for a pair of
  vertex and a color of $K$ decreases. Therefore, the loop is executed at most
  $n$ times and at the end, the Kempe chain $K$ is a proper Kempe chain of
  $G$. Combined with the cost analysis of \cref{alg:free:change} by
  \cref{lem:double}, this proves that~\cref{alg:recolor} performs at most
  $n^2(2(k-1))^i+1$ Kempe changes and is correct.
\end{proof}

\subsection{Combining the arguments}

We are now ready to prove~\cref{thm:mad}.

\begin{proof}[of~\cref{thm:mad}]
  Let $V_1 \sqcup \ldots V_t$ be a $t$-layering of $G$ of degree $(k-1)$, with $t =
  C\log_kn$ (see~\cref{prop:loglayers}).

  We claim that $G$ can be recolored layer by layer, starting from $G[V_t]$, with
  a polynomial number of Kempe changes. We prove this by decreasing induction on
  the level of the layer. Let $1 \le i \le t$, let $\alpha$ and $\beta$ be two
  $k$-colorings of $G$ and assume that $\alpha$ and $\beta$ agree on all
  vertices of level more than $i$. For $v \in V_i$, let $L(v) = [k] \setminus
  \alpha(N(v) \cap G_{i+1})$. We have for all $v \in V_i$, $|L(v)| \ge
  \deg_{G[V_i]}(v) +1$, so by applying the second case of~\cref{prop:blackbox},
  there exist a sequence $S$ of Kempe changes in $G[V_i]$ of size $O(|V_i|)$
  leading from $\alpha_{|V_i}$ to $\beta_{|V_i}$. Moreover, each of the Kempe
  change in $S$ is a proper Kempe change in $G_i$. By~\cref{prop:alg:recolor},
  each of these Kempe changes can be performed in $G$, after
  $O(Poly_\varepsilon(n))$ Kempe changes affecting the vertices of level less
  than $i$.
\end{proof}

\section{Recoloring bounded treewidth graphs}\label{sec:tw}
Let $G$ be an $n$-vertex graph of treewidth $\tw$.  Let $H$ be a chordal graph
such that $G$ is a (not necessarily induced) subgraph of $H$, with $\omega(H) =
\chi(H) \le \tw+1$ and $V(H)=V(G)$. Computing $H$ is equivalent to computing a
so-called tree decomposition of $G$, which can be done in time $f(\tw)\cdot
n$~\cite{bodlaender1996linear}.

Since $G$ and $H$ are defined on the same vertex set, there may be confusion
when discussing neighbourhoods and other notions. When useful, we write $G$ or
$H$ in index to specify. There is an ordering $v_1 \prec \ldots \prec v_n$ of
the vertices of $G$ such that for all $v \in V(G), N^+_H[v]$ induces a clique in
$H$. The ordering can be computed from $H$ in $O(n)$ using
Lex-BFS~\cite{rose1975elimination}.

The core of the proof lies in Proposition~\ref{prop:tw}: for $k \ge \tw +1$, any
$k$-coloring of $G$ is equivalent up to $O(\tw \cdot n^2)$ Kempe changes to a
$k$-coloring of $G$ that yields a $k$-coloring of $H$.

\begin{proof}[of~\cref{thm:tw} assuming~\cref{prop:tw}]
Let $\alpha$ and $\beta$ be two $k$-colorings of $G$.  By
Proposition~\ref{prop:tw}, there exists a $k$-coloring $\alpha'$
(resp. $\beta'$) that is equivalent to $\alpha$ (resp. $\beta$) up to $O(\tw
\cdot n^2)$ \Kcgs. Additionally, both $\alpha'$ and $\beta'$ yield $k$-colorings
of $H$. Since $H$ is chordal, by Proposition~\ref{prop:chordal}, there exists a
sequence of at most $n$ \Kcgs in $H$ from $\alpha'$ to $\beta'$. Each of these
\Kcgs in $H$ can be simulated by at most $n$ \Kcgs in $G$, which results in a
sequence of length $O(\tw \cdot n^2)$ between $\alpha$ and $\beta$.
\end{proof}

\begin{proposition}
  \label{prop:tw}
  Given any $k$-coloring $\alpha$ of $G$ with $k \ge \tw +1$, there exists a
  $k$-coloring $\alpha'$ of $G$ that is equivalent to $\alpha$ up to $O(\tw
  \cdot n^2)$ \Kcgs and such that $\alpha'(u) \neq \alpha'(v)$ for all $uv \in
  E(H)$. The algorithm \ref{alg:tw} computes $\beta$ and a sequence of \Kcgs
  leading to it.
\end{proposition}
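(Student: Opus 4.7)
The plan is to process the vertices in the reverse order $v_n, v_{n-1}, \ldots, v_1$ of the perfect elimination ordering of $H$, maintaining after each step $j$ the invariant that the restriction of the current coloring $\widetilde{\alpha}$ to $\{v_j, v_{j+1}, \ldots, v_n\}$ is a proper $H$-coloring (not just a proper $G$-coloring). When the outer loop reaches $j=1$, the coloring is simultaneously a proper $k$-coloring of $G$ and of $H$, which is the desired $\alpha'$.

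At step $j$, the set $N_H^+[v_j]$ is a clique in $H$ of size at most $\tw+1$, and by the invariant its higher-index vertices receive pairwise distinct colors. Hence $|\widetilde\alpha(N_H^+(v_j))| \le \tw$ and a target color $c \in [k]\setminus \widetilde\alpha(N_H^+(v_j))$ always exists. If $\widetilde\alpha(v_j)$ already avoids $\widetilde\alpha(N_H^+(v_j))$ there is nothing to do; otherwise I pick such a $c$ and aim to recolor $v_j$ to $c$ via Kempe changes in $G$ that leave every $v_i$ with $i > j$ untouched.

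The main obstacle is that the Kempe chain $K_{v_j,c}(\widetilde{\alpha}, G)$ may thread through low-index vertices back into the already-frozen high-index region, and performing it naively would break the invariant. I would adapt \cref{alg:list} to handle this: declare a vertex in the chain to be \emph{bad} if its index exceeds $j$, locate the greatest-index such vertex $v_m$ together with a gateway edge $v_\ell v_m$ on the chain path with $\ell < j$, and recursively recolor $v_\ell$ to a fresh color $c' \in [k]\setminus \widetilde\alpha(N_H^+(v_\ell))$ so as to sever the chain from $v_m$. Such a $c'$ exists by the same clique-size argument as for $v_j$, and each recursive call strictly decreases the greatest bad vertex remaining in the ambient chain, so the procedure terminates; once no bad vertex remains, the Kempe change $K_{v_j, c}(\widetilde\alpha, G)$ is safe and completes step $j$.

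The hardest part will be the cost analysis, along the lines of \cref{lem:claim}. I would use the potential ``largest-index bad vertex remaining in the current chain'' to show that each outer step $j$ triggers $O(n)$ inner iterations, each of which performs $O(\tw)$ Kempe changes to sever up to $\tw$ gateways before recomputing the chain. This gives $O(\tw \cdot n)$ Kempe changes per outer step and $O(\tw \cdot n^2)$ in total, matching the claim. A final verification that a Kempe chain in $G$ can be located in polynomial time then yields the $f(k)\cdot \textrm{Poly}(n)$ algorithmic complexity announced right after \cref{thm:tw}.
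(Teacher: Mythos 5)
Your high-level plan (fix the vertices of $H$-large index one at a time, recoloring $v_j$ with a color outside $\widetilde\alpha(N_H^+(v_j))$) is a reasonable alternative to the paper's edge-by-edge strategy, but the severing mechanism at its core does not go through, for two concrete reasons. First, the fresh color $c'$ for a gateway vertex $v_\ell$ need not exist in a useful form when $k=\tw+1$: the clique argument only gives $|N_H^+(v_\ell)|\le \tw$, so $[k]\setminus\widetilde\alpha(N_H^+(v_\ell))$ may consist solely of $\widetilde\alpha(v_\ell)$ itself, and "recoloring" $v_\ell$ to its own color severs nothing. (The paper's \cref{alg:list} escapes the analogous bind only because a \emph{problematic} vertex has two greater neighbors sharing a color, which frees one color; your gateway $v_\ell$ may have $v_m$ as its unique greater neighbor in the chain, and then no color is saved.) Second, your termination potential fails: the recursive call $K_{v_\ell,c'}$ swaps $c'$ with one of the two chain colors, so vertices previously colored $c'$ acquire a chain color and can \emph{join} the outer chain $K_{v_j,c}$, attaching new bad vertices of index possibly larger than the current maximum. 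The "largest bad vertex" is therefore not monotone, and the $O(\tw\cdot n)$ per-step bound is unsupported; note also that \cref{prop:blackbox} and \cref{lem:claim}, which you invoke for the cost analysis, crucially assume degree bounds ($\deg(v_i)\le d$, or $|L(v_i)|\ge\deg(v_i)+1$) that bounded treewidth does not provide.

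The idea you are missing is the paper's use of a \emph{progressively augmented} graph: \cref{alg:tw} adds the edges of $E(H)\setminus E(G)$ in lexicographic order and performs every auxiliary Kempe change in the current graph $\widetilde G$ rather than in $G$. Chordality of $H$ together with this edge order forces each chain $K_{u_i,c_i}(\widetilde\alpha,\widetilde G)$ (with $c_i\notin\widetilde\alpha(N^+_{\widetilde G}[u_i])$) to be a tree in which every vertex is smaller than its parent: a vertex with two greater neighbors in the chain would have those neighbors adjacent in $H$, hence already adjacent in $\widetilde G$, contradicting that they share a color. Decreasing chains never reach the frozen high-index region, so no severing, no recursion, and no fresh-color dilemma arise; each Kempe change in $\widetilde G$ then costs at most $n$ Kempe changes in $G$, giving the $O(\tw\cdot n^2)$ bound. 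To salvage your vertex-by-vertex scheme you would need an analogous structural reason why the relevant chains stay below index $j$, and working purely in $G$ does not provide one.
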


To prove Proposition~\ref{prop:tw} and obtain a $k$-coloring of $H$, we
gradually ``add'' to $G$ the edges in $E(H) \setminus E(G)$. To add an edge, we
first reach a $k$-coloring where the extremities have distinct colors, then
propagate any later Kempe change involving one extremity to the other
extremity. We formalize this process through Algorithm~\ref{alg:tw}. Let $v_1w_1
\prec \ldots \prec v_qw_q$ be the edges in $E(H) \setminus E(G)$ in the
lexicographic order, where $v_i \prec w_i$ for every $i$.

\begin{algorithm}[h!]
  \SetAlgoLined \SetKwInOut{Input}{Input} \SetKwInOut{Output}{Output}
  \caption{Going from $\alpha$ to $\alpha'$}
  \label{alg:tw}
  \Input{$G$ a graph of treewidth $\tw$, a $k$-coloring $\alpha$ of $G$ with $k
    \ge \tw +1$ and $v_1w_1 \prec \ldots \prec
    v_qw_q$ such that $G+\{v_1w_1,\ldots,v_qw_q\}$ is a $\tw + 1$-colorable
    chordal graph $H$}
  \Output{A $k$-coloring $\widetilde{\alpha}$ of $H$, that
    is Kempe equivalent to $\alpha$}
  Let $\widetilde{G} \leftarrow G$ and $\widetilde{\alpha} \leftarrow \alpha$\;
  \nl\label{alg:boucle1}\For{$j$ from $1$ to $q$}
           { \If{$\widetilde{\alpha}(v_j) = \widetilde{\alpha}(w_j)$}
             { Let $c \in [k]\setminus \widetilde{\alpha}(N^+_H[v_j])$\;
               \tcp{Possible because $\widetilde{\alpha}(v_j) =
                 \widetilde{\alpha}(w_j)$ and $|N^+_H[v_j]|\leq \tw \le k-1$.}
               Let $U = N^-_{\widetilde{G}}(v_j) \cap N^-_{\widetilde{G}}(w_j) =
               \{ u_1 \prec \ldots \prec u_p\}$\;
               \nl\label{alg:boucle2}\For{$i = p$ down to 1}
                        { \If{$\widetilde{\alpha}(u_i) = c$}
                          { Let $c_i \in [k]\setminus
                            \widetilde{\alpha}(N^+_{\widetilde{G}}[u_i])$\;
                            \tcp{Possible because $\widetilde{\alpha}(v_j) =
                              \widetilde{\alpha}(w_j)$ and
                              $|N^+_{\widetilde{G}}[u_i]|\leq \tw \le k-1$.}
                            \nl\label{alg:line:1} $\widetilde{\alpha}\leftarrow
                            K_{u_i,c_i}(\widetilde{\alpha},\widetilde{G})$\;
                          }
                          \nl\label{assert1} \tcp{Now $c \notin \widetilde{\alpha}(\{x \in
                            N_{\widetilde{G}}(v_j) | x \ge u_i\})$}
                        }
                        \nl\label{assert2} \tcp{Now $c \notin
                          \widetilde{\alpha}(N_{\widetilde{G}}(v_j))$}
                        \nl\label{alg:line:2}$\widetilde{\alpha}\leftarrow
                        K_{v_j,c}(\widetilde{\alpha},\widetilde{G})$\;
             }
             $\widetilde{G} \leftarrow \widetilde{G} \cup \{v_jw_j\}$\;
           }
\end{algorithm}

We will prove the following three claims. Note that Proposition~\ref{prop:tw}
follows from Claims 1 and 2, while Claim 3 simply guarantees that the proof
of~\cref{thm:tw} is indeed constructive.

\begin{claim}[1]\label{cl:algocorrect}
Algorithm~\ref{alg:tw} outputs a $k$-coloring $\alpha'$ of $G$ that is Kempe
equivalent to $\alpha$ and such that $\alpha'(u) \neq \alpha'(v)$ for all $uv
\in E(H)$.
\end{claim}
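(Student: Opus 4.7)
The plan is to prove Claim~\ref{cl:algocorrect} by induction on the iterations of the outer loop of Algorithm~\ref{alg:tw}, maintaining as invariant that at the start of each iteration $j$, $\widetilde{\alpha}$ is a proper $k$-coloring of the current $\widetilde{G}$. The base case holds since $\widetilde{G}=G$ and $\widetilde{\alpha}=\alpha$. For the inductive step, if $\widetilde{\alpha}(v_j)\neq \widetilde{\alpha}(w_j)$ then adding $v_jw_j$ to $\widetilde{G}$ preserves properness; the nontrivial case is $\widetilde{\alpha}(v_j)=\widetilde{\alpha}(w_j)=a$, where I need to show that the body of the if-branch ends with $\widetilde{\alpha}(v_j)\neq \widetilde{\alpha}(w_j)$. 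Every Kempe change is performed in $\widetilde{G}\supseteq G$ and can be simulated in $G$ by applying the same color swap on each connected component of the chain within the bichromatic subgraph of $G$, so Kempe equivalence in $G$ is preserved throughout. Iterating until $j=q$ yields $\widetilde{G}=H$ and hence the desired proper $k$-coloring $\alpha'$ of $H$.

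First I would verify that the color selections are well-defined. Because $v_j,w_j\in N^+_H[v_j]$ share the color $a$, we have $|\widetilde{\alpha}(N^+_H[v_j])|\leq |N^+_H[v_j]|-1\leq \tw\leq k-1$, giving a valid $c$; the analogous argument gives $c_i$ for each $u_i\in U$, using that $v_j,w_j\in N^+_{\widetilde{G}}(u_i)$ still share the color $a$ throughout the inner loop. A crucial preliminary observation is that every inner Kempe change $K_{u_i,c_i}(\widetilde{\alpha},\widetilde{G})$ preserves the colors of $v_j$ and $w_j$: by the initial choice $c\neq a$, and $c_i\notin \widetilde{\alpha}(N^+_{\widetilde{G}}[u_i])$ combined with $v_j,w_j\in N^+_{\widetilde{G}}(u_i)$ colored $a$ forces $c_i\neq a$. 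Hence $v_j$ and $w_j$ lie outside the bichromatic subgraph for $\{c,c_i\}$, so their colors (and the availability of $c$, $c_i$) are stable throughout the inner loop.

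The main technical step is to show that after the inner loop, $w_j$ is not in the chain $K_{v_j,c}(\widetilde{\alpha},\widetilde{G})$, so the final Kempe change at line~\ref{alg:line:2} leaves $\widetilde{\alpha}(w_j)=a$ while setting $\widetilde{\alpha}(v_j)=c$. I would establish this by proving the stronger invariant stated in the comment at label~\ref{assert1} by descending induction on $i$: after processing $u_i$, no neighbor $x$ of $v_j$ in $\widetilde{G}$ with $x\succeq u_i$ has color $c$. Neighbors $x\succ v_j$ lie in $N^+_H[v_j]$ and are handled by the initial choice of $c$. For smaller $x$, either $x=u_\ell\in U$ for some $\ell\geq i$, which was processed and assigned $c_\ell\neq c$, and the constraints $c_{\ell'}\notin \widetilde{\alpha}(N^+_{\widetilde{G}}[u_{\ell'}])$ in later iterations prevent $c$ from being reintroduced on $u_\ell$ through chain propagation; or $x\in N^-_{\widetilde{G}}(v_j)\setminus U$, in which case the lex-ordering of edge insertions forces $xw_j\notin E(H)$, and a local analysis of the clique $N^+_H[x]$ in the chordal graph $H$ together with the evolution of $\widetilde{\alpha}$ over prior outer iterations precludes $\widetilde{\alpha}(x)=c$.

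The main obstacle is controlling this descending induction. Although the Kempe changes $K_{u_i,c_i}$ do not affect $v_j$ and $w_j$, they need not be trivial and may propagate through vertices smaller than $u_i$ and touch previously processed $u_\ell$'s with $\ell>i$. The constraint $c_i\notin \widetilde{\alpha}(N^+_{\widetilde{G}}[u_i])$ is precisely what confines the chain so as to preserve invariant~\ref{assert1} at each step, and making this bookkeeping precise is the heart of the proof.
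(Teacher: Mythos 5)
Your skeleton matches the paper's: the loop invariant that $\widetilde{\alpha}$ stays proper on the current $\widetilde{G}$, the verification that $c$ and the $c_i$ exist, and the observation that $a\notin\{c,c_i\}$ so the inner Kempe changes never touch $v_j$ or $w_j$. All of that is correct. The problem is that you leave unproved exactly the step that carries the proof, and the justification you sketch for it is insufficient. You write that the constraint $c_i\notin\widetilde{\alpha}(N^+_{\widetilde{G}}[u_i])$ ``is precisely what confines the chain'' so that the invariant of comment~\ref{assert1} survives each iteration of loop~\ref{alg:boucle2}. That constraint only prevents the chain $K_{u_i,c_i}(\widetilde{\alpha},\widetilde{G})$ from leaving $u_i$ in the upward direction at its \emph{first} step; it says nothing about the chain descending to small vertices and then climbing back up to a vertex larger than $u_i$ (in particular to an already-processed $u_\ell$ or to another neighbour of $v_j$), which is perfectly possible in a general graph. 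Since the change swaps $c$ with $c_i$, such an excursion could reinstall the colour $c$ on a large neighbour of $v_j$ and destroy the invariant. So ``making this bookkeeping precise'' is not bookkeeping: it needs a structural fact you never state.

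The missing fact is the core of the paper's argument: the \Kcn of $K_{u_i,c_i}(\widetilde{\alpha},\widetilde{G})$ is a tree rooted at $u_i$ in which every vertex is smaller than its parent, so the whole chain lies strictly below $u_i$ except for $u_i$ itself. This is proved by induction on the distance to $u_i$ and uses both the chordality of $H$ (so that $N^+_H(y)$ is a clique for every $y$) and the lexicographic order of insertion of $E(H)\setminus E(G)$: if a vertex $y$ of the chain had a larger child $x$, then $x$ and the parent $z$ of $y$ would both lie in $N^+_H(y)$, hence $xz\in E(H)$, and since $z\prec u_i\prec v_j$ the edge $xz$ precedes $v_jw_j$ and is already present in $\widetilde{G}$, contradicting the distance of $x$ to $u_i$; a similar argument rules out a vertex having two parents. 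Without this lemma your descending induction on $i$ cannot be closed. A secondary gap of the same nature: for neighbours $x\in N^-_{\widetilde{G}}(v_j)\setminus U$ you assert that ``a local analysis of the clique $N^+_H[x]$ \dots precludes $\widetilde{\alpha}(x)=c$,'' but no such analysis is given, and it is not clear one exists; what actually saves the final change at line~\ref{alg:line:2} is again the confinement of the chains, not a colour constraint on those vertices.
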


\begin{claim}[2]\label{cl:length}
Algorithm~\ref{alg:tw} performs $O(\tw \cdot n^2)$ Kempe changes in $G$ to obtain
$\alpha'$ from $\alpha$.
\end{claim}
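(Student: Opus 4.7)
The plan is to prove Claim 2 by a straightforward double-counting argument over the two nested loops in Algorithm~\ref{alg:tw}: I would separately bound (a) the number of iterations of the outer \textbf{for} loop and (b) the number of Kempe changes performed during a single outer iteration, and then multiply.

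For (a), the outer loop iterates over the edges $v_1w_1 \prec \cdots \prec v_qw_q$ of $E(H) \setminus E(G)$, so it executes at most $|E(H)|$ times. Since $H$ is chordal with $\omega(H) \le \tw + 1$, a perfect elimination ordering of $H$ witnesses that $H$ is $\tw$-degenerate, and hence $|E(H)| \le \tw \cdot n$. Thus the outer loop executes at most $\tw \cdot n$ times.

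For (b), if the test $\widetilde{\alpha}(v_j) = \widetilde{\alpha}(w_j)$ fails on entry to an outer iteration, no Kempe change is performed. Otherwise, the inner \textbf{for} loop iterates $|U|$ times and performs at most one Kempe change per iteration (at line~\ref{alg:line:1}), after which a single final Kempe change is performed at line~\ref{alg:line:2}. Hence at most $|U| + 1 \le n$ Kempe changes are performed per outer iteration, since $U \subseteq V(G)$. Multiplying the two bounds yields at most $(\tw \cdot n)(n+1) = O(\tw \cdot n^2)$ Kempe changes in total, as claimed.

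I do not expect any serious obstacle: the argument is essentially bookkeeping, and the only structural input is the chordal degeneracy bound $|E(H)| \le \tw \cdot n$, which is already invoked in the preliminaries. The one small subtlety worth verifying is that each execution of line~\ref{alg:line:1} or line~\ref{alg:line:2} indeed counts as exactly one Kempe change (in the current $\widetilde{G}$), which is transparent from the syntax of the algorithm since each such line applies a single operator $K_{\cdot,\cdot}(\widetilde{\alpha}, \widetilde{G})$.
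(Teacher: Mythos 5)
There is a genuine gap, and it is twofold. First, the claim counts Kempe changes \emph{in $G$}, whereas lines~\ref{alg:line:1} and~\ref{alg:line:2} perform Kempe changes in the current graph $\widetilde{G}$, which contains edges of $E(H)\setminus E(G)$ added so far. A \Kcn of $\widetilde{G}$ may be disconnected in $G$, so a single \Kcg in $\widetilde{G}$ corresponds to between $1$ and $n$ \Kcgs in $G$ (the paper states this explicitly just before the proofs of the claims). Your accounting stops at ``one Kempe change per execution of line~\ref{alg:line:1} or~\ref{alg:line:2}'' and never pays this conversion cost; doing so turns your bound into $O(\tw\cdot n^3)$ Kempe changes in $G$, which does not establish the claim. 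Second, precisely because of that extra factor of $n$, the per-iteration bound ``at most $|U|+1\le n$ changes per outer iteration, times $\tw n$ iterations'' is too coarse: one needs the total number of \Kcgs performed in $\widetilde{G}$ to be $O(\tw\cdot n)$, not $O(\tw\cdot n^2)$.

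The paper gets the sharper $O(\tw\cdot n)$ bound by an amortized argument that your proposal is missing. The key observation is that for each vertex $x$ there is at most one index $j$ with $v_j=x$ for which the conditional $\widetilde{\alpha}(v_j)=\widetilde{\alpha}(w_j)$ is entered: the first time this happens, $x$ is recolored at line~\ref{alg:line:2} with a color outside $\widetilde{\alpha}(N^+_H[x])$, and since all edges $xy$ with $y\succ x$ are consecutive in the lexicographic order of $E(H)\setminus E(G)$ (and the intervening \Kcgs only touch vertices $\preceq x$), the conditional fails for all later edges with first endpoint $x$. Hence line~\ref{alg:line:2} fires at most $n$ times in total. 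Moreover, whenever a vertex $x$ plays the role of $u_i$ at line~\ref{alg:line:1}, both $v_j$ and $w_j$ lie in $N^+_H(x)$, which has size at most $\tw$; combined with the previous point, $x$ serves as $u_i$ at most $\tw$ times over the whole run, so line~\ref{alg:line:1} fires at most $\tw\cdot n$ times in total. This gives $O(\tw\cdot n)$ \Kcgs in $\widetilde{G}$, and only then does multiplying by the factor of $n$ from the $\widetilde{G}$-to-$G$ conversion yield the claimed $O(\tw\cdot n^2)$. Your outer-loop count $|E(H)|\le \tw\cdot n$ is correct but is not the bottleneck; the amortization over vertices and the conversion to $G$ are the two ingredients your argument needs.
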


\begin{claim}[3]\label{cl:algocomp}
Algorithm~\ref{alg:tw} runs in $O(\tw \cdot n^4)$ time.
\end{claim}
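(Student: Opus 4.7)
The plan is to bound the running time of Algorithm~\ref{alg:tw} by combining a per-operation cost analysis with the total count of Kempe changes already guaranteed by Claim~\ref{cl:length}. Since the combinatorial content has been settled by Claims~\ref{cl:algocorrect} and~\ref{cl:length}, this last claim is essentially a bookkeeping exercise; the only potential ``obstacle'' is to make sure the data structures realise the per-operation costs I need.

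First I would observe that throughout the execution, the current graph $\widetilde{G}$ is a subgraph of $H$. Since $H$ is chordal with $\omega(H)\le \tw+1$, it is $\tw$-degenerate, so $|E(\widetilde{G})|\le |E(H)|\le \tw\cdot n$. A single Kempe change on $\widetilde{G}$ can therefore be implemented in $O(\tw\cdot n)$ time by a bichromatic BFS that identifies the Kempe chain and flips its two colors. Each lookup of a fresh color such as $c\in[k]\setminus \widetilde{\alpha}(N^+_H[v_j])$ or $c_i\in[k]\setminus \widetilde{\alpha}(N^+_{\widetilde{G}}[u_i])$ only needs to inspect $\tw+1$ candidates and thus costs $O(\tw)$.

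Next I would bound the overhead per outer iteration. For a fixed $j$, computing the set $U=N^-_{\widetilde{G}}(v_j)\cap N^-_{\widetilde{G}}(w_j)$ costs $O(n)$ using indicator vectors or sorted adjacency lists, and the inner \texttt{for} loop on line~\ref{alg:boucle2} iterates $|U|\le n$ times, performing at most one Kempe change and one color lookup per step. Outside of Kempe changes, each outer iteration therefore costs $O(n)$; since the number of outer iterations is $q=|E(H)\setminus E(G)|=O(\tw\cdot n)$, the non-Kempe overhead totals $O(\tw\cdot n^2)$.

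Finally I would plug in Claim~\ref{cl:length}: the algorithm performs $O(\tw\cdot n^2)$ Kempe changes in total, each of cost $O(\tw\cdot n)$, so the cumulative cost of all Kempe changes is $O(\tw^2\cdot n^3)$, which is $O(\tw\cdot n^4)$ using the trivial bound $\tw\le n$. The preprocessing --- building $H$ in $f(\tw)\cdot n$ via Bodlaender's algorithm, extracting the elimination ordering by Lex-BFS in $O(n)$, and sorting the edges of $E(H)\setminus E(G)$ lexicographically in $O(\tw\cdot n\log n)$ --- is negligible compared to $O(\tw\cdot n^4)$, so the overall bound is as claimed.
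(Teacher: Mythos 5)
Your proof is correct and follows essentially the same strategy as the paper: bound the non-Kempe overhead of the loops, then multiply the $O(\tw\cdot n^2)$ Kempe changes from Claim~\ref{cl:length} by a per-change cost. The only difference is that you charge $O(\tw\cdot n)$ per Kempe change by exploiting the sparsity of $\widetilde{G}\subseteq H$, which actually yields the sharper bound $O(\tw^2\cdot n^3)$, whereas the paper simply charges $O(n^2)$ per change ``in a naive way'' to land directly on $O(\tw\cdot n^4)$.
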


In Algorithm~\ref{alg:tw}, the variable $\widetilde{G}$ keeps track of how close
we are to a $k$-coloring of $H$. Before the computations start,
$\widetilde{G}=G$. When the algorithm terminates, $\widetilde{G} = H$. At every
step, $G$ is a subgraph of $\widetilde{G}$.  To refer to $\widetilde{G}$ or
$\widetilde{\alpha}$ at some step of the algorithm, we may say the \emph{current}
graph or \emph{current} coloring.
The \Kcgs that we discuss are performed in $\widetilde{G}$. Consequently, the
corresponding set of vertices might be disconnected in $G$, and every \Kcg in
$\widetilde{G}$ may correspond to between $1$ and $n$ \Kcgs in $G$.

\begin{proof}[of Claim 1]
By construction, at every step $\widetilde{\alpha}$ is Kempe equivalent to $\alpha$.
We prove the following loop invariant: at every step, $\widetilde{\alpha}$ is a
$k$-coloring of $\widetilde{G}$. Since $\widetilde{G} =H$ at the end of the
algorithm, proving the loop invariant will yield the desired conclusion.

The invariant holds at the beginning of the algorithm, when $\widetilde{G} =
G$.

Assume that at the beginning of the $j$-th iteration of the
loop~\ref{alg:boucle1}, $\widetilde{\alpha}$ is a proper coloring of
$\widetilde{G}$. All the \Kcgs in the loop are performed in $\widetilde{G}$,
so we only need to prove that at the end of the iteration,
$\widetilde{\alpha}(v_j) \neq \widetilde{\alpha}(w_j)$.

This follows from the validity of comments~\ref{assert1}
and~\ref{assert2}. The latter is a direct consequence of the former, so we
focus on arguing that after the step $i$ of the inner loop~\ref{alg:boucle2},
we have $c \notin \widetilde{\alpha}(\{x \in N_{\widetilde{G}}(v_j) | x \ge
u_i\})$.
  The key observation is that at the $i$-th step of the inner
  loop~\ref{alg:boucle2}, the \Kcgs performed at line~\ref{alg:line:1} involve
  only vertices smaller than $u_i$.  We prove by induction the stronger
  statement the \Kcn $T$ involved in the \Kcg $K_{u_i,c_i}(\widetilde{\alpha},
  \widetilde{G})$ is a tree rooted at $u_i$ in which all the nodes are smaller
  than their father.
  \begin{itemize}
  \item $c_i \not\in \widetilde{\alpha}(N_{\widetilde{G}}^+(u_i))$ so all the
    vertices at distance 1 in $T$ from $u_i$ are smaller than $u_i$.
  \item Let $x$ at distance $d+1$ from $u_i$ in $T$. Let $y$ be a neighbour of
    $x$ at distance $d$ from $u_i$. Assume by contradiction that $x \succ y$. By
    induction hypothesis, there is a unique neighbour $z$ of $y$ at distance
    $d-1$ from $u_i$, with $y \prec z$. Both $z$ and $x$ are in $N^+_H(y)$ and
    since $H$ is chordal, this implies $zx \in E(H)$.
    We have $z \prec u_i \prec v_j$ so $zx \prec v_jw_j$ and $zx \in
    E(\widetilde{G})$. In particular, $x$ is at distance $d$ from $u_i$ in $T$,
    which raises a contradiction and proves $x \prec y$.\\ Assume by
    contradiction that $x$ is adjacent to two vertices $y$, $z$ at distance $d$
    from $u_i$ in $T$. Then $y$ and $z$ are identically colored so $yz \notin
    E(\widetilde{G})$. Moreover $y,z \in N^+_H(x)$ and $H$ is chordal, hence
    $yz$ is an edge of $H$. Since $y,z \prec v_j$, we have $yz \prec
    v_jw_j$. Thus, $yz$ belongs to $\widetilde{G}$, raising a contradiction.
  \end{itemize}
  As a result, the \Kcg of line~\ref{alg:line:2} does not recolor any vertex
  larger than $u_i$ with color $c$, and the comment~\ref{assert2} is
  true. Therefore at the beginning of line~\ref{alg:line:2},
  $\widetilde{\alpha}(v_j) = \widetilde{\alpha}(w_j)$ and $c \notin \alpha(N(v_j))$. At
  the end of line~\ref{alg:line:2}, $v_j$ and $w_j$ are colored
  differently.
\end{proof}

\begin{proof}[of Claim 2]
  We now prove that the number of \Kcgs in $G$ performed by the algorithm is
  $O(\tw \cdot n^2)$.

  We first prove that for each vertex $x$, there exists at most one step $j$ of
  the loop~\ref{alg:boucle1} for which $v_j=x$ and we enter the conditional
  statement $\widetilde{\alpha}(v_j) = \widetilde{\alpha}(w_j)$. Indeed, the first time
  we enter the conditional statement, the vertex $x$ is recolored with a color
  $c$ not in $N^+_H(x)$ at line~\ref{alg:line:2}. Note that all the edges $xy$
  with $y \succ x$ are consecutive in the ordering of $E(H)\setminus
  E(G)$. Therefore, once the vertex $x$ is recolored, all the remaining edges
  $xy$ are handled without Kempe change, as the conditional statement is not
  satisfied.  This implies directly that line~\ref{alg:line:2} is executed at
  most $n$ times.

  Now, we bound the number of times $x$ plays the role of $u_i$ in the \Kcg at
  line~\ref{alg:line:1}. For each step $j$ of loop~\ref{alg:boucle1} for which
  it happens, we have $v_j, w_j \in N_H^+(x)$. Since $|N^+_H(x)| \leq \tw$ and
  each $v_j$ is involved at most once by the above argument, we obtain that $x$
  plays this role at most $\tw$ times.

  Consequently the overall number of \Kcgs performed in $\widetilde{G}$ by the
  algorithm is $O(\tw\cdot n)$. Performing a \Kcg in $\widetilde{G}$ is equivalent to
  performing a \Kcg in all the connected component of $G$ of the \Kcn of
  $\widetilde{G}$. Therefore, the number of \Kcgs performed in $G$ by the
  algorithm is $O(\tw\cdot n^2)$.
\end{proof}

\begin{proof}[of Claim 3]
  In total, the loop~\ref{alg:boucle2} is executed at most once for every pair
  of vertices in $N^+_H(u)$ for each $u \in V(G)$, that is $O(\tw^2n)$
  times. However, we also take into account the number of \Kcgs that need to be
  performed. By Claim 2, only $O(\tw \cdot n^2)$ \Kcgs are performed in $G$. As a result,
  the total complexity of the algorithm is $O(\tw^2n + \tw\cdot n^4) = O(\tw\cdot n^4)$
  (performing \Kcgs in a naive way in $G$).
\end{proof}

\section*{Acknowledgments}
The authors would like to thank Carl Feghali for fruitful discussions.

\bibliographystyle{IEEEtranSA}  
\bibliography{biblio}

\clearpage
\appendix
\section{Proof of~\cref{thm:delta}}\label{app}
This section is an adaptation of the proof provided
in~\cite{bonamy2019conjecture} for $k \ge 4$. The proof handles separately the
cases of graphs not 3-connected, 3-connected graphs with diameter at most 2 and
3-connected graphs of diameter at least 3.

\subsection{$G$ not 3-connected}
We first need a few lemmas.
\begin{lemma}[Adapted from Lemma 3.2 in~\cite{lasvergnas1981kempe}]\label{lem:S_complete_permutation}
  Let $G_1$ and $G_2$ be two $(k-1)$-degenerate graphs of maximum degree $k$,
  such that $S = G_1 \cap G_2$ is a complete graph. Let $\alpha$ be a
  $k$-coloring of $G$ and $\beta_{|G_1}$ be a $k$-coloring of $G_1$. There
  exists a $k$-coloring $\gamma$ of $G$ within $O(|S| n^2)$ \Kcgs of $\alpha$
  such that $\gamma_{|G_1} = \beta_{|G_1}$ and $\gamma_{|G_2}$ is equal to
  $\alpha_{|G_2}$ up to a permutation of colors.
\end{lemma}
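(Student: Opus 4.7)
The plan is to split the task into two stages: first align the colouring of $S$ with $\beta$ via a global permutation of $\alpha$, then recolour $G_1\setminus S$ to match $\beta$ while leaving $G_2$ pointwise unchanged. Because $S$ is a clique, $\alpha|_S$ and $\beta|_S$ are injective, so there exists a permutation $\sigma$ of $[k]$ with $\sigma\circ\alpha|_S=\beta|_S$; choose it with support contained in $\alpha(S)\cup\beta(S)$, of size at most $2|S|$. For Stage~1, decompose $\sigma$ into $O(|S|)$ transpositions, and realise each transposition $(c,c')$ by performing one \Kcg on each connected component of the current $(c,c')$-bichromatic subgraph of $G$ (at most $n$ per transposition). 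This uses $O(|S|\cdot n)$ \Kcgs and produces a colouring $\alpha'$ with $\alpha'|_{G_2}=\sigma\circ\alpha|_{G_2}$ and $\alpha'|_S=\beta|_S$.

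Stage~2 is handled by induction on $|V(G)\setminus S|$. The base case $G=S$ is already settled by Stage~1 alone; if $G_2=S$ but $G_1\supsetneq S$, then $G=G_1$ and the first case of~\cref{prop:blackbox} applied to $G_1$ directly yields the required $O(n^2)$ \Kcgs from $\alpha'|_{G_1}$ to $\beta|_{G_1}$. Otherwise pick $u\in G_2\setminus S$ with $\deg_{G_2}(u)\le k-1$; then $\deg_G(u)=\deg_{G_2}(u)\le k-1$ because $u$ has no neighbour in $G_1\setminus S$. Apply the induction hypothesis to the pair $(G_1,\,G_2\setminus\{u\})$, whose intersection is still $S$, and to the current colouring restricted to $G\setminus\{u\}$: this produces a sequence of $O(|S|(n-1)^2)$ \Kcgs in $G\setminus\{u\}$ that achieves the desired recolouring there. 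Lift each \Kcg to $G$ using the trick from the proof of~\cref{lem:vergnas}: if adding $u$ to the graph would merge two bichromatic components, prepend a trivial \Kcg moving $u$ to a colour free on $N_G[u]$, which exists because $\deg_G(u)\le k-1<k$. A single trivial \Kcg at the end resets $u$ to $\sigma(\alpha(u))$, a colour free on $N_G(u)$ because $\sigma\circ\alpha$ is a proper colouring. The lifting adds only a constant factor per change, so Stage~2 also costs $O(|S|\cdot n^2)$, yielding the claimed total bound.

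The main obstacle is guaranteeing the existence of the low-degree vertex $u\in G_2\setminus S$. The $(k-1)$-degeneracy of $G_2$ only forces a vertex of degree at most $k-1$ \emph{somewhere} in $G_2$, possibly inside $S$. If no suitable $u$ exists in $G_2\setminus S$, one symmetrically looks for $v\in G_1\setminus S$ with $\deg_{G_1}(v)\le k-1$, recolours it to $\beta(v)$ by a trivial \Kcg (feasible since $\deg_G(v)=\deg_{G_1}(v)\le k-1$ and $\beta(v)\notin\beta(N_{G_1}(v))$), and invokes induction on $(G_1\setminus\{v\},\,G_2)$. The remaining pathological case, where every low-degree vertex of both $G_1$ and $G_2$ is confined to $S$, forces $|S|$ to be close to $k$ and the non-$S$ parts of each $G_i$ to be very restricted in structure; this residual configuration can then be treated separately by hand, still within the $O(|S|\cdot n^2)$ budget.
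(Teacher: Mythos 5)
The central claim of the lemma is the $O(|S|\,n^2)$ bound, and that is exactly where your argument breaks. In Stage~2 you peel off one vertex $u$ of $G_2\setminus S$ at a time, recurse on $G\setminus\{u\}$, and lift the resulting sequence back to $G$ via the trick from the proof of \cref{lem:vergnas}. That lift turns each \Kcg of the inner sequence into up to \emph{two} \Kcgs of the outer one (a prepended trivial change on $u$ plus the change itself), and this happens at \emph{every} level of the recursion. Writing $T(m)$ for the length of the sequence when $m=|V(G_2)\setminus S|$ vertices remain to be peeled, you get $T(m)\le 2\,T(m-1)+O(1)$ with $T(0)=O(n^2)$, i.e.\ $T(m)=O(2^m n^2)$. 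The statement that ``the lifting adds only a constant factor per change'' hides the fact that the implied constant in $O(|S|(n-1)^2)$ doubles at each level, so the induction on the bound does not close. This is precisely the exponential blow-up of the Las Vergnas--Meyniel argument that the lemma is designed to avoid, so Stage~2 needs a genuinely different mechanism rather than a patch.

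Two further points. First, the low-degree vertex $u\in V(G_2)\setminus S$ with $\deg_{G_2}(u)\le k-1$ need not exist: $(k-1)$-degeneracy only guarantees such a vertex somewhere in every subgraph, and it may always lie inside $S$. Your ``pathological case'' paragraph acknowledges this but proves nothing; as written it is a gap, not a remark. Second, the final reset of $u$ should use whatever permutation the inductive call actually produced on $G_2\setminus\{u\}$ (the induction hypothesis only gives \emph{some} permutation of $\alpha$), not $\sigma$. For comparison, the paper's proof avoids all of this: it runs the first case of \cref{prop:blackbox} on $G_1$, using the refinement that \emph{each vertex of $G_1$ is recolored only $O(n_1)$ times}, performs each change directly in $G$, and whenever the chain meets $S$ it additionally swaps the two relevant colors on every bichromatic component of $G_2$. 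This keeps $\widetilde{\alpha}_{|G_2}$ a permutation of $\alpha_{|G_2}$ as an invariant, and the number of leak events is $O(|S|\,n_1)$ because each one recolors a vertex of $S$; hence the $O(n_1^2+|S|\,n_1 n_2)=O(|S|\,n^2)$ bound. The per-vertex recoloring count in \cref{prop:blackbox} is the ingredient your proof never uses, and it is what makes the polynomial bound possible.
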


\begin{proof}
  By~\cref{prop:blackbox}, there exists a series of \Kcgs in $G_1$, leading
  from $\alpha_{|G_1}$ to $\beta_{|G_1}$, such that the color of each vertex of
  $G_1$ is modified at most $O(n_1)$ times. We will adapt this sequence to $G$ to
  ensure that at all times, the current coloring $\widetilde{\alpha}$ differs
  from $\alpha$ on $G_2$ only by a permutation of colors. For each \Kcg
  $K_{u,c}(\widetilde\alpha, G_1)$ in the sequence, if
  $K_{u,c}(\widetilde\alpha, G) \cap S = \emptyset$, then simply perform
  $K_{u,c}(\widetilde\alpha,G)$. If $K_{u,c}(\widetilde\alpha,G) \cap S \neq
  \emptyset$, then swap the colors $\widetilde\alpha(u)$ and $c$ in each \Kcn of
  $G_2$ in addition to performing $K_{u,c}(\widetilde\alpha,G)$.\\ The latter
  case occurs at most $O(|S| n_1)$ times, each time resulting in $O(n_2)$
  \Kcgs in $G_2$. The sequence we obtain has length $O(n_1^2+|S|n_1n_2)
  = O(|S|n^2)$.
\end{proof}

\begin{corollary}[Adapted from Lemma 3.3 in~\cite{lasvergnas1981kempe} and in~\cite{bonamy2019conjecture}]\label{cor:S_complete}
  Let $G_1$ and $G_2$ be two $(k-1)$-degenerate graphs of maximum degree $k$,
  such that $S = G_1 \cap G_2$ is a complete graph. Let $G = G_1 \cup G_2$, any
  two $k$-colorings of $G$ are equivalent up to $O(|S|n^2)$.
\end{corollary}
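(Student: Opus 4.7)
The plan is to invoke Lemma~\ref{lem:S_complete_permutation} twice, swapping the roles of $G_1$ and $G_2$, and then to clean up a residual color permutation by a cheap batch of Kempe changes supported on $G_1 \setminus S$.

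First, I would apply the lemma to obtain from $\alpha$ a coloring $\alpha'$ with $\alpha'_{|G_1} = \beta_{|G_1}$ and $\alpha'_{|G_2} = \pi \circ \alpha_{|G_2}$ for some permutation $\pi$ of $[k]$; this costs $O(|S|n^2)$ Kempe changes. Then I would apply the lemma a second time, with the roles of $G_1$ and $G_2$ exchanged, starting from $\alpha'$ to reach a coloring $\alpha''$ with $\alpha''_{|G_2} = \beta_{|G_2}$ and $\alpha''_{|G_1} = \sigma \circ \beta_{|G_1}$ for some permutation $\sigma$, at a further cost of $O(|S|n^2)$. Since $S \subseteq G_1 \cap G_2$, we have $\alpha''_{|S} = \beta_{|S}$ (viewed through $G_2$) but also $\alpha''_{|S} = \sigma \circ \beta_{|S}$ (viewed through $G_1$), so $\sigma$ necessarily fixes every color appearing on $S$; in particular $\sigma$ is a permutation of at most $k - |S|$ colors.

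It only remains to transform $\alpha''$ into $\beta$ by undoing $\sigma$ on $G_1$ without disturbing $G_2$. I would decompose $\sigma$ as a product of at most $k - |S| - 1$ transpositions, each of the form $(a,b)$ with $a, b \notin \beta(S)$. For such a transposition, the bichromatic subgraph of $G$ induced by vertices colored $a$ or $b$ contains no vertex of $S$. Because $S$ separates $G_1 \setminus S$ from $G_2 \setminus S$ in $G$ (every edge of $G = G_1 \cup G_2$ lies entirely in $G_1$ or in $G_2$, and so cannot join the two sides), each connected component of this bichromatic subgraph sits entirely inside $G_1 \setminus S$ or inside $G_2 \setminus S$. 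Swapping colors on each of the at most $n$ components lying in $G_1 \setminus S$ is a valid Kempe change of $G$ that leaves $G_2$ untouched, so implementing one transposition costs $O(n)$ Kempe changes. The whole cleanup uses $O(kn)$ Kempe changes, which is $O(|S|n^2)$ since $k \le n$.

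Summing the three phases yields $O(|S|n^2)$ Kempe changes from $\alpha$ to $\beta$, as desired. The main delicate point is the third step: one must check that $\sigma$ acts trivially on the colors appearing on $S$ so that each transposition admits a bichromatic subgraph disjoint from $S$, and then exploit the fact that $S$ is a vertex separator to confine each supporting Kempe change to $G_1 \setminus S$. The two applications of Lemma~\ref{lem:S_complete_permutation} dominate the cost, and no additional structural argument beyond the separator property of $S$ is required.
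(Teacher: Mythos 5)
Your proof is correct and follows essentially the same route as the paper: two applications of Lemma~\ref{lem:S_complete_permutation} with the roles of $G_1$ and $G_2$ exchanged, followed by an $O(kn)$ cleanup of the residual permutation $\sigma$, which fixes the colors of $S$. Your final step is in fact spelled out more carefully than in the paper (which tersely says to swap colors ``in each Kempe chain of $G_2$'', apparently meaning $G_1$), and your justification that each $\{a,b\}$-component avoids $S$ and hence stays on one side of the separator is exactly the argument needed.
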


\begin{proof}
  Let $\alpha$ and $\beta$ be two $k$-colorings of
  $G$. By~\cref{lem:S_complete_permutation} applied to $\alpha$ and
  $\beta_{|G_1}$, there exists $\gamma$ within $O(|S| n^2)$ \Kcgs of
  $\alpha$, such that $\gamma_{|G_1} = \beta_{|G_1}$ and $\gamma_{|G_2}$ is
  equal to $\alpha_{|G_2}$ up to a permutation of
  colors. By~\cref{lem:S_complete_permutation} applied to $\gamma$ and
  $\beta_{|G_2}$, there exists $\delta$ within $O(|S| n^2)$ \Kcgs of
  $\gamma$, such that $\delta_{|G_2} = \beta_{|G_2}$ and $\delta_{|G_1}$ is
  equal to $\gamma_{|G_1} = \beta_{|G_1}$ up to a permutation of colors
  $\sigma$. Note that if a color $c$ is used in $S$, then $c = \sigma(c)$ and
  $\delta^{-1}(c) = \beta^{-1}(c)$, even on $G_1$. By iteratively swapping the
  other colors with their image by $\sigma$ in each \Kcn of $G_2$, one obtains
  the coloring $\beta$ within at most $O(kn)$ \Kcgs .
\end{proof}

\begin{proposition}[Adapted from Proposition 3.1 in~\cite{bonamy2019conjecture}]
  Let $k \ge 3$ and $G$ be a $k$-regular graph that is neither
  3-connected, nor a clique or the 3-prism. Any two $k$-colorings of $G$
  are equivalent up to $O(n^2)$ \Kcgs.
\end{proposition}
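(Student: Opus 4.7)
Since $G$ is not $3$-connected, there exists a vertex cut $S$ with $|S|\leq 2$. Write $G = G_1 \cup G_2$ with $G_1 \cap G_2 = S$ and each $G_i$ connected and properly containing $S$. The plan is to reduce to \cref{cor:S_complete}, whose hypotheses demand that $S$ be a clique and each $G_i$ be $(k-1)$-degenerate with maximum degree at most $k$. As in the argument preceding the proposition, I would handle $k = 3$ via the naive bound of \cref{thm:cubic_bound} (the constant $\Delta^2 = 9$ is absorbed in the $O(n^2)$), and focus on $k \geq 4$.

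First I would treat the case where $S$ is a clique in $G$, which covers $|S|=1$ and the subcase $|S|=2$ with $uv \in E(G)$. The maximum degree of each $G_i$ is inherited from $G$, hence at most $k$. For the $(k-1)$-degeneracy, since $G$ is $k$-regular, each $s \in S$ has at least one neighbour in $G_{3-i}\setminus S$, so $\deg_{G_i}(s) \leq k-1$; a BFS-ordering of $G_i$ rooted at a vertex of $S$ gives a $(k-1)$-degeneracy sequence, since the root has degree at most $k-1$ and every other vertex has its BFS-parent as an earlier neighbour. Applying \cref{cor:S_complete} yields a sequence of $O(|S|\, n^2) = O(n^2)$ \Kcgs between any two $k$-colorings.

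The remaining case is $|S| = 2$, $S=\{u,v\}$, $uv \notin E(G)$, where $S$ is not a clique. Here the plan is to add a virtual edge $uv$ forming $G^+ = G + uv$, in which $\{u,v\}$ becomes an edge cut. For any $k$-coloring $\alpha$ of $G$, a short sequence of $O(n)$ \Kcgs suffices to reach a coloring $\alpha^\star$ with $\alpha^\star(u) \neq \alpha^\star(v)$: when $\alpha(u)=\alpha(v)$, pigeonhole on the $k$ neighbours of $u$ yields a colour repeated in $N(u)$, and the cut structure combined with $k \geq 4$ allows one to locate a Kempe chain recoloring $u$ without touching $v$. Applying the same to $\beta$ gives $\beta^\star$, and both $\alpha^\star, \beta^\star$ are proper $k$-colorings of $G^+$. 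The previous case applied to $G^+$ then provides an $O(n^2)$-long reconfiguration in $G^+$, each of whose \Kcgs translates to at most two \Kcgs in $G$ (one per side of the cut), preserving the $O(n^2)$ total.

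The main obstacle is ensuring that each $G_i + uv$ is $(k-1)$-degenerate with maximum degree at most $k$: adding $uv$ raises $\deg(u)$ and $\deg(v)$ in $G_i + uv$ by one, and if both $u$ and $v$ already had degree $k-1$ in $G_i$ (i.e.\ each has exactly one neighbour in $G_{3-i}\setminus S$), then $G_i + uv$ becomes $k$-regular and the degeneracy hypothesis fails. This forces a very restrictive configuration: in $G_{3-i}$, the unique external neighbours $u',v'$ of $u$ and $v$ are either equal (yielding a cut vertex $u'=v'$ of $G$) or distinct (yielding another $2$-cut $\{u',v'\}$ of $G$ contained in $G_{3-i}$, which either is an edge or, recursively, admits the same structural analysis in a strictly smaller subgraph). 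In all sub-cases one eventually reaches a clique cut and applies the first case, with the overall count of \Kcgs remaining $O(n^2)$.
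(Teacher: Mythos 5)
Your overall plan — clique cut via \cref{cor:S_complete}, non-adjacent $2$-cut via a virtual edge $uv$ plus a short preprocessing to make $u$ and $v$ differently coloured — is the same as the paper's, and you correctly identify the one real obstacle: if both $u$ and $v$ have degree $k-1$ in $G_i$, then $G_i+uv$ is $k$-regular and the degeneracy hypothesis of \cref{cor:S_complete} fails. However, your proposed resolution of that obstacle does not work. Suppose both $u$ and $v$ have a unique neighbour ($u'$ resp.\ $v'$) in $G_{3-i}\setminus S$, with $u'\neq v'$ and $u'v'\notin E(G)$. Every vertex of $A:=G_{3-i}\setminus\{u,v,u',v'\}$ has all $k$ of its neighbours inside $A\cup\{u',v'\}$ (it cannot see $G_i\setminus S$, nor $u$ or $v$, whose only neighbours on this side are $u'$ and $v'$), and $u',v'$ each have exactly one neighbour ($u$, resp.\ $v$) outside $A\cup\{u',v'\}$. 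Hence for the new cut $\{u',v'\}$ the \emph{inner} side plus the virtual edge $u'v'$ is again $k$-regular, so \cref{cor:S_complete} is still inapplicable; worse, $u'$ and $v'$ each have $k-1\ge 2$ neighbours in $A$, so they have no ``unique external neighbour'' on the problematic side, and their unique external neighbours on the other side are $u$ and $v$ themselves — your recursion either cannot be applied or cycles back to the original cut. It does not terminate at a clique cut.

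The paper's fix is to replace only \emph{one} endpoint of the cut: if both $u$ and $v$ have a unique neighbour in $G_{3-i}$, take $w$ to be the unique neighbour of $u$ there (after observing that $w=v'$ or $w\sim v$ would produce a cut vertex, i.e.\ a clique cut) and work with the separator $\{w,v\}$ instead. Then $v$ keeps its $k-1\ge 2$ neighbours on the $G_i$ side and $w$ has $k-1\ge 2$ neighbours on the $G_{3-i}$ side, so both augmented sides are $(k-1)$-degenerate with maximum degree $k$ and \cref{cor:S_complete} applies. Separately, your preprocessing step is asserted rather than proved: when $\alpha(u)=\alpha(v)$ it can happen that all $k-1$ colours other than $\alpha(u)$ appear on $N(u)$ and on $N(v)$, so no trivial recolouring of $u$ or $v$ is available; the paper needs a genuine case analysis (up to three \Kcgs, first using a chain confined to one side of the cut to free a colour at $u$, and treating specially the case where $u$ has a single neighbour in $G_1$ and $v$ a single neighbour in $G_2$). ``Pigeonhole plus the cut structure'' is exactly the part that requires work, so as written this is a second gap, though a more local one.
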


\begin{proof}
  Let $S$ be a separator of minimal size, $|S| = 1$ or $|S| = 2$. Consider $G_1$
  and $G_2$ such that $G_1 \cup G_2 = G$ and $G_1 \cap G_2 = S$. Two cases can
  occur:
  \begin{itemize}
  \item $S$ is a complete graph, then the result stems
    from~\cref{cor:S_complete}.
  \item $S$ is composed of two non-adjacent vertices $u$ and $v$. If both $u$
    and $v$ have only one neighbor in $G_1$, consider $w$ the unique neighbor of
    $u$ in $G_1$. Note that $w$ is not adjacent to $v$, otherwise, $\{w\}$ would be a
    separator of $G$. Since $G$ is $k$-regular with $k \ge 3$, $w$ has
    at least two neighbors in $G_1$. As a result, we can assume that $u$ or $v$
    has at least two neighbors in $G_1$ (respectively $G_2$). We prove the two
    following claims.
    \begin{claim}
      There is a sequence \Kcgs of length at most $O(n^2)$ between any
      two $k$-colorings $\alpha$ and $\beta$ of $G$, such that $\alpha(u)
      \neq \alpha(v)$ and $\beta(u) \neq \beta(v)$.
    \end{claim}
    Let $G_1'$ (respectively $G_2'$ and $G'$) the graphs obtained from $G_1$
    (respectively $G_2$ and $G$), by adding an edge between $u$ and $v$. The
    graph $G_1'$ (respectively $G_2'$) has maximum degree $k$ and is
    $(k-1)$-degenerate because either $u$ or $v$ has at most $k-2$
    neighbors in $G_1$ (respectively $G_2'$). By~\cref{cor:S_complete},
    $R^k(G')$ has diameter $O(n^2)$. As a result, $R^k(G)$ has
    also diameter $O(n^2)$.
    \begin{claim}\cite{bonamy2019conjecture}
      Given any $k$-coloring $\alpha$ of $G$, such that $\alpha(u) =
      \alpha(v)$, there exists a $k$-coloring $\beta$ within at most 3
      \Kcgs from $\alpha$ such that $\alpha'(u) \neq \alpha'(v)$.
    \end{claim}
    Assume that $\alpha(u) = \alpha(v) = c$. If there exists a color $c'$ that
    is unused in the closed neighborhood of $u$ or $v$, say $u$, then one obtains the
    desired property after the trivial \Kcg $K_{u,c'}(\alpha)$. Henceforth,
    assume that both $u$ and $v$ have a neighbor of each color. Two cases may
    occur:
    \begin{itemize}
    \item $u$ or $v$ has at least two neighbors in both $G_1$ and $G_2$. Note
      that this case can only happen if $k \ge 4$. By symmetry, assume that
      $u$ has at least two neighbors in each of $G_1$ and $G_2$. By assumption,
      each color but one is used exactly once in the neighborhood of $u$. As a
      result, there exists $c_1 \notin \alpha(N[u] \cap G_1)$ and $c_2 \notin
      \alpha(N[u] \cap G_2)$. The $(c_1, c_2)$-\Kcn containing the neighbor of
      $u$ colored $c_2$ is fully contained in $G_1$. After performing it, the
      color $c_2$ does not appear any more in the neighborhood of $u$ and one
      can conclude in one trivial \Kcg.
    \item $u$ has only one neighbor $u'$ in $G_1$ and $v$ only one neighbor $v'$
      in $G_2$ (or the other way around). If $u'$ and $v'$ are colored
      differently, the \Kcn $K_{u',\alpha(v')}(\alpha)$ is contains neither $u$,
      $v$ nor $v'$. Therefore, after performing the corresponding \Kcg, $u'$ and
      $v'$ are both colored $c'$. Consider a third color $c''$, $u$ has no
      neighbor colored $c''$ in $G_1$ and $v$ has no neighbor colored $c''$ in
      $G_2$, so $K_{u,c''}(\widetilde\alpha)$ does not contain $v$, and after
      performing it, $u$ and $v$ are colored differently.
    \end{itemize}
  \end{itemize}
\end{proof}

\subsection{$G$ 3-connected}
In this subsection, $G$ is a 3-connected $k$-regular graph of diameter. Given a
vertex $u \in V(G)$, we say that a pair of vertices $(t_1, t_2)$ is an eligible
pair of $u$ if $t_1$ and $t_2$ are two non-adjacent neighbors of $u$ and denote
the set of such pairs $P(u)=N(u)^2 \setminus E$.

\begin{lemma}[Adapted from Lemma 4.3 in~\cite{bonamy2019conjecture}]\label{lem:main}
  Assume that there exists two vertices $u$ and $x$ and an eligible pair
  $(t_1,t_2)$ of $u$ such that for each eligible pair $(w_1,w_2)$ of $x$, there
  exist a $k$-coloring that colors $t_1$ and $t_2$ alike and $w_1$ and $w_2$
  alike. That is :
  $$\forall (w_1,w_2) \in P(x), \mathcal{C}^k_{w_1,w_2}(G) \cap
  \mathcal{C}^k_{t_1,t_2}(G) \neq \emptyset.$$ Then $\mathcal{C}^k(G)$ forms a
  single Kempe class and $R^k(G)$ has diameter $O(n^2)$.
\end{lemma}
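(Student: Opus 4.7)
The plan is to combine the identification trick of \cref{sec:delta} (centered on equation~\eqref{eq:identification}) with the bridging colorings granted by the hypothesis, routing every pair of $k$-colorings of $G$ through the common class $\mathcal{C}^k_{t_1,t_2}(G)$.

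The reusable \emph{building block} is the following: for any vertex $z$ of $G$ and any eligible pair $(v,w)\in P(z)$, the subset $\mathcal{C}^k_{v,w}(G)$ is Kempe-connected inside $R^k(G)$ with diameter $O(n^2)$. Indeed, $G_{v+w}$ is $(k-1)$-degenerate and all of its vertices except the merged one $v+w$ have degree at most $k$, so the first item of \cref{prop:blackbox} gives $\diam(R^k(G_{v+w}))=O(n^2)$. Every \Kcg in $G_{v+w}$ lifts to at most two \Kcgs of $G$ that preserve the coincidence $\widetilde\alpha(v)=\widetilde\alpha(w)$, transferring the bound to the restriction of $R^k(G)$ to $\mathcal{C}^k_{v,w}(G)$.

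Next, I would cover $\mathcal{C}^k(G)$ via the eligible pairs at $x$. Because $G$ is $k$-regular, $x$ has exactly $k$ neighbors; in any $k$-coloring they avoid the color of $x$ and therefore use at most $k-1$ colors, so pigeonhole forces two of them, $a$ and $b$, to share a color. Being identically colored, $a$ and $b$ are non-adjacent and form an eligible pair of $x$. This yields
\[\mathcal{C}^k(G)\;=\;\bigcup_{(w_1,w_2)\in P(x)} \mathcal{C}^k_{w_1,w_2}(G).\]

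To conclude, the hypothesis supplies, for each $(w_1,w_2)\in P(x)$, a coloring $\gamma_{w_1,w_2}\in \mathcal{C}^k_{w_1,w_2}(G)\cap \mathcal{C}^k_{t_1,t_2}(G)$. Given two arbitrary $k$-colorings $\alpha,\beta$ of $G$, the covering step selects eligible pairs $(w_1^\alpha,w_2^\alpha)$ and $(w_1^\beta,w_2^\beta)$ at $x$ with $\alpha\in\mathcal{C}^k_{w_1^\alpha,w_2^\alpha}(G)$ and $\beta\in\mathcal{C}^k_{w_1^\beta,w_2^\beta}(G)$. Three applications of the building block, first from $\alpha$ to $\gamma_{w_1^\alpha,w_2^\alpha}$ inside $\mathcal{C}^k_{w_1^\alpha,w_2^\alpha}(G)$, then from $\gamma_{w_1^\alpha,w_2^\alpha}$ to $\gamma_{w_1^\beta,w_2^\beta}$ inside $\mathcal{C}^k_{t_1,t_2}(G)$, and finally from $\gamma_{w_1^\beta,w_2^\beta}$ to $\beta$ inside $\mathcal{C}^k_{w_1^\beta,w_2^\beta}(G)$, produce a sequence of total length $O(n^2)$ between $\alpha$ and $\beta$, establishing both claims. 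The only delicate point is the faithful lift of \Kcgs from the contracted graphs back to $G$, but this is exactly the content of the discussion preceding~\eqref{eq:identification}, so I do not expect a genuine obstacle.
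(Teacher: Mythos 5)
Your proposal is correct and follows essentially the same route as the paper: use $k$-regularity to find a monochromatic (hence eligible) pair at $x$ for each of $\alpha$ and $\beta$, move each to the bridging coloring in $\mathcal{C}^k_{w_1,w_2}(G)\cap\mathcal{C}^k_{t_1,t_2}(G)$ supplied by the hypothesis via \cref{prop:blackbox} applied to the contracted graph, and connect the two bridges inside $\mathcal{C}^k_{t_1,t_2}(G)$ the same way, for three $O(n^2)$ legs. The only cosmetic difference is that you spell out the lifting of \Kcgs from $G_{v+w}$ back to $G$, which the paper delegates to the discussion preceding \cref{eq:identification}.
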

\begin{proof}
  Let $\alpha$ and $\beta$ be two colorings of $G$. Since $G$ is $k$-regular,
  there exists an eligible pair $(w_1,w_2)$ (respectively $(w_3, w_4)$) of $x$
  that is colored identically by $\alpha$ (respectively $\beta$). By assumption,
  there exists a $k$-coloring $\alpha' \in \mathcal{C}^k_{w_1,w_2}(G) \cap
  \mathcal{C}^k_{t_1,t_2}(G)$ and $\beta' \in \mathcal{C}^k_{w_3,w_4}(G) \cap
  \mathcal{C}^k_{t_1,t_2}(G)$. By applying~\cref{prop:blackbox} on $G_{w_1+w_2}$
  (respectively $G_{w_3,w_4}$), there exists a sequence of $O(n^2)$ Kempe
  changes between $\alpha$ and $\alpha'$ (respectively $\beta$ and
  $\beta'$). Finally by applying~\cref{prop:blackbox} in $G_{t_1+t_2}$, we get
  that $\alpha'$ is Kempe equivalent to $\beta'$ up to $O(n^2)$ Kempe changes.
  As a result, $\alpha$ and $\beta$ are equivalent up to $O(n^2)$ Kempe changes.
\end{proof}

\begin{lemma}[Adapted from Lemma 4.4 in~\cite{bonamy2019conjecture}]\label{lem:sepclique}
  If $G$ admits a vertex cut $S$ of size 3 such that one of the components of $G
  \setminus S$ is isomorphic to $K_k$. Then $R^k(G)$ has diameter at
  most $O(n^2)$.
\end{lemma}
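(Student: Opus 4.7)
The plan is to verify the hypothesis of~\cref{lem:main} with a judicious choice of triple $(u,x,(t_1,t_2))$, exploiting the rigidity of the $K_k$ component. Let $C\cong K_k$ be the clique component of $G\setminus S$. Since $G$ is $k$-regular and $C$ already contributes $k-1$ neighbours to each of its vertices, each $c\in C$ has exactly one neighbour in $S$; the $3$-connectivity of $G$ guarantees each $s_i\in S$ has at least one neighbour in $C$. Hence $V(C)$ partitions into non-empty classes $C_1,C_2,C_3$ indexed by $S=\{s_1,s_2,s_3\}$, with $1\le|C_i|\le k-2$, which provides enough slack when assigning colours to $C$.

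Pick $u\in C_1$, whose only non-$C$ neighbour is $s_1$, and select $c\in C_2\cup C_3$ (such a $c$ exists because $|C_1|\le k-2$); set $(t_1,t_2)=(s_1,c)$, which is a valid eligible pair of $u$ since $s_1\not\sim c$. For the vertex $x$ required by~\cref{lem:main}, take $x=s_2$ (the corner case $V(G)=V(C)\cup S$ is handled separately by direct inspection since then $n=k+3$). I claim that for every eligible pair $(w_1,w_2)\in P(s_2)$ the set $\mathcal{C}^k_{w_1,w_2}(G)\cap\mathcal{C}^k_{s_1,c}(G)$ is non-empty. Granting this, \cref{lem:main} immediately yields $\diam(R^k(G))=O(n^2)$.

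To construct the required colouring $\psi$, I work in two stages. First, I build a $k$-colouring of $G\setminus V(C)$ in which $w_1$ and $w_2$ share a colour: this subgraph is $k$-colourable (inherited from $G$), each $s_i$ has lost at least one neighbour so has degree strictly less than $k$, and the flexibility to force $\psi(w_1)=\psi(w_2)$ comes from identifying $w_1$ and $w_2$ and invoking~\cref{lem:vergnas} on the resulting smaller instance, or alternatively from global colour permutations on connected components combined with local trivial Kempe swaps. Second, I extend $\psi$ to $C$: a proper extension corresponds to a bijection $V(C)\to[k]$ constrained only by the rule that each $c'\in C_j$ avoid the colour $\psi(s_j)$. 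Because $c\in C_2\cup C_3$ forbids only $\psi(s_2)$ or $\psi(s_3)$, both distinct from $\psi(s_1)$ in a proper colouring, one can arrange $\psi(c)=\psi(s_1)$.

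The main obstacle is the first stage: guaranteeing that a proper $k$-colouring of $G\setminus V(C)$ with $\psi(w_1)=\psi(w_2)$ exists, since the degeneracy of $G\setminus V(C)$ is only partially controlled by the removal of $C$ and one must handle the cases where $w_1$ or $w_2$ lies in $S$ (possibly constraining $\psi(s_1)$ and interacting with the extension to $C$). Once a suitable $\psi$ is exhibited in each subcase,~\cref{lem:main} delivers the stated $O(n^2)$ diameter bound and the lemma follows.
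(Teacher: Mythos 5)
Your approach is genuinely different from the paper's, but it has a fatal gap: the central claim --- that for \emph{every} eligible pair $(w_1,w_2)\in P(s_2)$ one has $\mathcal{C}^k_{w_1,w_2}(G)\cap\mathcal{C}^k_{s_1,c}(G)\neq\emptyset$ --- is not only unproven, it is false in general. The hypothesis of \cref{lem:main} quantifies over \emph{all} eligible pairs of $x$, and nothing prevents an eligible pair of $s_2$ from being incompatible with $(s_1,c)$. Concretely, suppose $c\in C_2$ and $s_1$ and $s_2$ have a common neighbour $z$ outside $C$ (3-connectivity does not forbid this). Then $(c,z)\in P(s_2)$, since $z\ne s_2$, $z\notin C$, and all neighbours of $c$ lie in $C\cup\{s_2\}$; yet any colouring with $\psi(c)=\psi(z)$ and $\psi(c)=\psi(s_1)$ forces $\psi(z)=\psi(s_1)$ with $z\sim s_1$, so the intersection is empty. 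Choosing $c\in C_3$ instead does not escape: if $s_1\sim s_2$, take any $y\in C_2$; then $(s_1,y)\in P(s_2)$, and $\psi(s_1)=\psi(y)$ together with $\psi(s_1)=\psi(c)$ forces $\psi(y)=\psi(c)$ with $y,c$ adjacent in the clique $C$. This is exactly the obstruction that \cref{lem:config} is designed to detect, and handling it leads into \cref{prop:diam3}, which itself invokes the present lemma --- so patching your argument that way would be circular. Two secondary issues: \cref{lem:vergnas} gives Kempe equivalence, not \emph{existence} of a colouring identifying $w_1$ and $w_2$, so stage one needs a degeneracy/colourability argument; and the extension to $C$ is not ``constrained only'' by the rule you state --- if $\psi(s_1)=\psi(s_2)=\psi(s_3)$ the $K_k$ cannot be extended at all, and the eligible pair of $s_2$ may itself meet $C_2$, which breaks the two-stage separation.

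The paper takes a much more economical route that sidesteps \cref{lem:main} entirely. It picks a single vertex $u\in S$ with at least two neighbours in $C$, a neighbour $w_1$ of $u$ inside $C$ and a neighbour $w_2$ of $u$ outside $C$ (the latter exists by 3-connectivity), and shows that \emph{any} colouring can be brought into $\mathcal{C}^k_{w_1,w_2}(G)$ with at most two \Kcgs, exploiting the rigidity of the clique $C$ (the relevant bichromatic components inside $C$ are single edges). Then \cref{prop:blackbox} applied to $G_{w_1+w_2}$ gives diameter $O(n^2)$ within $\mathcal{C}^k_{w_1,w_2}(G)$, and the lemma follows. The lesson is that you only need one well-chosen pair that every colouring can cheaply reach, not simultaneous realisability against all pairs of a second vertex.
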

\begin{proof}
  Let $C$ be the aforementioned component. Since $G$ is $k$-regular, each vertex
  of $C$ has exactly one neighbor in $S$ and all the others in $C$, namely, $S$
  weakly dominates $V(C)$. $|S| = 3$ and $d \ge 4$, hence at least one vertex
  $u$ of $S$ is adjacent to two or more vertices of $C$. Let $w_1$ be a neighbor
  of $u$ in $C$. Note that $S \setminus \{u\}$ is not a vertex cut of $G$,
  otherwise $G$ would not be 3-connected, therefore there exists a neighbor
  $w_2$ of $u$ that does not belong to $C$. By~\cref{prop:blackbox}, $R^k(G_{w_1+
    w_2})$ has diameter $O(n^2)$ and so does $R^k_{w_1,w_2}(G)$. Thus, if we
  prove that from any coloring $\alpha$ of $G$, one can reach a coloring where
  $w_1$ and $w_2$ are colored alike with a bounded number of moves, then we
  obtain $\diam(R^k(G)) = O(n^2)$.

  Let $\alpha$ be a $k$-coloring of $G$ such that $\alpha(w_1) \neq
  \alpha(w_2)$. Since $C$ is a clique on $d$ vertices, one of them is colored
  $\alpha(w_2)$, say $w_3$. Two cases may occur:
  \begin{itemize}
  \item If $w_3$ is adjacent to $u$, then it is not adjacent to any other vertex
    of $S$ and $\{w_1, w_3\}$ forms a \Kcn. After performing the corresponding
    \Kcg, $w_1$ and $w_2$ are colored alike.
  \item Otherwise, let $v$ be the neighbor of $w_3$ in $S$. From what precedes,
    $u$ has at least two neighbors in $C$ so one of them, say $w_4$, is colored
    differently from $v$ (it is possible that $w_4 = w_1$). Then $\{w_3, w_4\}$
    forms a \Kcn and after performing the corresponding move, either we had $w_4
    = w_1$ thus $w_1$ and $w_2$ are now colored alike, or $w_4$ and $w_1$ were
    distinct and we are now in the first case.
  \end{itemize}
  This proves that any coloring of $G$ is within at most 2 \Kcgs of a coloring
  in which $w_1$ and $w_2$ are colored alike, thus $\diam(R^k(G)) = O(n^2)$ by
  applying~\cref{prop:blackbox}.
\end{proof}

\begin{lemma}[Adapted from Lemma 4.5 in~\cite{bonamy2019conjecture}]\label{lem:config}
  Let $u, v$ be two vertices of $G$ and $(w_1,w_2)$ be an eligible
  pair in $P(v)$ such that neither $w_1$ nor $w_2$ is adjacent to $u$. Assume
  that there exists an eligible pair $(t_1,t_2)$ in $P(u)$ such that there is no
  $k$-coloring of $G$ that colors $w_1$ and $w_2$ alike and $t_1$ and $t_2$
  alike. Then $G$ contains a subgraph weakly dominated by both $\{t_1,t_2\}$ and
  $\{w_1, w_2\}$, that is isomorphic $K_{k-1}$.
\end{lemma}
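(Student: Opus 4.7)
My plan is to argue by contrapositive: assuming $G$ contains no $K_{k-1}$ weakly dominated by both $\{t_1,t_2\}$ and $\{w_1,w_2\}$, I will produce a $k$-coloring $\alpha$ of $G$ with $\alpha(t_1)=\alpha(t_2)$ and $\alpha(w_1)=\alpha(w_2)$. Identifying $t_1$ with $t_2$ into a single vertex $t$ and $w_1$ with $w_2$ into a single vertex $w$ yields the graph $G^*:=G_{t_1+t_2,\,w_1+w_2}$, and such an $\alpha$ exists if and only if $G^*$ is $k$-colorable. So the statement reduces to showing: if $G^*$ is not $k$-colorable, the desired $K_{k-1}$ exists in $G$.

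I would first record the relevant degeneracy structure. In $G^*$, every vertex other than $t,w$ has degree at most $k$, while the two merged vertices have degree at most $2k-1$ (since $u\in N_G(t_1)\cap N_G(t_2)$ and $v\in N_G(w_1)\cap N_G(w_2)$). Crucially, $G^*\setminus\{w\}$ is a subgraph of $G_{t_1+t_2}$ and $G^*\setminus\{t\}$ is a subgraph of $G_{w_1+w_2}$; by the observation underlying~\cref{fig:contraction}, both $G_{t_1+t_2}$ and $G_{w_1+w_2}$ are $(k-1)$-degenerate. In particular neither $G^*\setminus\{w\}$ nor $G^*\setminus\{t\}$ can contain $K_{k+1}$ (which is not $(k-1)$-degenerate), so any $K_{k+1}$-subgraph of $G^*$ must contain both $t$ and $w$.

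The heart of the argument is a Brooks-type statement: non-$k$-colorability of $G^*$ forces a $K_{k+1}$. Brooks' theorem does not apply directly, since $t$ and $w$ may have degree exceeding $k$. Instead I would mimic the Kempe-chain proof of Brooks on $G^*$: starting from a $k$-coloring of $G^*\setminus\{w\}$ (which is $k$-colorable as a $(k-1)$-degenerate subgraph, via~\cref{prop:blackbox}), attempt to extend to $w$ by performing Kempe changes to free one color at $w$; symmetrically for $t$. If these extensions always fail, the obstructions force $N_{G^*}(t)$ and $N_{G^*}(w)$ to each contain a $K_k$, and a simultaneous analysis across both merged vertices forces these two $K_k$'s to share $k-1$ common vertices, producing a $K_{k+1}$ on $\{t,w\}$ together with this shared clique $Q$.

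Lifting $Q$ back to $G$, its vertices are distinct from $t_1,t_2,w_1,w_2$ and still induce a $K_{k-1}$ in $G$; each vertex of $Q$ is adjacent to $t$ in $G^*$ hence to some element of $\{t_1,t_2\}$ in $G$, and similarly adjacent to some element of $\{w_1,w_2\}$. This is exactly the weakly-dominated $K_{k-1}$ the lemma demands. The main obstacle is the third paragraph: the two high-degree identified vertices prevent a direct use of Brooks, and the delicate step is to run the Kempe-chain freeing argument simultaneously at $t$ and $w$ so as to pin down the shared $K_{k-1}$. This parallels the structural analysis of Lemma~4.5 of~\cite{bonamy2019conjecture}, adapted by substituting~\cref{prop:blackbox} for the original Las~Vergnas--Meyniel lemma; since only the structural conclusion is needed here, no new counting work is required.
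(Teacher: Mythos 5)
There is a genuine gap, and it sits exactly where the entire content of the lemma lives. For the record, the paper does not reprove this statement at all: its ``proof'' consists of observing that the proof of Lemma~4.5 in~\cite{bonamy2019conjecture} never uses the full hypothesis that $\mathcal{C}^k(G)$ is not a single Kempe class, only the weaker hypothesis that $\mathcal{C}^k_{w_1,w_2}(G)\cap\mathcal{C}^k_{t_1,t_2}(G)=\emptyset$, and hence can be invoked verbatim. Your setup (reduce to $k$-colorability of $G^\ast=G_{t_1+t_2,\,w_1+w_2}$, note that $G^\ast\setminus\{w\}$ and $G^\ast\setminus\{t\}$ are $(k-1)$-degenerate so any $K_{k+1}$ in $G^\ast$ must contain both merged vertices, and lift a putative $K_{k+1}$ back to the weakly dominated $K_{k-1}$) is sound as far as it goes, but the third paragraph — ``non-$k$-colorability of $G^\ast$ forces a $K_{k+1}$'' via a ``Brooks-type'' Kempe-chain argument whose obstructions ``force'' two $K_k$'s sharing $k-1$ vertices — is an assertion, not a proof. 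That assertion is the whole lemma; everything surrounding it is routine bookkeeping. Your closing remark that the argument ``parallels the structural analysis of Lemma 4.5 of~\cite{bonamy2019conjecture}\dots no new counting work is required'' effectively concedes that the hard step is being outsourced to the very lemma you set out to prove.

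Worse, the intermediate statement you aim for is strictly stronger than the lemma's conclusion and is not justified by the degree profile of $G^\ast$ alone. A $K_{k+1}$ in $G^\ast$ through $t$ and $w$ requires $tw\in E(G^\ast)$, i.e.\ some $t_i$ adjacent to some $w_j$ in $G$; the lemma's conclusion (a $K_{k-1}$ weakly dominated by both pairs) requires no such adjacency, and indeed when $tw\notin E(G^\ast)$ that clique plus $\{t,w\}$ is $K_{k+1}$ minus an edge, which is $k$-colorable — so the obstruction to coloring $G^\ast$ need not be a $K_{k+1}$ at all, and your contrapositive may be attacking a false target. Moreover, no off-the-shelf Brooks-type theorem applies to a graph with two vertices of degree up to $2k-1$: for $k=4$, an odd cycle together with two universal vertices matches the degree constraints you record for $G^\ast$ (all but two vertices of degree at most $k$), is not $4$-colorable, and contains no $K_5$. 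Ruling out such configurations requires using the $k$-regularity and $3$-connectivity of $G$ and the specific Kempe-chain case analysis of the original Lemma~4.5 — precisely the part your sketch omits. To make the write-up honest, either carry out that analysis in the $G^\ast$ framework, or do what the paper does and simply verify that the published proof uses only the weaker hypothesis.
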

\begin{proof}
  The proof of Lemma 4.5 presented in~\cite{bonamy2019conjecture}, results in
  the same outcome by assuming the stronger hypothesis that $\mathcal{C}^k(G)$
  does not form a single Kempe class, but it only uses the fact that there is no
  $k$-coloring of $G$ that colors $w_1$ and $w_2$ alike and $t_1$ and $t_2$
  alike, therefore we can use it directly.
\end{proof}

\begin{proposition}[Adapted from Lemma 4.6 in~\cite{bonamy2019conjecture}]\label{prop:diam3}
  Assume that there are two non-adjacent vertices $u, v$ of $G$ and an eligible
  pair $(w_1, w_2)$ in $P(v)$ such that neither $w_1$ nor $w_2$ is adjacent to
  $u$ -- note that this is in particular the case in graphs of diameter at least
  3. Then $C^k(G)$ forms a single Kempe class and $R^k(G)$ has diameter
  $O(n^2)$.
\end{proposition}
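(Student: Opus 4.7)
The plan is to apply~\cref{lem:main} with roles swapped: set its ``$u$'' to our $v$, its ``$x$'' to our $u$, and its eligible pair ``$(t_1,t_2)$'' to our $(w_1,w_2) \in P(v)$. Under this identification, the conclusion of~\cref{lem:main} is precisely the conclusion of~\cref{prop:diam3}, provided its hypothesis holds---namely, that for every eligible pair $(t_1,t_2) \in P(u)$ there exists a $k$-coloring $\gamma$ of $G$ with $\gamma(w_1)=\gamma(w_2)$ and $\gamma(t_1)=\gamma(t_2)$.

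Suppose this fails for some $(t_1,t_2) \in P(u)$. Since by assumption $w_1,w_2 \notin N(u)$,~\cref{lem:config} applies and produces a subgraph $H \cong K_{k-1}$ weakly dominated by both $\{w_1,w_2\}$ and $\{t_1,t_2\}$; these two pairs are disjoint because $t_1,t_2 \in N(u)$ while $w_1,w_2 \notin N(u)$. Each $h \in V(H)$ has degree $k$ in $G$ and degree $k-2$ in $H$, hence exactly two external neighbors, giving $2(k-1)$ edges leaving $H$ in total. The two weak dominations force at least $k-1$ of these edges to land in each of the disjoint pairs $\{w_1,w_2\}$ and $\{t_1,t_2\}$, so both bounds are tight: every $h \in V(H)$ has exactly one neighbor in each pair, and no further external neighbor. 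In particular $u \notin V(H)$ and $v \notin V(H)$, since each of $u,v$ is adjacent to both vertices of one of the two pairs.

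From this rigid picture the plan is to exhibit a $3$-vertex separator of $G$ whose removal isolates a $K_k$ component, so that~\cref{lem:sepclique} gives the desired diameter bound. In the favorable case where, say, $w_2$ is adjacent to all of $V(H)$, the subgraph induced on $V(H)\cup\{w_2\}$ is a $K_k$; then $w_2$ has exactly one neighbor outside this clique, which must be $v$, so that $\{v,t_1,t_2\}$ is the required $3$-separator. A symmetric argument handles the case where one of $t_1,t_2$ dominates $V(H)$. The main obstacle will be the remaining ``bipartite'' case where both $w_1$ and $w_2$, and both $t_1$ and $t_2$, have at least one neighbor in $H$: there the plan is to re-invoke~\cref{lem:config} on a second, appropriately chosen eligible pair of $P(u)$ to produce another $K_{k-1}$ subgraph whose outgoing-edge requirements (combined with those of $H$) over-saturate $\{w_1,w_2\}$ beyond what $k$-regularity permits, yielding the required contradiction and completing the reduction to~\cref{lem:main}.
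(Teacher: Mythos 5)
Your opening is sound and matches the paper's strategy: you apply \cref{lem:main} with the roles swapped exactly as the paper does, invoke \cref{lem:config} when some pair $(t_1,t_2)\in P(u)$ violates its hypothesis, and your degree count (each vertex of the $K_{k-1}$ has exactly one neighbor in each of the disjoint pairs $\{t_1,t_2\}$ and $\{w_1,w_2\}$ and no other external neighbor, hence $u,v\notin V(H)$) is correct and is also how the paper proceeds. The case where one vertex of one pair has no neighbor in $H$, handled via a $3$-separator and \cref{lem:sepclique}, is likewise the paper's argument.

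The gap is the last case, which is precisely where the work lies, and your plan for it does not go through as stated. First, you cannot simply ``re-invoke \cref{lem:config} on a second eligible pair of $P(u)$'': that lemma requires $\mathcal{C}^k_{w_1,w_2}(G)\cap\mathcal{C}^k_{t_3,t_4}(G)=\emptyset$ for the chosen pair, and nothing guarantees that a \emph{second} violating pair exists --- a single violating pair $(t_1,t_2)$ already blocks \cref{lem:main}, so you may be left with exactly one clique $H$ and no contradiction. Second, even if a second disjoint clique $H'$ weakly dominated by $\{w_1,w_2\}$ did exist, the edge count into $\{w_1,w_2\}$ would be $(k-1)+(k-1)$ from $H\cup H'$ plus $2$ from $v$, i.e.\ exactly $2k=\deg(w_1)+\deg(w_2)$: saturated, not over-saturated, so $k$-regularity is not violated. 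The paper's resolution uses a different pivot that your sketch misses: pick a common neighbor $x\in V(H)$ of $t_1$ and $w_1$, note that $(x,v)$ is an eligible pair of $w_1$ with neither $x$ nor $v$ adjacent to $u$ (so the proposition's hypothesis holds again with $w_1$ playing the role of $v$), and either conclude by \cref{lem:main} for this new pair or apply \cref{lem:config} to $(x,v)$ versus some $(t_3,t_4)\in P(u)$ to get a second clique $C'$ weakly dominated by $\{x,v\}$; the contradiction then comes from how $C'$ must intersect $C\cup\{t_1,t_2,w_1,w_2,u,x,v\}$, not from a degree count at $\{w_1,w_2\}$. Without this (or an equivalent) idea, the reduction to \cref{lem:main} is not complete.
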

\begin{proof}
  If for all eligible pair $(t_1,t_2)$ in $P(u)$, $\mathcal{C}^k_{t_1, t_2}(G)
  \cap \mathcal{C}^k_{w_1,w_2}(G) \neq \emptyset$, then the results holds
  by~\cref{lem:main}.
  Otherwise, there exist an eligible
  pair $(t_1,t_2)$ in $P(u)$ such that $\mathcal{C}^k_{w_1,w_2}(G) \cap
  \mathcal{C}^k_{t_1,t_2}(G) = \emptyset$. By~\cref{lem:config}, $G$ contains a
  subgraph $C$ isomorphic to $K_{k-1}$ and weakly dominated by both
  $\{t_1,t_2\}$ and $\{w_1,w_2\}$. Each vertex of $C$ is adjacent to the $k-2$
  others and to one vertex of $\{t_1,t_2\}$ and one of $\{w_1,w_2\}$. The clique
  $C$ does not contain $u$ nor $v$ since they are adjacent to both $\{t_1,t_2\}$
  and $\{w_1,w_2\}$.

  At least three vertices of $\{t_1, t_2, w_1, w_2\}$ are adjacent to at least
  one vertex of $C$, otherwise $G$ is not 3-connected. If exactly one vertex,
  say $t_1$, is not adjacent to a vertex in $C$, then $\{u, w_1, w_2\}$ is a cut
  set between the clique $C \cup \{t_2\}$ and the rest of the graph, so one can
  apply~\cref{lem:sepclique}. We will show that this is the only possible case,
  thereby completing the proof.

  Assume towards contradiction that all the vertices of $\{t_1, t_2, w_1, w_2\}$
  are adjacent to at least one vertex of $C$. Assume, without loss of generality
  that $w_1$ has at least as many neighbors in $C$ as $w_2$. Without loss of
  generality, assume that $t_1$ and $w_1$ have a common neighbor $x$ in
  $C$. Then $(x,v)$ is a eligible pair of $w_1$, such that neither $w_1$, nor
  $x$, nor $v$ is adjacent to $u$. If for all eligible pair $(t_3,t_4)$ in
  $P(u)$, $\mathcal{C}^k_{t_3, t_4}(G) \cap \mathcal{C}^k_{x,v}(G) \neq
  \emptyset$, we can once again conclude by~\cref{lem:main}.

  Otherwise, there exists an eligible pair $(t_3,t_4)$ of $u$ such that
  $\mathcal{C}^k_{x,v}(G) \cap \mathcal{C}^k_{t_3,t_4}(G) =
  \emptyset$. By~\cref{lem:config}, there exists a $(k-1)$-clique $C'$ in $G$
  weakly dominated by both $\{t_3,t_4\}$ and $\{x,v\}$. With the same reasoning,
  each of the vertices $\{t_3, t_4, x, v\}$ is adjacent to at least one vertex
  of $C'$.

  Assume towards contradiction that neither $t_1$ nor $t_2$ belong to $C'$. Then
  $N(x) = C \setminus \{x\} \{t_1,w_1\}$ and at least one neighbor of $x$
  belongs to $C$. By assumption, $t_1 \notin C'$ and $w_1$ neither since it is
  adjacent to both $x$ and $v$. So there is a vertex $y \neq x \in C \cup C'$
  and $(k-2)$ other neighbors of $y$ belong to $C'$. As none of $\{t_1, t_2,
  w_1, x\}$ belong to $C'$, we have $C' = C \setminus\{x\} \cup \{w_2\}$ so
  $w_2$ is adjacent to all the vertices of $C \setminus\{x\}$. By assumption,
  $w_1$ has at least as many neighbors in $C$ as $w_2$ so $C$ must be a clique
  on 2 vertices, and $k = 3$ which is a contradiction and proves that, exactly
  one of $t_1$ and $t_2$ belong to $C'$ ($t_1$ and $t_2$ are non-adjacent by
  definition).

  For $i \in \{1,2\}$, if $t_i$ belongs to $C'$, then it has $k-2$ neighbors in
  $C'$ and two other neighbors. By definition, $t_i$ is adjacent $u$ and to one
  of $\{x,v\}$. But neither of $t_3$ and $t_4$ belongs to $C' \cup \{u, x, v\}$
  which contradicts the definition of $C'$ and proves that $\{t_1, t_2, w_1,
  w_2\}$ cannot all have at least one neighbor in $C$. This concludes the proof.
\end{proof}

We denote $N^2(u)$ the \emph{second neighborhood} of a vertex $u$, that is the
set of vertices at distance exactly two from $u$.

\begin{proposition} Let $G$ be a 3-connected $k$-regular graph of diameter at
  most 2, with $k \ge 4$. Then $\mathcal{C}^k(G)$ forms a single Kempe class and
  $\diam(R^k(G)) )= O(n^2)$.
\end{proposition}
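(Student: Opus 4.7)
The plan is to mirror the structure of \cref{prop:diam3} and reduce the diameter-2 case to either \cref{lem:main} applied to a carefully chosen pair of vertices, or \cref{lem:sepclique} when a clique-separator can be exhibited. Because $G$ is $k$-colorable, $k$-regular and $k \geq 4$, it cannot be $K_{k+1}$, so $|V(G)| \geq k+2$ and the second neighborhood $N^2(u)$ is non-empty for every vertex $u$. I would fix a vertex $u$ and pick $x \in N^2(u)$; by the diameter-2 hypothesis, $u$ and $x$ are non-adjacent and share at least one common neighbor.

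I would then attempt to verify the hypothesis of \cref{lem:main} with this choice of $u$ and $x$, namely the existence of an eligible pair $(t_1, t_2) \in P(u)$ such that $\mathcal{C}^k_{t_1,t_2}(G) \cap \mathcal{C}^k_{w_1,w_2}(G) \neq \emptyset$ for every $(w_1, w_2) \in P(x)$. If this succeeds, \cref{lem:main} immediately produces the bound $O(n^2)$. Otherwise, for every $(t_1, t_2) \in P(u)$ there is a witness $(w_1, w_2) \in P(x)$ realising an empty intersection. Whenever the witness satisfies $w_1, w_2 \notin N(u)$, \cref{lem:config} provides a $(k-1)$-clique $C$ weakly dominated by both $\{t_1,t_2\}$ and $\{w_1,w_2\}$. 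The same case analysis as in the proof of \cref{prop:diam3} then applies: either exactly one of $\{t_1,t_2,w_1,w_2\}$ is non-adjacent to $C$, in which case $G$ contains a $3$-vertex cut that isolates a copy of $K_k$ and \cref{lem:sepclique} closes the case, or iterating the same argument with a common neighbor of $u$ and $x$ as a new anchor forces a contradiction using $k \geq 4$.

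The main obstacle is the scenario in which every witness pair $(w_1, w_2) \in P(x)$ meets $N(u)$, so that \cref{lem:config} cannot be invoked directly. This situation is made plausible by the diameter-2 assumption, which forces many neighbors of $x$ to lie in $N(u)$. The resolution is to exploit $3$-connectivity to reselect an eligible pair inside $N(x) \setminus N[u]$, or to swap the roles of $u$ and $x$ and re-anchor on a vertex with a richer second neighborhood. A finite case analysis, driven by the rigidity of having $n \geq k+2$ and $k \geq 4$, shows that we can always reduce to a setting where \cref{lem:config} applies. Once this case split is complete, each branch yields the target bound via \cref{lem:main} and \cref{lem:sepclique} or, when needed, via the contraction argument of \cref{eq:identification} combined with \cref{prop:blackbox}.
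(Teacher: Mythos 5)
There is a genuine gap, and it sits exactly where you flag ``the main obstacle.'' Your first branch --- when some non-adjacent pair $u,x$ admits an eligible pair $(w_1,w_2)\in P(x)$ with $w_1,w_2\notin N(u)$ --- is fine, but it is literally the hypothesis of \cref{prop:diam3}, so that branch is already covered and contributes nothing new. The entire content of this proposition is the complementary case, where for \emph{every} choice of non-adjacent $u,x$ and every eligible pair of $x$, at least one member of the pair lies in $N(u)$. You claim that ``a finite case analysis shows that we can always reduce to a setting where \cref{lem:config} applies,'' but this is asserted, not proved, and it is not how the obstruction is actually resolved. In that case \cref{lem:config} is simply unavailable (its hypothesis requires $w_1,w_2\notin N(u)$), and no reselection inside $N(x)\setminus N[u]$ or role-swap is shown to restore it; indeed the failure of the hypothesis for all choices is a global structural property of the graph, not an artifact of a bad choice of anchor.

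The paper turns that failure into structure: if no such configuration exists, then the second neighborhood of every vertex contains no induced path on three vertices, hence is a disjoint union of cliques. It then splits on whether $N^2(v)$ has several cliques (in which case two vertices in different cliques have identical neighborhoods in $N(v)$, can be made monochromatic by one Kempe change, and one concludes via \cref{prop:blackbox}) or a single clique $C$, and the single-clique case requires a further delicate argument with ``locked'' vertices and a final split on $|C|\ge 3$ versus $|C|=2$, where $k\ge 4$ is used to derive a contradiction. None of this appears in your proposal, and it cannot be replaced by an appeal to \cref{lem:config} or \cref{lem:sepclique}. To repair the proof you would need to supply an argument for the obstruction case from scratch --- the reduction you describe does not exist.
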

\begin{proof}
  If $G$ is of diameter less than 2, the result is obvious, thus we can assume
  that $\diam(G) = 2$. If there exists two non-adjacent vertices $u,v$ in $G$ and
  an eligible pair $(w_1,w_2)$ in $P(v)$, such that neither $w_1$ nor $w_2$ is
  adjacent to $u$, then the results holds by~\cref{prop:diam3}. Otherwise, the
  second neighborhood of any vertex can contain no path on three vertices, thus
  is a collection of disjoint cliques.

  Assume that the second neighborhood of a vertex $v$ consists of at least two
  disjoint cliques $C_1$ and $C_2$. Let $x$ and $y$ be two vertices of $C_1$ and
  $C_2$ respectively. If $x$ is adjacent to a neighbor $z$ of $v$ that is not
  adjacent to $y$, then $\{x, y, z\}$ induce a path in the second neighborhood
  of $y$, and one can conclude by~\cref{prop:diam3}. As a result, we can assume
  that $N(x) \cap N(v) = N(y) \cap N(v)$. By repeating this argument for all of
  pair of vertices in $C_1 \times C_2$, we obtain that all the vertices in $C_1
  \cup C_2$ have the same set of neighbors in $N(v)$. Given a coloring $\alpha$
  of $G$, if $x$ and $y$ have distinct colors, say 1 and 2, the $\{1, 2\}$ Kempe
  chain containing $x$ cannot contain any vertices of $C_2$, thus, after
  performing it, $x$ and $y$ are colored identically. Since they have at least
  one common neighbor, we can conclude by~\cref{prop:blackbox}.

  Therefore, we can assume without loss of generality that the second
  neighborhood of each vertex consist in a single clique. Let $v \in V$ and let
  $\alpha$ and $\beta$ be two colorings of $G$. Denote by $C$ the second
  neighborhood of $v$. Up to a Kempe change, one can assume that $\alpha(v) =
  \beta(v) = 1$.

  If the color 1 is not used by $\alpha$ in $C$, then let $x$ be the vertex of
  $C$ colored 1 in $\beta$ (or any vertex in $C$ if the color is also not used
  by $\beta$ in $C$). The only vertex of $C$ in the Kempe chain $K_{x,1}(\alpha,
  G)$ is $x$, and as no vertices of $N(v)$ can be colored 1, $K_{x,1}(\alpha,
  G)$ is a trivial Kempe change. We can apply it to color identically $x$ and
  $v$ with color 1. If no vertex of $C$ is colored 1 by $\beta$, we can recolor
  $x$ with 1 using the same argument, without changing the color of
  $v$. By~\cref{prop:blackbox}, the two colorings obtained are equivalent up to
  at most $O(n^2)$ Kempe changes.

  Therefore, one can assume without loss of generality that there exists two
  vertices $u,w$ in $C$ such that $\alpha(u) = \beta(w) = 1$. Once again if $u =
  w$, we can conclude by~\cref{prop:blackbox}, so one can assume that $u \neq
  w$. We will require the following definition: a vertex $x$ is said to be
  \emph{locked} in $\gamma$ if all the colors are present in $N(x)$, making a
  trivial Kempe change on $x$ impossible.

  \begin{claim}\label{cl:locked}
    If any of the vertices in $N[u] \setminus\{w\}$ is not locked or if
    the color $\alpha(w)$ does not appear twice in the neighborhood of $u$,
    then $R^k(G)$ is connected and of diameter $O(n^2)$.
  \end{claim}
  If $u$ is not locked, then a trivial Kempe change on $u$ removes the color 1
  from $C$ and we can conclude with the aforementioned argument. So one can
  assume that $u$ is locked. If $v \notin K_{u, \alpha(w)}(\alpha,G)$, then
  performing the corresponding Kempe change yields a coloring in which both $w$
  and $v$ are colored 1 and since this is also the case in $\beta$, one can
  conclude with~\cref{prop:blackbox}. Assume that $v \in K_{u,
    \alpha(w)}(\alpha,G)$, as all the vertices are adjacent to $u$ or $v$, $w$
  cannot be adjacent to any vertex colored 1 other than $u$, so there must be a
  vertex $y$ adjacent to both $v$ and $u$ colored $\alpha(w)$. To sum up, all
  the colors except color 1 are present in the neighborhood of $u$, and the
  color $\alpha(w)$ is used by both $w$ and $y$.

  If $y$ is not locked, one can perform a trivial Kempe change on $y$, after
  which $v \notin {K_{u,\alpha(w)}(\widetilde\alpha, G)}$ and we are back it the
  previous situation. If any vertex of $N(u) \setminus \{w, y\}$ is not locked,
  after a trivial Kempe change, $u$ is not locked anymore.

  This proves the claim. From now on, we assume that $v$ belongs to $K_{u,
    \alpha(w)}(\alpha,G)$, that the vertices of $N[u] \setminus \{w\}$ are
  locked in $\alpha$, and that the color $\alpha(w)$.  We make the symmetric
  assumptions for $\beta$. To conclude this proof, we discuss the two following
  cases :
  \begin{itemize}
  \item If $|C| \ge 3$, let $z \in C \setminus \{u,w\}$. Each vertex in $G$ is
    adjacent to $u$ or $v$, so $u$ is the only neighbor of $w$ colored 1 in
    $\alpha$. Likewise, $w$ is the only neighbor of $u$ colored 1 in $\beta$. By
    assumption, $z$ is the only neighbor of $u$ colored $\alpha(z)$ in $\alpha$,
    so $K_{z,1}(\alpha, G) = \{z, u\}$. The same argument applied in $\beta$
    gives us $K_{z,1}(\beta, G) = \{z, w\}$. By performing each of these Kempe
    changes, we obtain two colorings that both color $z$ and $v$ with color 1
    and we can conclude with~\cref{prop:blackbox}.
  \item Otherwise, if $|C| = 2$, $G$ contains $v$, its $k$ neighbors and $u$ and
    $w$. The vertices $u$ and $w$ are adjacent to one another and are each
    adjacent to $k-1$ neighbors of $v$. So $S = N(u) \cap N(w)$ contains at
    least $k-2$ vertices. By assumption, there exists a vertex $z$ in $N(u) \cap
    N(v)$ that is colored $\alpha(w)$ by $\alpha$ and thus is not adjacent to
    $w$. Likewise, there exists a vertex $z'$ in $N(w) \cap N(v)$ that is
    colored $\beta(u)$ by $\beta$ and thus is not adjacent to $u$. As a result,
    $S$ contains exactly $k-2$ vertices. By assumption, all the vertices of $S$
    are locked in $\alpha$ and since both $v$ and $u$ are colored 1 in $\alpha$,
    the vertices of $S$ each have exactly one neighbor of each other color. But
    the vertices of $S$ are adjacent to $w$, so they cannot be adjacent to $z$
    which is colored alike. So $z$ has at most three neighbors : $v$, $w$ and
    $z'$, contradicting $k \ge 4$. As a result, the assumptions
    of~\cref{cl:locked} are always met when $|C| = 2$.
  \end{itemize}
\end{proof}

\end{document}